\newtheorem{theorem}{Theorem}
\newtheorem*{thmx}{Theorem}
\newtheorem{proposition}[theorem]{Proposition}
\newtheorem{corollary}[theorem]{Corollary}
\newtheorem{lemma}{Lemma}
\theoremstyle{definition}
\newtheorem{definition}{Definition}
\newtheorem{example}{Example}
\theoremstyle{remark}
\newtheorem{remark}{Remark}
\numberwithin{equation}{section}
\providecommand{\G}{\Gamma}
\providecommand{\g}{\gamma}
\providecommand{\B}{\mathcal{B}}
\providecommand{\A}{\mathcal{A}}
\providecommand{\bs}{\backslash}
\providecommand{\BR}{\mathrm{Br}}
\begin{document}

\title{Subword complexity and Sturmian colorings of regular trees}

\author{Dong Han Kim, Seonhee Lim}

\thanks{The first author is supported by KRF 2012R1A1A2004473. The second author is supported by KRF 2012-000-8829, KRF 2012-000-2388 andTJ Park Science Fellowship.}

\date{}

\begin{abstract}
In this article, we study subword complexity of colorings of regular trees. We characterize colorings of bounded subword complexity and study Sturmian colorings, which are colorings of minimal unbounded subword complexity.

We classify Sturmian colorings using their type sets. We show that any Sturmian coloring is a lifting of a coloring on a quotient graph of the tree which is a geodesic or a ray with loops possibly attached, thus a lifting of an ``infinte word". We further give a complete characterization of the quotient graph for eventually periodic ones.
\end{abstract}

\maketitle

\section{Introduction}

Let $T$ be a regular tree, i.e. a tree whose degree of vertex is constant, $VT$ be its vertex set and $G=Aut(T)$ be the group of all automorphisms of $T$. 
Let $\A$ be a countable set which will be called the alphabet.

By a coloring of the tree $T$, we mean a vertex coloring, i.e. any map $\phi : VT \to \A$. 
In this article, we define an invariant of a coloring $\phi$ called subword complexity.

One of our motivations for studying subword complexity is to give an invariant of an automorphism of a tree, relative to a discrete subgroup of $G$. 
For example, let $\Gamma$ be the group generated by $k$ generators $a_i, i=1, \cdots, k$ with relations $a_i^2=1$, and $T$ be its Cayley graph. 
To any element  $g$ of $G$ is associated a coloring $\phi_g$ (see Example~\ref{rmk1}).
The coloring $\phi_g$ is periodic if and only if $g$ is an element of the commensurator of $\Gamma$ \cite{LMZ}. 
Commensurators play an important role in studying discrete subgroups of Lie groups and automorphism groups of trees (\cite{M}, \cite{LMZ}, \cite{S}, \cite{R}, \cite{ALN}).
See Section~\ref{subsec:2.1} for details.

With this motivation in mind, we consider unrooted trees, rather than rooted binary trees which were studied in computer science (\cite{BBCF1}, \cite{BBCF2}, \cite{GG}). Theory of subword complexity and Sturmian colorings developed in this article is quite different from that for rooted binary trees \cite{BBCF1}. It seems that there is no direct relation between them.

Unrooted regular trees and their automorphism groups are important objects in geometric group theory in many aspects, as they are Cayley graphs of finitely generated groups and also 1-dimensional buildings, which is a non-archimedean analogue of rank-1 Riemannian symmetric spaces.

For an infinite sequence $u$, the subword complexity $p_u(n)$ is defined as the number of different subwords of length $n$ in $u$.
Hedlund and Morse showed that $p_u(n)$ is bounded if and only if $u$ is eventually periodic \cite{HM}. 
A sequence $u$ is called Sturmian if $p_u(n) = n+1$. (See for example \cite{L} and \cite{Fo} for details on Sturmian words.)

We define subword complexity $b_\phi(n)$ of a coloring $\phi$ as the number of non-equivalent colored $n$-balls in the tree colored by $\phi$. We show that $\phi$  is periodic if and only if its subword complexity $b_\phi(n)$ is bounded. We study Sturmian colorings using the type sets of  vertices. 

The main result of this article is that any Sturmian coloring is a lifting of a coloring of a graph $X$, which is an infinite geodesic or a geodesic ray with loops possibly attached. We further give a complete characterization of $X$ for eventually periodic Sturmian colorings:

\begin{thmx} Let $\phi$ be a Sturmian coloring of a regular tree $T$. 
\begin{enumerate}
\item
There exists a group $\Gamma$ acting on $T$ such that $\phi$ is $\G$-invariant, so that $\phi$ is a lifting of a coloring $\phi_X$ on the quotient graph $X =\Gamma \backslash T$. The quotient graph $X = G \backslash T$ is one of the following two types of graphs. Here, loops are expressed by dotted lines to indicate that they may exist or not.

\medskip


\begin{center}
\begin{tikzpicture}[every loop/.style={}]
  \tikzstyle{every node}=[inner sep=-1pt]
  \node (5) at (-1,0) {$\bullet$};
  \node (6) at  (0,0) {$\bullet$};
  \node (7) at  (1,0) {$\bullet$};
  \node (8) at  (2,0) {$\bullet$};
  \node (9) at  (3,0) {$\bullet$};
  \node (10) at (4,0) {$\bullet$};
  \node (11) at (5,0) {$\cdots$};

\tikzstyle{every loop}=   [-, shorten >=.5pt]

  \path[-] 
	(5)  edge  (6)
	(6)  edge (7)
	(7)  edge (8)
	(8)  edge (9)
	(9)  edge  (10)
	(10) edge (11);
  \path[dotted] 
		 (5) edge [loop left] (5)
		 (6) edge [loop above] (6)
		 (7) edge [loop above] (7)
		 (8) edge [loop above] (8)
		 (9) edge [loop above] (9)
		 (10) edge [loop above] (10);  
\end{tikzpicture}
\end{center}

\smallskip

\begin{center}
\begin{tikzpicture}[every loop/.style={}]
  \tikzstyle{every node}=[inner sep=-1pt]
  \node (0) at (-6,0) {$\cdots$};
  \node (1) at (-5,0) {$\bullet$} ;
  \node (2) at (-4,0) {$\bullet$} ;
  \node (3) at (-3,0) {$\bullet$};
  \node (4) at (-2,0) {$\bullet$};
  \node (5) at (-1,0) {$\bullet$};
  \node (6) at  (0,0) {$\bullet$};
  \node (7) at  (1,0) {$\bullet$};
  \node (8) at  (2,0) {$\bullet$};
  \node (9) at  (3,0) {$\bullet$};
  \node (10) at (4,0) {$\bullet$};
  \node (11) at (5,0) {$\cdots$};
  
  \tikzstyle{every loop}=   [-, shorten >=.5pt]

  \path[-] 
		 (0)  edge (1)
		 (1)  edge  (2)
		 (2)  edge (3)
		 (3)  edge  (4)
		 (4)  edge (5)
		 (5)  edge (6)
		 (6)  edge  (7)
		 (7)  edge  (8)
		 (8)  edge  (9)
		 (9)  edge  (10)
		 (10) edge (11) ;
  \path[dotted]
		 (1) edge [loop above] (1)
		 (2) edge [loop above] (2)
		 (3) edge [loop above] (3)
		 (4) edge [loop above] (4)
		 (5) edge [loop above] (5)
		 (6) edge [loop above] (6)
		 (7) edge [loop above] (7)
		 (8) edge [loop above] (8)
		 (9) edge [loop above] (9) 
		 (10) edge [loop above] (10) ; 
\end{tikzpicture}
\end{center}

\item If $\phi$ is of bounded type,
then it falls into the first case above, i.e. $\phi$ is a lifting of a coloring of a geodesic ray with loops possibly attached.
\item Moreover, $\phi$ is eventually periodic if and only if $X$ is one of the following two graphs. Here the indices on edges indicate the number of corresponding edges in $T$.

\begin{center}
\begin{tikzpicture}[every loop/.style={}]
  \tikzstyle{every node}=[inner sep=-1pt]

  \node (5) at (-1,0) {$\bullet$};
  \node (6) at  (0,0) {$\bullet$};
  \node (7) at  (1,0) {$\bullet$}; 
  \node (8) at  (2,0) {$\bullet$};
  \node (9) at  (3,0) {$\bullet$};
  \node (10) at (4,0) {$\bullet$};
  \node (11) at (5,0) {$\cdots$};

  \path[-] 
		(5) edge [loop left] 
  	        (5)  edge node [below=2pt] {$t \ \ \ 1$} (6)
		 (6)  edge node [below=2pt] {$t\ \ \ 1$}(7)
		 (7)  edge node [below=2pt] {$t\ \ \ 1$}(8)
		 (8)  edge node [below=2pt] {$t\ \ \ 1$}(9)
		 (9)  edge node [below=2pt] {$t\ \ \ 1$} (10)
		 (10) edge node [below=2pt] {$t\ \ \ $}  (11) ;
\end{tikzpicture}
\end{center}

\smallskip

\begin{center}
\begin{tikzpicture}[every loop/.style={}]
  \tikzstyle{every node}=[inner sep=-1pt]
 
  \node (5) at (-1,0) {$\bullet$};
  \node (6) at  (0,0) {$\bullet$};
  \node (7) at  (1,0) {$\bullet$}; 
  \node (8) at  (2,0) {$\bullet$};
  \node (9) at  (3,0) {$\bullet$};
  \node (10) at (4,0) {$\bullet$};
  \node (11) at (5,0) {$\cdots$};

  \path[-] 
	        (5)  edge node [below=2pt] {$t+1 \ 1$} (6)
		 (6)  edge node [below=2pt] {$t\ \ \ 1$}(7)
		 (7)  edge node [below=2pt] {$t\ \ \ 1$}(8)
		 (8)  edge node [below=2pt] {$t\ \ \ 1$}(9)
		 (9)  edge node [below=2pt] {$t\ \ \ 1$} (10)
		 (10) edge node [below=2pt] {$t\ \ \ $}  (11) ;
\end{tikzpicture}
\end{center}

\end{enumerate}
\end{thmx}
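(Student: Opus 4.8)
The plan is to obtain the group $\G$ essentially for free, to realize $X$ as a ``graph of types'', and then to read the shape of $X$ off the classification of type sets established above together with the constraint $b_\phi(n)=n+1$.

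\smallskip

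\emph{The group and the quotient.} I would take $\G = G := \{g \in \mathrm{Aut}(T) : \phi\circ g = \phi\}$, the group of color-preserving automorphisms; then $\phi$ is $\G$-invariant by construction and descends to a coloring $\phi_X$ of $X = G\bs T$, which gives the ``there exists $\G$'' and ``lifting'' assertions at once. The key point is that a $G$-orbit of vertices is exactly a \emph{type}: if $v,w$ have isomorphic colored $n$-balls for every $n$, then the sets $I_n$ of colored-ball isomorphisms $B_n(v)\to B_n(w)$ are nonempty, finite (as $T$ is locally finite) and nested under restriction, so K\"onig's lemma produces a coherent family and hence $g\in G$ with $gv=w$. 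Consequently $VX$ is the set of vertex types, an edge of $X$ is an orbit of edges of $T$ recorded by the two half-tree types on its sides, a loop at a type $[v]$ records a neighbor of $v$ of the same type, and the edge multiplicities count, at a fixed lift, how many incident edges realize a given half-tree type. So everything is reduced to determining $X$ as an edge-labelled graph.

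\smallskip

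\emph{Part (1).} Since $b_\phi$ is unbounded, $\phi$ is not periodic, hence by the characterization of periodicity above $X$ is infinite; and $X$ is connected because $T$ is. It therefore suffices to show that, after deleting all loops, $X$ is a tree in which every vertex has at most two neighbors, for then $X$ is a bi-infinite geodesic or a geodesic ray, with loops possibly attached. This is where $b_\phi(n)=n+1$ is used decisively: by the type-set classification above the branching of types is governed by the \emph{unique} special $n$-ball at each radius, and I would argue that a vertex of $X$ with three distinct neighbors, or an embedded cycle in $X$ (a pair of parallel edges included), would force two independent chains of special $n$-balls for all large $n$ and hence $b_\phi(n)\ge n+2$. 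I expect this to be the principal obstacle: one must track how the special balls propagate through $X$ and show that a thick vertex or a cycle genuinely doubles the growth rate --- which is, in effect, the content of the type-set dichotomy, so in practice this step amounts to translating each case of that dichotomy into a statement about the shape of $X$.

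\smallskip

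\emph{Parts (2) and (3).} For (2), I would show that a bi-infinite $X$ is incompatible with bounded type: a bounded-type Sturmian coloring possesses a ``smallest'' type, which can only occupy an endpoint, so $X$ is a ray; along a bi-infinite line no vertex plays that role and the types in at least one direction are forced to be unbounded. For (3), write $X = v_0 - v_1 - v_2 - \cdots$ using (1)--(2). Eventual periodicity of $\phi$ translates into eventual periodicity, along the ray, of the data consisting of $\phi_X(v_i)$ together with the loop and edge multiplicities at $v_i$; combining this with the regularity of $T$ --- at every $v_i$ the loop multiplicity plus the two edge multiplicities sum to the common degree of $T$ --- and with the sharp count $b_\phi(n)=n+1$ --- balls centered far out along the ray see only a bounded window of it, so the $n$ balls meeting the ``boundary'' near $v_0$ must supply essentially all of the extra complexity, which forces the periodic part of the ray to carry no further branching --- one is pinned to the two displayed graphs: either a multiplicity-one loop at $v_0$ with every edge of type $(t,1)$, or no loop with first edge of type $(t+1,1)$ and all later edges of type $(t,1)$. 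The converse in (3) I would settle by exhibiting, on each of the two graphs, an explicit eventually periodic $\phi_X$ whose lift has subword complexity $n+1$. The forward direction of (3) again rests on the special-ball bookkeeping from part (1).
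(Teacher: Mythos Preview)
Your framing of $\G$ as the full color-preserving automorphism group and the identification of $VX$ with the set of vertex classes via a K\"onig/compactness argument is exactly what the paper does (its Lemma~\ref{lem:2}). But from there the proposal diverges from the paper and leaves the real work undone.

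First, a slip: in this paper Sturmian means $b_\phi(n)=n+2$, not $n+1$; so your intended contradiction ``three neighbors $\Rightarrow b_\phi(n)\ge n+2$'' is vacuous as written.

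More substantively, your Part~(1) is not a proof but a statement of hope: you say yourself that showing a thick vertex or a cycle ``doubles the growth rate'' is ``the principal obstacle'' and that it ``amounts to translating each case of the type-set dichotomy''. The paper carries out precisely this translation, and it is not a single counting argument of the kind you sketch. It splits into two genuinely different arguments according to whether $\phi$ is of bounded or unbounded type. In the bounded case the key step is Proposition~\ref{maximal_type}: two vertices with the same maximal type are in the same class (this uses uniqueness of the special $\ell$-ball together with Lemma~\ref{lem:type2} and a finiteness argument); then Lemma~\ref{edge} shows that maximal types of adjacent vertices differ by at most one and that every value $m+1$ is realized next to $m$, which \emph{is} the ray structure. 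In the unbounded case the paper proves directly that a $1$-ball can carry at most three distinct type sets, by an argument about how the infinite type sets of non-equivalent vertices must eventually be disjoint and then deriving a contradiction from a carefully chosen $\ell$ in $\Lambda_x$. Neither of these is captured by ``two independent chains of special balls'', and I do not see how to make your heuristic into a rigorous complexity lower bound without essentially reproving these lemmas.

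For Part~(3) your route is also different and, as stated, incomplete. The paper's mechanism is Lemma~\ref{lem:9}: for an eventually periodic Sturmian coloring every class has only finitely many representatives in $T$. This single fact immediately kills any loop at an interior vertex and any backward edge-index $>1$, since either would produce an infinite geodesic whose vertices cycle through a bounded set of maximal types. Your proposed argument---eventual periodicity of the ray data plus the constraint that ``the $n$ balls meeting the boundary near $v_0$ must supply essentially all of the extra complexity''---does not obviously force the backward indices to equal~$1$, and you would still need something like Lemma~\ref{lem:9} to finish. The converse in (3) is indeed easy, but it is not about ``exhibiting an eventually periodic $\phi_X$'': it is the observation that when all backward indices are $1$, each class has finite preimage in $T$, so removing a large enough ball leaves pieces on which $\phi$ is already periodic.
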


It still remains to characterize the colorings $\phi_X$ on the quotient graph $X$, which we leave for future research.

\section{Subword complexity of colorings of trees}\label{section:2}

Let $T$ be a $k$-regular tree, i.e. a connected graph without loops nor terminal vertices such that the number of edges with a given initial vertex is $k$. Let $G$ be the group of automorphisms of $T$, which is a locally compact topological group with compact-open topology. Let us denote the set of vertices by $VT$ and the set of oriented edges by $ET$. We assume that $ET$ contains $\bar{e}$, which is $e$ with reversed orientation, if it contains $e$. We will denote by $[x,  y]$ the edge from vertex $x$ to vertex $y$.

One nice property of a regular tree is that the automorphisms of the tree are abundant.
An automorphism fixing a vertex $x_0$ is determined by its permutation action on the neighboring $k$ vertices of $x_0$, its action on the 2-sphere from $x_0$ which must be product of permutations on the $k-1$ vertices neighboring each vertex on the 1-sphere, etc. 

In particular, any graph morphism defined from one ball to another ball of same radius extends to an automorphism of $T$ (see \cite{B} for properties of automorphisms fixing a vertex).

\subsection{subword complexity and periodic colorings}\label{subsec:2.1}

Consider the length metric $d$ on $T$ with edge length all equal to $1$. An \textit{$n$-ball around $x$} is defined by $B_n(x) = \{ y \in VT \cup ET : d(x,y) \leq n \}$.
An \textit{$n$-sphere around $x$} is defined by $ \{ y \in VT : d(x,y) = n \}$.

Let us fix a coloring $\phi :VT \to \A$. We say that two balls $B_n(x)$ and $B_n(y)$ are \textit{equivalent} if there exists a color-preserving isomorphism from $B_n(x)$ to $B_n(y)$. By abuse of terminology, we will call such an equivalence class \textit{a colored $n$-ball} or \textit{a coloring of $n$-ball} and denote it by $[B_n(x)]$.

\begin{definition}
Let $\B_\phi(n)$ be the set of colored $n$-balls that appear in $T$ colored by $\phi$. 
The subword complexity $ b_\phi(n)$ of $\phi$ is defined by $b_\phi(n) = | \B_\phi(n)|$. We will denote $b_\phi(n)$ by $b(n)$ if there is no confusion.
\end{definition}

\begin{definition}
We say that two vertices $x$ and $y$ are in the same class if there exists a color-preserving tree automorphism sending $x$ to $y$.
\end{definition}

\begin{lemma}\label{lem:2}
If there exists a sequence $n_k \to \infty$ with $[B_{n_k}(x)] = [B_{n_k}(y)]$, then $x$ and $y$ are in the same class. 
\end{lemma}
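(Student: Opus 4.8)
The plan is to build the color-preserving automorphism sending $x$ to $y$ as a limit of a coherent sequence of finite color-preserving ball isomorphisms, using the hypothesis to get arbitrarily large such isomorphisms and a compactness/K\"onig-type argument to extract a consistent limit. The key observation, which the paper has already emphasized, is that any color-preserving isomorphism $B_n(x)\to B_n(y)$ extends to an automorphism of $T$; so it suffices to produce, for every $n$, a color-preserving isomorphism $f_n: B_n(x)\to B_n(y)$, and then to make these compatible as $n$ grows so that the union is a genuine bijection $VT\cup ET\to VT\cup ET$.

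First I would note that for each fixed $n$, the hypothesis gives us some $n_k\ge n$ with $[B_{n_k}(x)]=[B_{n_k}(y)]$, hence in particular $[B_n(x)]=[B_n(y)]$, so the set $F_n$ of color-preserving isomorphisms $B_n(x)\to B_n(y)$ is nonempty. Moreover $F_n$ is finite, since $B_n(x)$ is a finite ball and an isomorphism is determined by where it sends finitely many vertices. There is an obvious restriction map $F_{n+1}\to F_n$, $f\mapsto f|_{B_n(x)}$, since restricting a color-preserving isomorphism of $(n+1)$-balls to the $n$-subball is again one. Thus $(F_n, \text{restriction})$ is an inverse system of nonempty finite sets, and its inverse limit is nonempty (this is the standard K\"onig's lemma / compactness argument: build an infinite tree whose level-$n$ vertices are the elements of $F_n$ and an edge from $f\in F_{n+1}$ to its restriction in $F_n$; each level is nonempty and finite, so there is an infinite branch). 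Pick a thread $(f_n)_n$ in this inverse limit.

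Now define $f:VT\cup ET\to VT\cup ET$ by $f(z)=f_n(z)$ for any $n$ with $z\in B_n(x)$; this is well-defined by coherence of the $f_n$, and every $z$ lies in some $B_n(x)$ since $T$ is connected and we may take $n=d(x,z)$. The map $f$ is color-preserving because each $f_n$ is. It is a graph morphism (adjacency is detected inside a single ball) and it sends $x$ to $y$. It remains to check $f$ is bijective: injectivity is immediate since distinct points $z,z'$ both lie in a common ball $B_n(x)$ on which $f_n$ is injective; for surjectivity, observe that $f$ maps $B_n(x)$ onto $B_n(y)$ for every $n$ (as $f_n$ does), and every vertex or edge of $T$ lies in some $B_n(y)$. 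Hence $f$ is a color-preserving automorphism of $T$ with $f(x)=y$, so $x$ and $y$ are in the same class.

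The only real subtlety — and the step I would be most careful about — is the compatibility of the finite isomorphisms: a priori the isomorphisms $B_{n_k}(x)\to B_{n_k}(y)$ coming from the hypothesis for different $k$ need not restrict to one another, which is exactly why the inverse-limit (K\"onig's lemma) argument is needed rather than a naive "take the union." One should also make sure the restriction maps $F_{n+1}\to F_n$ are well-defined, i.e.\ that the center of a ball is sent to the center; this holds because a color-preserving isomorphism of balls is in particular a graph isomorphism of balls and must send the unique vertex at distance $n+1$ from everything... more simply, it carries $B_n(x)$ (the set of points within distance $n$ of $x$) to $B_n(y)$, forcing $f_{n+1}(x)$ to be the center $y$. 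Everything else is routine.
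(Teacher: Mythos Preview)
Your proof is correct and is essentially the same argument as the paper's: both obtain arbitrarily large color-preserving ball isomorphisms and then pass to a coherent limit by a compactness argument. The only difference is packaging. The paper first extends each $f_{n_k}$ to a full automorphism of $T$ (using the homogeneity of the regular tree) and then invokes weak compactness of the coset $\{f\in G:f(x)=y\}$ in the compact-open topology on $G=\mathrm{Aut}(T)$ to extract a convergent subsequence, whose limit is the desired color-preserving automorphism. You instead stay at the level of finite ball isomorphisms and apply K\"onig's lemma to the inverse system $(F_n)$ of finite nonempty sets with restriction maps. These are the same compactness phenomenon: the compactness of that coset in the compact-open topology is exactly the nonemptiness of your inverse limit. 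Your version is more self-contained and avoids any appeal to the topology on $\mathrm{Aut}(T)$; the paper's is shorter for a reader already comfortable with that framework.
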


\begin{proof}
Let us denote by $f_n$ a color-preserving isomorphism $f_n$ from $B_n(x)$ to $B_n(y)$. 
Since $T$ is a regular tree, each $f_n$ extends to an automorphism of $T$, again denoted by $f_n$, sending $x$ to $y$, although it might not preserve the color outside $B_n(x)$. 

We have a sequence of tree isomorphisms $f_{n_k}$ in the set $\{ f \in G : f(x) = y\}$, which is weakly compact. Thus there exists a subsequence of $f_{n_k}$ which converges weakly to an automorphism $f$ sending $x$ to $y$. This $f$ clearly preserves the color since it preserves the color on arbitrary large balls. Thus $x$ and $y$ are in the same class.
\end{proof}

Let us first examine colorings whose subword complexity $b(n)$ is bounded.

\begin{definition}
A coloring $\phi :VT \to \A$ is periodic if there exists a subgroup $\Gamma \subset G$ such that  $\Gamma \backslash T$ is a finite graph and $\phi$ is $\Gamma$-invariant, i.e.
$$ \phi(\g x) = \phi(x), \text{ for all } x \in VT \text{ and } \g \in \G.$$
Note that we do not require $\Gamma$ to be a discrete subgroup of $G$.
\end{definition}

\vspace{-.05 in}

Let us recall basic notions in the theory of graphs of groups which will be used throughout the paper (see \cite{Serre} and \cite{BK} for details). 

Let $\Gamma$ be a group acting on a $k$-regular tree $T$ by automorphisms. If $\Gamma$ acts without torsion, then the quotient $\Gamma \bs T$ is a $k$-regular graph, but in general, the quotient has a structure of a graph of groups, a graph version of orbifold quotient.  

Let $X$ be a graph, $VX$ its vertex set and $EX$ the set of oriented edges of $X$ ($EX$ contains $e, \bar{e}$ for each unoriented edge in $X$).

A \textit{graph of groups} $(X, G_\bullet )$ is a graph $X$ equipped with a group $G_x$ for each $x \in VX \cup EX$ and an injective homomorphism $j_e : G_e \to G_{\partial_0(e)}$ from the edge group to the group of the initial vertex $\partial_0(e)$ of the edge $e$, for each oriented edge $e\in EX$.

A graph of groups associated to a group $\Gamma$ acting on $T$, which will be denoted by $\Gamma \bs \bs T$, is defined as follows. 
First we assume that $\Gamma$ acts without inversions by taking the first barycentric subdivision of $T$ if necessary.
Take the quotient graph $\Gamma \bs T$ as the underlying graph $X$. 
Choose a connected fundamental domain $D$ of $X$ in $T$, for which the natural projection $D \to X$ is a bijection. Let $\widetilde{x}$ be the lift of $x$ in $D$.

For $x \in VX \cup EX$, set $G_x = Stab_\Gamma (\widetilde x)$. 
If $\partial_0(\widetilde{e})=\widetilde{\partial_0(e)}$, then $G_e \subset G_{\partial_0(e)}$, since an automorphism fixing an edge fixes its initial vertex and terminal vertex. 
Thus we take the inclusion $\iota_e$ for the injective homomorphism $j_e$. 
If $\partial_0(\widetilde{e})\neq \widetilde{\partial_0(e)}$, then there exists an element $\gamma \in \Gamma$ which sends $\partial_0(\widetilde{e})$ to $\widetilde{\partial_0(e)}$ (since their projections are both $\partial_0(e)$ in $\Gamma \bs T$). 
In this case, we take the inclusion composed with conjugation by $\gamma$ as the injective homomorphism $j_e = \gamma \circ \iota_e \circ \gamma^{-1}$.

Let $i : EX \to \mathbb N$ be a map. We call $(X,i)$ an \textit{edge-indexed graph of $T$} if the universal cover of $(X,i)$ is isomorphic to $T$. The universal covering tree of $(X,i)$ is constructed as follows \cite{Bass}. 

Start with a vertex $v_0$ in $X$. For each edge $e$ with initial vertex $v_0$ and index $i(e)$, draw $i(e)$ edges $e_j, j = 1, \cdots, i(e)$ with initial vertex $v_0$, which are liftings of the edge $e$. 
For each terminal vertex $\partial_1(e_j)$ of $e_j$, and for each edge $f$ in $X$ with initial vertex $\partial_0(f) = \partial_1(e)$, draw again $i(f)$ edges $f_j, j =1, \cdots, i(f)$, which are liftings of $f$. Repeat this process to obtain a locally finite tree.

Thus for $k$-regular tree $T$, $(X,i)$ is an edge-index graph of $T$ if and only if for every vertex $x$, the sum of indices $i(e)$ of edges $e$ with initial vertex $x$ equals $k$.

The edge-indexed graph of a graph of groups $(X, G_\bullet)$ is an edge-indexed graph whose graph is the underlying graph $X$ and for which $i(e)$ is the index of $G_e$ in $G_{\partial_0(e)}$. The universal cover of a graph of groups is isomorphic to the universal cover of its edge-indexed graph \cite{Bass}.

Combining these facts, if a coloring $\phi$ is $\Gamma$-invariant, then $\phi$ is determined by a coloring on the edge-indexed graph of $\Gamma \bs \bs T$.
We conclude that $\phi$ is periodic if and only if it is a lift of a coloring on an edge-indexed finite graph.

We will often express our coloring on an edge-indexed graph, of which the underlying graph is infinite unless $\phi$ is periodic.

%
%


The following proposition is an analogue of the classical theorem of Hedlund and Morse \cite{HM}.
\begin{theorem}\label{thm:periodic} 
Let $\phi : VT \to \A$ be a coloring. The followings are equivalent.
\begin{enumerate}
\item The coloring $\phi$ is periodic.
\item The subword complexity of $\phi$ satisfies $b_\phi(n+1) = b_\phi(n)$ for some $n>0$.
\item The subword complexity $b_\phi(n)$ is bounded.
\end{enumerate}
\end{theorem}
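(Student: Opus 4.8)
The plan is to prove the cyclic chain of implications $(1) \Rightarrow (3) \Rightarrow (2) \Rightarrow (1)$, since $(3) \Rightarrow (2)$ is immediate (a nondecreasing $\mathbb{N}$-valued function that is bounded must stabilize, because in fact $b_\phi$ is nondecreasing: every colored $(n+1)$-ball restricts to a colored $n$-ball, so $b_\phi(n+1) \ge b_\phi(n)$, and an injective-on-stabilization argument is not even needed here — boundedness of an eventually-stable... more precisely, if $b_\phi$ is bounded then since it is nondecreasing it is eventually constant, giving some $n$ with $b_\phi(n+1)=b_\phi(n)$). For $(1) \Rightarrow (3)$: if $\phi$ is $\Gamma$-invariant with $\Gamma \backslash T$ finite, then two vertices in the same $\Gamma$-orbit have color-preserving automorphisms between them matching all balls, so $b_\phi(n) \le |\Gamma \backslash VT| < \infty$ for every $n$, uniformly.

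The substantive implication is $(2) \Rightarrow (1)$. Suppose $b_\phi(n+1) = b_\phi(n)$ for some $n$. First I would show this forces $b_\phi(m) = b_\phi(n)$ for all $m \ge n$: the restriction map $\B_\phi(m+1) \to \B_\phi(m)$ is always surjective, and I claim that once it is bijective at level $n$ it is bijective at every later level. The key point is that a colored $(n+1)$-ball $[B_{n+1}(x)]$ is determined by the colored $n$-ball $[B_n(x)]$ together with the colored $n$-balls $[B_n(y)]$ for $y$ on the $(n{-}1)$-sphere... — more carefully, by the colored $n$-balls centered at the neighbors of the "boundary" vertices, and if every colored $n$-ball has a \emph{unique} extension to a colored $(n+1)$-ball, one propagates this: an $(n{+}2)$-ball is reconstructed from overlapping $(n{+}1)$-balls each of which is forced. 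So the extension of each colored $n$-ball to an arbitrarily large ball is unique, hence (by Lemma \ref{lem:2}, applied with $n_k \to \infty$) any two vertices $x,y$ with $[B_n(x)] = [B_n(y)]$ lie in the same class, i.e. there is a color-preserving automorphism sending $x$ to $y$.

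From here I would build the group $\Gamma$. Let $\Gamma$ be the group of all color-preserving automorphisms of $T$ (the full group $\mathrm{Aut}(T,\phi)$). By the previous paragraph, the $\Gamma$-orbit of a vertex $x$ is exactly its class, and the class is determined by the colored $n$-ball $[B_n(x)]$; since $b_\phi(n)$ is finite, there are only finitely many vertex-orbits. One must also check finitely many edge-orbits, which follows since an edge $[x,y]$ inside $B_n(z)$ for a suitable center is determined up to the $\Gamma$-action by the colored $(n{+}1)$-ball data around it, again a finite set. Hence $\Gamma \backslash T$ is a finite graph and $\phi$ is by construction $\Gamma$-invariant, so $\phi$ is periodic.

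The main obstacle I anticipate is the propagation step in $(2) \Rightarrow (1)$: making rigorous that "unique extension from level $n$ to level $n{+}1$" bootstraps to "unique extension from level $n$ to all levels." The care needed is that a colored $(n{+}k)$-ball is not literally glued from $n$-balls but from $(n{+}1)$-balls, so one argues by induction on $k$, using at each stage that the map $\B_\phi(n{+}k{+}1) \to \B_\phi(n{+}k)$ is a bijection — which itself needs the inductive hypothesis that $b_\phi$ has stabilized through level $n{+}k$, together with the local reconstruction principle for balls in a regular tree (every $(m{+}1)$-ball is determined by the $m$-ball and the colors/structure on the $(m{+}1)$-sphere, and regularity lets us realize any consistent such configuration). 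Once this bookkeeping is set up the rest is routine.
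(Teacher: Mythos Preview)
Your proposal is correct and follows essentially the same route as the paper: the cycle $(1)\Rightarrow(3)\Rightarrow(2)\Rightarrow(1)$, with $(2)\Rightarrow(1)$ handled via the unique-extension propagation ``$[B_n(x)]=[B_n(y)]\Rightarrow[B_m(x)]=[B_m(y)]$ for all $m\ge n$'', an appeal to Lemma~\ref{lem:2}, and taking $\Gamma$ to be the full group of color-preserving automorphisms. The only cosmetic difference is that the paper packages the propagation by building an explicit finite edge-indexed graph $(X,i)$ on the vertex set $\B_\phi(n)$ and identifying $(T,\phi)$ with the lift from its universal cover (a description reused later in the paper), whereas you run the induction on the radius directly; the content is the same.
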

\begin{proof}
(1) implies (3)  :  Suppose $\phi$ is periodic i.e. $\Gamma$-invariant for some cocompact subgroup $\Gamma$ of $G$. Let $VX =V( \Gamma \bs \bs T)$ be the vertex set of the quotient graph of groups $\Gamma \bs \bs T$ colored by $\phi$. Then any coloring of $n$-ball in $T$ is determined by the class of its center in $VX$. Thus $b(n) \leq |VX|$. 

\noindent
(3) implies (2) : Since $b_\phi(n)$ is non-decreasing, boundedness of $b_\phi(n)$ implies that $b_\phi(n+1) = b_\phi(n)$ for some $n$.

\noindent
(2) implies (1) :  Suppose $b_\phi(n+1) =b_\phi(n)$. Let us construct an edge-indexed graph $X$ as follows. 

The vertices $VX$ are elements of $\B_\phi(n)$. 
The directed edges $EX$ are all pairs $[[B_n(x)],[B_n(y)]]$ with $d(x,y)=1$.
For a given vertex in $VX$, let us choose a representative $B_n(x)$. 
Condition (2) implies that $[B_n(x)]$ has a unique extension to an $(n+1)$-ball $[B_{n+1}(x)]$. In particular, it implies that for a vertex $y$ of distance 1 from $x$, the color of $B_n(y)$ is uniquely determined up to isomorphism by $[B_n(x)]$. 
Thus the edges defined above are well-defined (i.e. it is independent of the representative of  $[B_n(x)]$).

For any given edge $ e\in EX$, let us choose a representative $[B_n(x), B_n(y)]$ with $d(x,y)=1$.  
Put on each oriented edge $e$ an index $i(e)$ which is the number of vertices $y' \in VT$ with $d(x, y')=1$ and $[B_n (y)]= [B_n(y')]$. This number is independent of the representative of the vertex by the previous paragraph. Thus the edge-indexed graph is well-defined, its underlying graph is finite, and its universal cover is a regular tree. The coloring $\phi$ is clearly determined by its values on $(X,i)$.

Let $\Gamma$ be the group of color-preserving automorphisms of $T$. By the assumption of (2), $[B_n(x)]=[B_n(y)]$ implies that $[B_m(x)]=[B_m(y)]$ for every $m$, thus there is an automorphism of $T$ sending $x$ to $y$ by Lemma~\ref{lem:2}. Conversely, if $[B_n(x)] \neq [B_n(y)]$, then there is no element $\gamma \in \Gamma$ sending $x$ to $y$. Thus there is a bijection between $\Gamma \bs VT$ and $VX$, say $\Psi : \Gamma \bs VT \to VX$, so that the following diagram commutes.

\begin{center}
\begin{tikzpicture}[scale=1.5]
\node (A) at (1,1) {$T$};
\node (B) at (0,0) {$\Gamma \backslash T$};
\node (C) at (2,0) {$X$};
\path[->,font=\scriptsize]
(A) edge (B)
(A) edge (C)
(B) edge (C);
\end{tikzpicture}
\end{center}


Therefore, edges in $\Gamma \bs T$ correspond to edges in $X$. It remains to show that if there is an edge of index $j$ in $X$, i.e. if there exists $x$ and $y_1, \cdots, y_{l_1}$ of the same class in the 1-neighborhood of $x$ , then $[\Gamma_x : \Gamma_e] =l_1$.
It comes from the property of a regular tree that the automorphism group is very large \cite{B}. Let $e$ be an edge with initial vertex $x$ and terminal vertex $v_{11}$. Let $v_{12}, \cdots, v_{1l_1}$ be vertices in the same class as $v_{11}$.
Let the 1-neighborhood of $x$ be partitioned into $j$ sets $\{v_{11}, \cdots, v_{1l_1}\}, \{ v_{21}, \cdots v_{2l_2}\}, \cdots, \{v_{j1}, \cdots, v_{jl_j} \}$  of vertices of the same type. Then 
$$ [\Gamma_x : \Gamma_e] = \frac{ l_1! \cdots l_j!}{(l_1-1)! l_2! \cdots l_j!} = l_1.$$

Thus we conclude that the edge-indexed graph of $\Gamma \bs \bs T$ is isomorphic to $(X,i)$,
and $\phi$ is $\Gamma$-invariant. Thus $\phi$ is periodic.
\end{proof}

\begin{example}\label{rmk1} Let $\Gamma = \langle a_1, \cdots, a_k: a_i^2=1 \rangle$ and $T$ its Cayley graph. Then to any element $g$ of $Aut(T)$ is associated a vertex coloring of $T$ as follows. 

For every vertex $t$ in $T$, there exists a unique element $\g_t$ of $\Gamma$ sending the identity to $t$. Then the element $ \g_{g(t)}^{-1} \circ g \circ \g_t$ sends the identity element back to itself, thus it is a stabilizer of the identity. Let $\phi_g(t)$ be the map $ \g_{g(t)}^{-1} \circ g \circ \g_t$ restricted to the 1-sphere of the identity. We obtain an element of $S_k$, where $S_k$ is the symmetric group on the set of vertices of $1$-sphere of identity. Therefore, we may consider $\phi_g:VT \to S_k$ as a coloring with $\A = S_k$. 

Lubotzky, Mozes and Zimmer showed that $\phi_g$ is a periodic coloring if and only if $g$ is an element of the commensurator group of $\Gamma$ \cite{LMZ}. 

More generally, if $T$ is a locally finite tree, $G=Aut(T)$ is its automorphism group, and $\G$ is a cocompact discrete subgroup of $G$, then to any automorphism is associated a coloring $\phi_g : T \to Y=\Gamma \backslash T$, which is a covering map. An automorphism $g$ is in the commensurator group of $\G$ if and only if its associated coloring $\phi_g$ is periodic \cite{ALN}.
\end{example}

\begin{corollary}\label{coro:comm} With $T$ and $\Gamma$ as in Example~\ref{rmk1},
an automorphism $g$ of $T$ is contained in the commensurator subgroup of $\Gamma$ if and only if its subword complexity $b_{\phi_g}(n)$ is bounded.
\end{corollary}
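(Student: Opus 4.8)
The plan is to obtain the corollary as an immediate consequence of Theorem~\ref{thm:periodic} combined with the theorem of Lubotzky, Mozes and Zimmer recalled in Example~\ref{rmk1}. First I would record that the Cayley graph $T$ of $\Gamma = \langle a_1,\dots,a_k : a_i^2 = 1\rangle$ with respect to the generating set $\{a_1,\dots,a_k\}$ is precisely the $k$-regular tree, so that Theorem~\ref{thm:periodic} applies verbatim to the coloring $\phi_g : VT \to S_k$ constructed in Example~\ref{rmk1}, with alphabet $\A = S_k$.

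The argument is then a two-step chain of equivalences. By \cite{LMZ} (see Example~\ref{rmk1}), the automorphism $g$ lies in the commensurator subgroup of $\Gamma$ if and only if the associated coloring $\phi_g$ is periodic in the sense of this paper, i.e.\ invariant under some subgroup $\Gamma' \le G$ with finite quotient $\Gamma' \backslash T$ (note that discreteness of $\Gamma'$ is not required, which is consistent both with the statement of \cite{LMZ} and with the group of color-preserving automorphisms used in the proof of Theorem~\ref{thm:periodic}). By the equivalence of (1) and (3) in Theorem~\ref{thm:periodic}, $\phi_g$ is periodic if and only if its subword complexity $b_{\phi_g}(n)$ is bounded. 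Concatenating the two equivalences yields exactly the assertion: $g \in \mathrm{Comm}(\Gamma)$ iff $b_{\phi_g}(n)$ is bounded.

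I expect essentially no obstacle here: the mathematical content is entirely carried by the two cited results, and what remains is purely a matter of checking that the notion of ``periodic coloring'' is the same on both sides — on one hand the cocompact group supplied by \cite{LMZ}, on the other a cocompact group of color-preserving automorphisms — which it is. The only point I would consider spelling out, if self-containment were desired, is the direction ``$\phi_g$ periodic $\Rightarrow g \in \mathrm{Comm}(\Gamma)$'', since that is where the commensurability relation must be extracted from cocompactness; but this is exactly \cite{LMZ}, so I would simply invoke it rather than reprove it.
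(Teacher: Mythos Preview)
Your proposal is correct and follows exactly the route the paper intends: the corollary is stated without proof because it is an immediate consequence of combining Theorem~\ref{thm:periodic} (equivalence of periodicity and bounded subword complexity) with the Lubotzky--Mozes--Zimmer result recalled in Example~\ref{rmk1}. There is nothing to add.
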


\subsection{Type set of vertices, colorings of bounded type and eventually periodic colorings}

For the rest of this article, we will study colorings of unbounded subword complexity. For such colorings, special balls play an important role. Before defining special balls, let us provide a basic lemma about graph isomorphisms of balls and branches of a regular tree.

Let $x_i$, $y_i$, $i=1, \cdots k $ be the neighboring vertices of $x$, $y$, respectively. 
Let $\BR_{n} (x,x_i)$ be the \textit{$n$-branch from $x$ to $x_i$} which is defined as the subtree of $B_n(x)$ with vertex set $\{ x\} \cup \{ y \in B_{n}(x)\; | \; d(y,x_i) < d(y,x) \}$ and the edge set given by the set of all edges with initial and terminal vertices in the vertex set, so that 
$$ B_n(x) = \bigcup_{i=1}^k \BR_{n}(x,x_i).$$
Note also that $\BR_{n+1}(x_i, x) = \underset{j\neq i}{\cup} \BR_{n}(x,x_j) \cup [x_i, x]$.

If $f$ is an isomorphism from $B_n(x)$ to $B_n(y)$, then
$f(x)= f(y)$ and $f (\BR_{n} (x,x_i)) = \BR_{n} (y,y_{\sigma(i)})$ for all $i$ for some permutation $\sigma$ on $k$ letters.
Thus we obtain the following lemma.
\begin{lemma}\label{lem:1}
Let us denote the equivalence class up to color-preserving graph isomorphisms of $\BR_n(x,x')$ fixing $x$ by $[\BR_n(x,x')]$ as we did for balls.

\begin{enumerate}
\item If $[B_n(x)]=[B_n(y)]$, then $[\BR_n(x,x_i)]=[\BR_n(y, y_{j})]$ and $[\BR_{n+1}(x_i, x)] = [\BR_{n+1}(y_{j},y)]$ for some $j$.
\item If $[\BR_n(x,x_i)] = [\BR_n(y,y_{\sigma(i)})]$ for all $i=1, \cdots k$ and for some permutation $\sigma \in S_{k}$, then $[B_n(x)]=[B_n(y)]$. 
\item If either $[\BR_n(x,x_i)] \neq [\BR_n(y, y_{j})]$ and $[\BR_{n+1}(x_i, x)] = [\BR_{n+1}(y_{j},y)]$ or $[\BR_n(x,x_i)]=[\BR_n(y, y_{j})]$ and $[\BR_{n+1}(x_i, x)] \neq [\BR_{n+1}(y_{j},y)]$ for some $j$, then $[B_n(x)]\neq [B_n(y)].$ 
\end{enumerate}
\end{lemma}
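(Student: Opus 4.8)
The plan is to reduce all three parts to a single combinatorial observation. Since $B_n(x)=\bigcup_{i=1}^k \BR_n(x,x_i)$ with the branches meeting pairwise only at the center $x$, the class $[B_n(x)]$ is encoded precisely by the \emph{multiset} $M_x:=\{[\BR_n(x,x_1)],\dots,[\BR_n(x,x_k)]\}$ of branch classes; and from the identity $\BR_{n+1}(x_i,x)=[x_i,x]\cup\bigcup_{l\ne i}\BR_n(x,x_l)$ recalled before the lemma, the class $[\BR_{n+1}(x_i,x)]$ is encoded precisely by $M_x$ with one copy of $[\BR_n(x,x_i)]$ removed. I would first record that both decompositions live \emph{inside} $B_n(x)$: the branches $\BR_n(x,x_i)$ obviously do, and if $z\ne x_i$ is a vertex of $\BR_{n+1}(x_i,x)$ then $d(z,x_i)\le n+1$ and $d(z,x)<d(z,x_i)$ force $d(z,x)\le n$, so $\BR_{n+1}(x_i,x)\subseteq B_n(x)$ as a subtree. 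This containment is what allows one to restrict a given color-preserving isomorphism of $n$-balls to a complementary $(n+1)$-branch without worrying about the coloring outside $B_n(x)$.

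For (2), given color-preserving isomorphisms $g_i\colon \BR_n(x,x_i)\to \BR_n(y,y_{\sigma(i)})$ with $g_i(x)=y$ (which is what $[\BR_n(x,x_i)]=[\BR_n(y,y_{\sigma(i)})]$ means), I would glue the $g_i$ along the common vertex $x\mapsto y$; since the branches overlap only in $\{x\}$ this produces a well-defined color-preserving graph isomorphism $B_n(x)\to B_n(y)$, hence $[B_n(x)]=[B_n(y)]$. For (1), I would invoke the observation recorded just before the lemma that any color-preserving isomorphism $f\colon B_n(x)\to B_n(y)$ satisfies $f(x)=y$ and $f(\BR_n(x,x_i))=\BR_n(y,y_{\sigma(i)})$ for some $\sigma\in S_k$; taking $j=\sigma(i)$, the restriction of $f$ to $\BR_n(x,x_i)$ witnesses $[\BR_n(x,x_i)]=[\BR_n(y,y_j)]$, and the restriction of $f$ to $\BR_{n+1}(x_i,x)$ (legitimate by the containment above) carries $[x_i,x]\cup\bigcup_{l\ne i}\BR_n(x,x_l)$ onto $[y_j,y]\cup\bigcup_{m\ne j}\BR_n(y,y_m)$, witnessing $[\BR_{n+1}(x_i,x)]=[\BR_{n+1}(y_j,y)]$ with the \emph{same} index $j$.

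For (3) I would argue the contrapositive. Assume $[B_n(x)]=[B_n(y)]$, so that $M_x=M_y=:M$ as multisets; fix $i$ and an arbitrary $j$, and write $a=[\BR_n(x,x_i)]$, $b=[\BR_n(y,y_j)]$. Using the encoding above — an isomorphism of $(n+1)$-branches based at $x_i$ must send the unique edge $[x_i,x]$ to $[y_j,y]$, hence $x$ to $y$, and so reduces to a matching of the remaining $n$-branches at $x$ with those at $y$ — one gets $[\BR_{n+1}(x_i,x)]=[\BR_{n+1}(y_j,y)]$ if and only if $M\setminus\{a\}=M\setminus\{b\}$ as multisets. If $a=b$, both equalities in (3) hold; if $a\ne b$, then $M\setminus\{a\}\ne M\setminus\{b\}$, since removing one copy of $a$ lowers the multiplicity of $a$ by one while removing one copy of $b\neq a$ leaves it unchanged, so both equalities in (3) fail. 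In either case it is never true that exactly one of the two equalities holds, which is exactly the contrapositive of (3). The one step I would be most careful about is precisely this last encoding claim for $\BR_{n+1}(x_i,x)$ — that an isomorphism of the complementary $(n+1)$-branches is forced to fix the distinguished edge back toward $x$ and thus descends to a permutation-matching of the $n$-branches at $x$ — but this is just the branch-permutation principle already used for balls, applied one vertex further out.
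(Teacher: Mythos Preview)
Your argument is correct and rests on exactly the branch-permutation principle the paper records immediately before the lemma; the paper itself offers no further proof beyond ``Thus we obtain the following lemma.'' One small imprecision worth noting: the class $[\BR_{n+1}(x_i,x)]$ is encoded by the multiset $M_x\setminus\{[\BR_n(x,x_i)]\}$ \emph{together with} the color $\phi(x_i)$ of its root, not by the multiset alone --- but this does not damage your proof of (3), since in the case $a=b$ (with $n\ge 1$) the equality $[\BR_n(x,x_i)]=[\BR_n(y,y_j)]$ already forces $\phi(x_i)=\phi(y_j)$ (as $x_i$ is the unique neighbor of the root in that branch), and in the case $a\ne b$ you only invoke the ``only if'' direction of the encoding.
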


\begin{definition}
A colored $n$-ball $[B]$ is \textit{special} if there are two distinct colored $(n+1)$-balls $[B_{n+1}(x)]$ and $[B_{n+1}(y)]$ such that $[B_{n}(x)] = [B_{n}(y)] = [B]$.
\end{definition}
If a colored $n$-ball $[B]$ is not special, then it has a unique extension to $(n+1)$-ball in the sense that there is a unique colored $(n+1)$-ball $[B_{n+1}(y)]$ such that $[B_n(y)]=[B]$.

\begin{lemma}\label{new}
If $[B_n(x)] = [B_n(y)]$ and $[B_{n+1}(x)] \neq [B_{n+1}(y)]$, then
for each $1 \le m \le n$ there exist $x'$ from $m$-sphere of $x$ and $y'$ from $m$-sphere of $y$ such that 
$$[B_{n-m+1}(x')] \neq [B_{n-m+1}(y')], \quad [B_{n-m}(x')] = [B_{n-m}(y')]. $$
Consequently, any special $n$-ball contains a special $l$-ball, $\forall l < n$.
\end{lemma}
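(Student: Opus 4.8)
The plan is to prove the displayed statement by induction on $m$, building along the way two geodesic rays $x=v_0,v_1,v_2,\dots$ and $y=w_0,w_1,w_2,\dots$ (so that $d(v_i,x)=d(w_i,y)=i$), with $x'=v_m$, $y'=w_m$. Everything is phrased through branches and Lemma~\ref{lem:1}, using two bookkeeping facts repeatedly: first, $[B_r(v)]=[B_r(w)]$ if and only if the multisets $\{[\BR_r(v,v_j)]\}_j$ and $\{[\BR_r(w,w_j)]\}_j$ of $r$-branches agree (Lemma~\ref{lem:1}(1),(2)); second, re-rooting at the far endpoint, $\BR_{r+1}(u,v)$ with $u\sim v$ is the edge $[v,u]$ together with the forward branches $\BR_r(v,v_j)$ over the neighbours $v_j\neq u$ of $v$, so that $[\BR_{r+1}(u,v)]=[\BR_{r+1}(u',v')]$ iff $\phi$ agrees on the two endpoints and the multisets $\{[\BR_r(v,v_j)]:v_j\neq u\}$ and $\{[\BR_r(v',v'_j)]:v'_j\neq u'\}$ agree. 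Also, truncating the radius preserves equality of branch classes.

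The engine is the following \emph{selection step}: if $u\sim v$ and $u'\sim v'$ satisfy $[\BR_{p+1}(u,v)]=[\BR_{p+1}(u',v')]$ and $[\BR_{p+2}(u,v)]\neq[\BR_{p+2}(u',v')]$, then there are forward neighbours $a\neq u$ of $v$ and $a'\neq u'$ of $v'$ with $[\BR_p(v,a)]=[\BR_p(v',a')]$ but $[\BR_{p+1}(v,a)]\neq[\BR_{p+1}(v',a')]$. Indeed, by the re-rooting description the hypotheses say that the forward $p$-branch multisets of $v,v'$ agree while the forward $(p+1)$-branch multisets do not; grouping the forward directions by the value of the $p$-branch (the two groupings having equal sizes) and a pigeonhole then force such a pair, for otherwise over each $p$-branch class all $(p+1)$-branches would coincide and the $(p+1)$-multisets would agree. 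I carry, for $i\ge1$, the invariant $Q_i$: the sequences are geodesic so far ($v_i\sim v_{i-1}$, $w_i\sim w_{i-1}$, and $v_i\neq v_{i-2}$, $w_i\neq w_{i-2}$ when defined), together with
\begin{gather*}
[\BR_{n-i+1}(v_{i-1},v_i)]=[\BR_{n-i+1}(w_{i-1},w_i)],\qquad
[\BR_{n-i+2}(v_{i-1},v_i)]\neq[\BR_{n-i+2}(w_{i-1},w_i)],\\
[\BR_{n-i+1}(v_i,v_{i-1})]=[\BR_{n-i+1}(w_i,w_{i-1})].
\end{gather*}
For $i=1$ this comes straight from the hypotheses: $[B_n(x)]=[B_n(y)]$ and $[B_{n+1}(x)]\neq[B_{n+1}(y)]$ say, by Lemma~\ref{lem:1}, that the full $n$-branch multisets of $x,y$ agree while the full $(n+1)$-branch multisets do not, so the same pigeonhole gives $v_1\sim x$, $w_1\sim y$ with $[\BR_n(x,v_1)]=[\BR_n(y,w_1)]$ and $[\BR_{n+1}(x,v_1)]\neq[\BR_{n+1}(y,w_1)]$; the third clause of $Q_1$ follows by removing the matching branch towards $v_1$ from the equal full $n$-branch multisets and truncating.

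For the inductive step, apply the selection step to the first two clauses of $Q_i$ (with $p=n-i$, $u=v_{i-1}$, $u'=w_{i-1}$); the neighbours $a\neq v_{i-1}$, $a'\neq w_{i-1}$ it produces become $v_{i+1},w_{i+1}$, which keep the sequences geodesic and satisfy the first two clauses of $Q_{i+1}$. The third clause of $Q_{i+1}$, namely $[\BR_{n-i}(v_{i+1},v_i)]=[\BR_{n-i}(w_{i+1},w_i)]$, is obtained by re-rooting this branch at $v_i$: what is needed is that the $(n-i-1)$-branch multiset of $v_i$ over the neighbours $\neq v_{i+1}$ matches that of $w_i$ over the neighbours $\neq w_{i+1}$; its summand towards $v_{i-1}$ matches by truncating the third clause of $Q_i$, and the remaining summands match because re-rooting the first clause of $Q_i$ gives equal forward $(n-i)$-branch multisets of $v_i,w_i$, from which one removes the (already matched) branch towards $v_{i+1}$ and truncates.

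Finally $Q_i$ yields the ball statement with $m=i$: re-rooting its first clause shows the forward $(n-i)$-branches of $v_i,w_i$ match as a multiset, and truncating its third clause shows the branch towards $v_{i-1}$ matches, so all $(n-i)$-branches agree and $[B_{n-i}(v_i)]=[B_{n-i}(w_i)]$; for the strict inequality, re-rooting the second clause shows the forward $(n-i+1)$-branch multisets \emph{disagree} while the third clause says the $(n-i+1)$-branch towards $v_{i-1}$ matches, and since adjoining equal elements to unequal multisets keeps them unequal, $[B_{n-i+1}(v_i)]\neq[B_{n-i+1}(w_i)]$. As the $v_i$'s and $w_i$'s form geodesics, $d(v_m,x)=d(w_m,y)=m$. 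The consequence is then immediate: if $[B_n(x)]=[B_n(y)]$, $[B_{n+1}(x)]\neq[B_{n+1}(y)]$ exhibit a special $n$-ball and $l<n$, applying the above with $m=n-l$ produces $x',y'$ with $d(x',x)=n-l$, $[B_l(x')]=[B_l(y')]$ and $[B_{l+1}(x')]\neq[B_{l+1}(y')]$, so $[B_l(x')]$ is a special $l$-ball lying inside $B_n(x)$. The point requiring care — and the reason for tracking three branch conditions rather than merely following one ``bad'' vertex outward along a geodesic — is the propagation of the strict inequality: $[B_{r+1}(v_i)]\neq[B_{r+1}(w_i)]$ only says some ball isomorphism fails, not that a prescribed one-smaller sub-ball one step out is still non-equivalent, so one must retain precisely enough forward-and-backward branch data at the right radii for the multiset-cancellation argument to close.
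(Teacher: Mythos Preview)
Your proof is correct and follows essentially the same strategy as the paper: build a geodesic $x=v_0,v_1,\dots$ (and correspondingly $y=w_0,w_1,\dots$) by induction on $m$, propagating branch equality/inequality conditions via Lemma~\ref{lem:1}, and then read off the ball statements from these branch data. The one noteworthy bookkeeping difference is that the paper fixes a single colour-preserving isomorphism $f:B_n(x)\to B_n(y)$ at the outset and sets $w_m=f(v_m)$; this makes the ``backward'' equalities (your third clause of $Q_i$) automatic, since the relevant branches sit inside $B_n(x)$ and are carried over by $f$, leaving only the forward inequality to chase via Lemma~\ref{lem:1}(3). You instead work purely with multisets of branch classes and a pigeonhole ``selection step'', which requires you to maintain the backward equality explicitly as part of the invariant. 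Both devices close the induction; your version is a bit more verbose but also more self-contained, since it does not lean on the fixed $f$.
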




\begin{proof}
Let $f$ be a color-preserving isomorphism from $B_n(x)$ to $B_n(y)$.
Let $y_i$ be the image of $x_i$ under $f$, for $i=1, \cdots, k$.

By Lemma~\ref{lem:1} (1), $[\BR_{n}(x, x_i)] = [\BR_{n}(y, y_i)]$ and $[\BR_{n+1}(x_i, x)] = [\BR_{n+1}(y_i, y)]$ for all $i$.
Since $[B_{n+1}(x) ] \neq [B_{n+1}(y)]$, it follows that $[\BR_{n+1}(x,x_i)] \neq [\BR_{n+1}(y, y_i)]$ for some $i$ by Lemma~\ref{lem:1} (2). Since $[\BR_n(x_i, x)]=[\BR_n(y_i,y)]$, we get $[B_n(x_i)] \neq [B_n(y_i)]$ by Lemma~\ref{lem:1} (3). We have $[B_{n-1}(x_i)] = [B_{n-1}(y_i)]$ since $[B_n(x)] = [B_n(y)]$. This complete the proof for $m=1$.

Inductively, let $(x_i)_j$ be the vertices neighboring $x_i$ and let $(y_i)_j= f((x_i)_j)$. Since $[x] = [y]$, $[x_i] = [y_i]$ and $[\BR_{n+1}(x,x_i)] \neq [\BR_{n+1}(y, y_i)]$, we have $[\BR_{n}(x_i, (x_i)_j)] \neq [\BR_{n}(y_i, (y_i)_j)]$ for some vertex $(x_i)_j$ neighboring $x_i$ other than $x$.

Note that $\BR_{n-1}((x_i)_j, x_i) \subset B_{n-1}(x)$, thus $[\BR_{n-1}((x_i)_j, x_i)]=[\BR_{n-1}((y_i)_j, y_i)]$.  By Lemma~\ref{lem:1} (3), we get $[B_{n-1} ((x_i)_j)] \neq [B_{n-1} ((y_i)_j)]$. We have $[B_{n-2} ((x_i)_j)] = [f(B_{n-2} ((x_i)_j)]= [B_{n-2} ((y_i)_j)]$.
We repeat this procedure until $m = n$.
\end{proof}

\begin{definition}
The \textit{type set} $\Lambda_x$ of a vertex $x \in VT$ is the set of nonnegative integers $n$ for which $[B_n(x)]$ is special.

A vertex $x$ is said to be \textit{of bounded type} if $\Lambda(x)$ is a finite set. For a vertex $x$ of bounded type, let us denote by $\tau(x)$ the maximum of elements in $\Lambda(x)$ and call it \textit{the maximal type of $x$}. If $\Lambda(x)$ is empty, set $\tau(x) =-1$.
\end{definition}
We will often use the following lemma.
\begin{lemma}\label{lem:basic} 
Let $x$ be a vertex of bounded type.
 We have $\tau(x) \leq m$ if and only if $[B_{m+1}(x)] = [B_{m+1}(y)]$ implies that $x$ and $y$ are in the same class.
\end{lemma}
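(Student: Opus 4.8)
The statement relates the maximal type $\tau(x)$ of a bounded-type vertex $x$ to a "finite determination" property: knowing $[B_{m+1}(x)]$ pins down the class of $x$. My plan is to prove the two implications separately, using Lemma~\ref{lem:2} for one direction and Lemma~\ref{new} for the other.

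For the forward direction, suppose $\tau(x) \le m$, i.e.\ no $[B_\ell(x)]$ with $\ell > m$ is special. Assume $[B_{m+1}(x)] = [B_{m+1}(y)]$; I claim $[B_\ell(x)] = [B_\ell(y)]$ for every $\ell \ge m+1$, by induction on $\ell$. Given $[B_\ell(x)] = [B_\ell(y)]$ with $\ell \ge m+1$: since $\ell > m \ge \tau(x)$, the ball $[B_\ell(x)]$ is not special, so it has a \emph{unique} extension to an $(\ell+1)$-ball. As $[B_\ell(y)]$ equals $[B_\ell(x)]$ and $[B_{\ell+1}(y)]$ is one of its extensions, necessarily $[B_{\ell+1}(y)] = [B_{\ell+1}(x)]$. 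Hence $[B_\ell(x)] = [B_\ell(y)]$ for all $\ell$, and taking $\ell = n_k \to \infty$, Lemma~\ref{lem:2} gives that $x$ and $y$ are in the same class.

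For the converse, I argue by contrapositive: suppose $\tau(x) > m$, i.e.\ there exists $\ell$ with $m < \ell \le \tau(x)$ such that $[B_\ell(x)]$ is special — in particular $[B_{\tau(x)}(x)]$ is special. By the definition of special, there is a vertex $z$ with $[B_{\tau(x)}(z)] = [B_{\tau(x)}(x)]$ but $[B_{\tau(x)+1}(z)] \ne [B_{\tau(x)+1}(x)]$. Now apply Lemma~\ref{new} with $n = \tau(x)$ and the descent parameter chosen so that we land at radius $m+1$: concretely, setting the Lemma's index so that $n - (\text{index}) + 1 = m+1$, i.e.\ stepping down $\tau(x) - m$ times, we obtain vertices $x'$ (on the $(\tau(x)-m)$-sphere of $x$) and $z'$ with $[B_{m+1}(x')] \ne [B_{m+1}(z')]$ but $[B_m(x')] = [B_m(z')]$. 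Thus $[B_{m+1}(x')]$ is special while $x'$ and $z'$ have the same $(m)$-ball; by building a color-preserving isomorphism of $(m+1)$-balls we can produce $y$ with $[B_{m+1}(x)] = [B_{m+1}(y)]$ yet $x, y$ in different classes, contradicting the right-hand side. (Alternatively, and more cleanly: if $x$ and $y$ are in the same class then $[B_\ell(x)] = [B_\ell(y)]$ for \emph{all} $\ell$, so the specialness of $[B_{\tau(x)}(x)]$ transfers to $y$, and one directly exhibits the failure of unique determination at level $m+1$ by the descent in Lemma~\ref{new}.)

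The main obstacle I anticipate is the converse direction: I must be careful that Lemma~\ref{new}'s conclusion — existence of $x'$ on a sphere of $x$ with a special $(m+1)$-ball — actually yields a \emph{pair} $x, y$ witnessing failure of the implication "$[B_{m+1}(x)] = [B_{m+1}(y)] \Rightarrow$ same class", rather than merely a pair of auxiliary vertices. The bridge is that any color-preserving isomorphism between $(m+1)$-balls extends to a tree automorphism (regularity of $T$), and if it extended to a \emph{color}-preserving automorphism then by Lemma~\ref{lem:2}-type reasoning all balls would agree, contradicting specialness at a higher level. So the existence of a special ball of radius $> m$ sitting inside (a translate of) $B_{\tau(x)}(x)$ is exactly the obstruction to the determination property at level $m+1$. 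Making this last step precise — translating "special ball present" into "two vertices with equal $(m+1)$-balls but different classes" — is where the care is needed; everything else is the two monotonicity/uniqueness arguments above.
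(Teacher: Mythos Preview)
Your forward direction is correct and matches the paper's argument: from $\tau(x)\le m$ and $[B_{m+1}(x)]=[B_{m+1}(y)]$, the non-specialness of every $[B_\ell(x)]$ for $\ell>m$ forces $[B_\ell(x)]=[B_\ell(y)]$ for all $\ell$, and Lemma~\ref{lem:2} finishes.

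The converse, however, has a genuine gap --- and the gap is that you are reaching for the wrong tool. Lemma~\ref{new} descends from a special $n$-ball to produce vertices $x',z'$ with \emph{equal} $m$-balls but \emph{unequal} $(m+1)$-balls. That is the opposite of what you need: the right-hand side of the lemma fails when you exhibit $y$ with $[B_{m+1}(x)]=[B_{m+1}(y)]$ (equal!) yet $x,y$ in different classes. Your proposed ``bridge'' does not close this mismatch, and you correctly sense that ``making this last step precise'' is where the argument stalls.

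The fix is much simpler and does not use Lemma~\ref{new} at all. If $\tau(x)=t>m$, then by definition of $\tau$ there is a vertex $y$ with $[B_t(x)]=[B_t(y)]$ and $[B_{t+1}(x)]\ne[B_{t+1}(y)]$. Since $t\ge m+1$, restricting the isomorphism gives $[B_{m+1}(x)]=[B_{m+1}(y)]$; and since their $(t+1)$-balls differ, $x$ and $y$ cannot be in the same class. This single witness $y$ already violates the right-hand side --- no descent, no auxiliary vertices $x',z'$, no extension-of-isomorphism argument is needed. This is exactly the paper's proof of the converse, in two lines.
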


\begin{proof}
Let $\tau(x) \leq m$ and suppose $[B_{m+1}(x)] = [B_{m+1}(y)]$. If $x$ and $y$ are not in the same class, then $[B_{m+s}(x)] = [B_{m+s}(y)]$ and $[B_{m+s+1}(x)] \neq [B_{m+s+1}(y)]$ for some $s \geq 1$, which implies that $m+s > \tau(x) $ is in the type set, a contradiction. 

Conversely, if $m < t= \tau(x)$, then there exists $y$ such that $[B_t(x)] = [B_t(y)]$ and $[B_{t+1}(x) ] \neq [B_{t+1}(y)]$, thus $[B_{m+1}(x) ]=[B_{m+1}(y)]$ but $x,y$ are not in the same class.
\end{proof}

\begin{definition}
A coloring $\phi$ is called to be \textit{of bounded type} if it has a vertex of bounded type. 
\end{definition}
In fact, if a vertex is of bounded type, then every vertex is of bounded type by the following lemma.

\begin{lemma}\label{lem:bounded}
If a coloring $\phi$ on $T$ is of bounded type, then every vertex of $T$ is of bounded type.
\end{lemma}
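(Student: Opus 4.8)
The plan is to reduce to the case of adjacent vertices and then induct along a geodesic. It suffices to prove the following: if $x\in VT$ has bounded type and $y$ is a neighbor of $x$, then $y$ has bounded type. Indeed, given $x_0$ of bounded type and an arbitrary vertex $y$, applying this statement successively along a geodesic $x_0=v_0,v_1,\dots,v_d=y$ yields $\tau(v_j)<\infty$ for all $j$. So fix $x$ with $\tau(x)=M<\infty$ and a neighbor $y$, and suppose toward a contradiction that $\Lambda(y)$ is infinite (equivalently, unbounded). Choose $n\in\Lambda(y)$ to be very large (how large will be determined below); since $[B_n(y)]$ is special it has at least two extensions, so there are vertices $a,b$ with $[B_n(a)]=[B_n(b)]=[B_n(y)]$ and $[B_{n+1}(a)]\ne[B_{n+1}(b)]$.

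Fix color-preserving isomorphisms $f_a\colon B_n(y)\to B_n(a)$ and $f_b\colon B_n(y)\to B_n(b)$, set $x_a=f_a(x)$, $x_b=f_b(x)$ (neighbors of $a$ and $b$), and let $g=f_b\circ f_a^{-1}\colon B_n(a)\to B_n(b)$, a color-preserving isomorphism with $g(a)=b$, $g(x_a)=x_b$. Restricting $f_a,f_b$ to $B_{n-1}(x)\subseteq B_n(y)$ gives $[B_{n-1}(x_a)]=[B_{n-1}(x)]=[B_{n-1}(x_b)]$. As $n-1>M=\tau(x)$, the ball $[B_{n-1}(x)]$ is not special, hence has a unique extension at every larger radius; iterating this (all intermediate balls having the same class as those of $x$, hence non-special) gives $[B_m(x_a)]=[B_m(x)]=[B_m(x_b)]$ for every $m\ge n-1$. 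In particular $[B_n(x_a)]=[B_n(x_b)]$ and $[B_{n+1}(x_a)]=[B_{n+1}(x_b)]$.

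The heart of the argument is that the discrepancy between $[B_{n+1}(a)]$ and $[B_{n+1}(b)]$ cannot lie in the branch toward $x_a$. The branch $\BR_{n+1}(a,x_a)$ is contained in $B_n(x_a)$, so one starts from a color-preserving isomorphism $B_n(x_a)\to B_n(x_b)$ and corrects it by a color-preserving automorphism of $B_n(x_b)$ fixing $x_b$ that permutes equivalent branches (legitimate by the abundance of automorphisms of a regular tree, since $g$ already provides $[\BR_n(x_a,a)]=[\BR_n(x_b,b)]$), obtaining an isomorphism taking $a$ to $b$; restricting it yields $[\BR_{n+1}(a,x_a)]=[\BR_{n+1}(b,x_b)]$. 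Writing the remaining neighbors of $a$ (resp.\ $b$) as $a_i$ (resp.\ $b_i=g(a_i)$), Lemma~\ref{lem:1}(2) then forces $[\BR_{n+1}(a,a_i)]\ne[\BR_{n+1}(b,b_i)]$ for some $i$. Since $g$ gives $[\BR_n(a_i,a)]=[\BR_n(b_i,b)]$, Lemma~\ref{lem:1}(3) yields $[B_n(a_i)]\ne[B_n(b_i)]$, while $g$ restricted to $B_{n-1}(a_i)\subseteq B_n(a)$ gives $[B_{n-1}(a_i)]=[B_{n-1}(b_i)]$; hence $[B_{n-1}(a_i)]$ is special. Transporting back, the neighbor $y_i:=f_a^{-1}(a_i)$ of $y$ is distinct from $x$ and satisfies $n-1\in\Lambda(y_i)$; and taking $n>1+\max\{\tau(v):v\sim y,\ \tau(v)<\infty\}$ forces $y_i$ to be of unbounded type.

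To finish, feed this into Lemma~\ref{new}: applied to the pair $a,b$ it produces a geodesic $y=p_0,p_1,\dots,p_n$ with $[B_{n-m}(p_m)]$ special for all $m$, and the computation of the previous paragraph, applied at each $p_m$ in place of $y$, shows that $p_1\ne x$ and, more generally, that $p_{m+1}$ avoids every bounded-type neighbor of $p_m$ (for $n$ large), so the whole geodesic runs through vertices of unbounded type. Running this for arbitrarily large $n\in\Lambda(y)$ and passing to a limit in the locally finite tree $T$ (a K\"onig-type argument) extracts an infinite geodesic ray all of whose vertices have unbounded type; combined with the presence of a bounded-type vertex and one more application of Lemma~\ref{new}, this gives the desired contradiction, so $\Lambda(y)$ is finite. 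I expect the main obstacle to be exactly this last point — ruling out that the descent continues forever while staying disjoint from the region controlled by the bounded-type vertex, which is why Lemma~\ref{new} must be invoked at all (arbitrarily large) radii rather than a single one — along with the routine but delicate bookkeeping of which branch corresponds to which under the various ball isomorphisms.
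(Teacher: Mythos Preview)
Your reduction to adjacent vertices and the analysis in the first three paragraphs are essentially correct: you show that if $\tau(x)=M$ and $n\in\Lambda(y)$ with $n-1>M$, then the branch of $B_{n+1}(a)$ toward $x_a$ matches the corresponding branch of $B_{n+1}(b)$, so the discrepancy witnessing the specialness of $[B_n(y)]$ must propagate to some neighbor $y_i$ of $y$ other than $x$, giving $n-1\in\Lambda(y_i)$.

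The final paragraph, however, does not close the argument, and you seem to sense this. The iteration fails for the following reason: your computation at step $0$ required $n-1>\tau(x)=M$, which you arranged by choosing $n$ large. At step $m$ the analogous computation at $p_m$ would require $n-m-1>\tau(v)$ for every bounded-type neighbor $v$ of $p_m$; but you have no a~priori bound on such $\tau(v)$ for vertices at distance $m+1$ from $y$ --- indeed, bounding those is essentially the content of the lemma --- so no single choice of $n$ made at the outset can make all steps go through. (Choosing $n$ after seeing the geodesic is not possible, since the geodesic itself depends on $n$.) Even granting an infinite geodesic ray of unbounded-type vertices, you have not explained how ``one more application of Lemma~\ref{new}'' produces a contradiction; the mere coexistence of such a ray with a bounded-type vertex elsewhere in $T$ is not in itself absurd.

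The paper's proof avoids this circularity by arguing directly rather than by contradiction. With $\tau(x)=m$ and neighbors $x_1,\dots,x_k$, set $N=\max\bigl(m+1,\ \max_{i,j} n_{ij}\bigr)$, where $n_{ij}$ is the least radius distinguishing $x_i$ from $x_j$ when they lie in different classes. If $[B_\ell(x_i)]=[B_\ell(z)]$ for some $\ell>N$, the isomorphism carries $B_{m+1}(x)$ onto $B_{m+1}(f(x))$, so by Lemma~\ref{lem:basic} the vertex $f(x)$ is in the class of $x$; pulling back by a global color-preserving automorphism sends $z$ to some neighbor $x_j$ of $x$. Then $[B_N(x_i)]=[B_N(x_j)]$ forces $x_i\sim x_j$ by the definition of $N$, hence $x_i\sim z$, and Lemma~\ref{lem:basic} gives $\tau(x_i)\le N$. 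You were close: your observation that $x_a$ lies in the class of $x$ is exactly the mechanism the paper exploits, but the right move is to use it to pull $z$ (your $a$ or $b$) back to the finite set of neighbors of $x$ and compare there, rather than to launch a geodesic escaping from $x$.
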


\begin{proof}
Let $x$ be a vertex of bounded type. It suffice to show that if $x'$ is a vertex of distance $1$ from $x$, then $x'$ is also of bounded type.

Let $\tau(x)=m$. Let $x_i, i=1, \dots, k$ be the neighboring vertices of $x$.
If $x_i$ and $x_j$ are not in the same class, by Lemma~\ref{lem:2}, there is an integer $n$ such that $[B_{n} (x_i)] \ne [B_{n} (x_j)]$. Let $n_{ij}$ be the minimum of such $n$'s if it exists.
Denote by $N$ the maximum of $m+1$ and $n_{ij}$'s (for $i,j$ such that $x_i$, $x_j$ are not in the same class). 

We claim that the elements of the type set of every $x_i$ are bounded by $N$.
Indeed, suppose that for some $l>N$ and some vertex $z$, $[B_{\ell} (x_i)] = [B_{\ell} (z)]$, say by a color-preserving graph isomorphism $f : B_{\ell} (x_i) \to B_{\ell} (z)$.
Then, since $B_{m+1} (x) \subset B_{\ell} (x_i)$, we have
$[B_{m+1} (x)] = [B_{m+1} (f(x))]$. 

Since $\tau(x) = m$, $x$ and $f(x)$ are in the same class by Lemma~\ref{lem:basic}.
Let $g$ be a color-preserving tree automorphism sending $f(x)$ to $x$.
Then $d(g (z), x) = d(z, f(x))= 1$ which implies that 
$g (z) = x_j$ for some $j$, i.e. $z$ and $x_j$ are in the same class.

Hence, $g \circ f$ is a color-preserving graph isomorphism from $B_{\ell} (x_i)$ to $B_{\ell} (x_j)$,
i.e., $[B_{\ell} (x_i) ] = [B_{\ell} (x_j)]$, 
which is followed by $[B_{N} (x_i)] = [B_{N} (x_j)]$ since $\ell > N$.
If $x_i$ and $x_j$ are not in the same class, then since $N \geq n_{ij}$, $[B_N(x_i)] \neq [B_N(x_j)]$, which is a contradiction. Thus, $x_i$ and $x_j$ are in the same class. Since $x_j$ and $z$ are in the same class, it follows that $x_i$ and $z$ are in the same class. By Lemma~\ref{lem:basic}, we have $\tau(x_i) \leq N$.
\end{proof}


\begin{lemma}\label{lem:type2}
If there exists a special $\ell$-ball, then for every vertex $x$, there exists $M$ depending on $[B_\ell(x)]$ such that $B_{\ell+M}(x)$ contains a special $\ell$-ball.
Moreover, if the alphabet is finite, then there exists a constant $N$ such that every $N$-ball in the tree contains a special $\ell$-ball.  
\end{lemma}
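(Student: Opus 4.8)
The plan is to prove the first statement by tracking the \emph{forced extension chain} of the colored $\ell$-ball $[B]:=[B_\ell(x)]$, and then to deduce the uniform ``moreover'' by taking a maximum over the finitely many colored $\ell$-balls.

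Fix $x$ and set $C_\ell:=[B_\ell(x)]$. Using the remark that a non-special colored ball has a unique $(r+1)$-extension, I would build a chain $C_\ell,C_{\ell+1},C_{\ell+2},\dots$ in which $C_{r+1}$ is the unique extension of $C_r$ so long as $C_r$ is not special; each $C_r$ so produced is a colored ball occurring in $\phi$. Either the chain reaches a special ball or it does not, and I treat the two cases separately.

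\emph{Case 1: the chain reaches a special ball.} Let $r_0\ge\ell$ be least with $C_{r_0}$ special. Since $C_\ell,\dots,C_{r_0-1}$ are non-special, uniqueness of extensions gives by induction $[B_r(x)]=C_r$ for all $\ell\le r\le r_0$; in particular $[B_{r_0}(x)]=C_{r_0}$ is special, so by Lemma~\ref{new} (or trivially if $r_0=\ell$) the ball $B_{r_0}(x)$ contains a special $\ell$-ball. Hence $M:=r_0-\ell$ works, and $r_0$ depends only on $[B]$. \emph{Case 2: the chain never reaches a special ball}, so every $C_r$ with $r\ge\ell$ is non-special. The same induction shows that \emph{every} vertex $y$ with $[B_\ell(y)]=[B]$ satisfies $[B_r(y)]=C_r$ for all $r$; by Lemma~\ref{lem:2} all such $y$ lie in a single class, so any two of them are interchanged by a color-preserving automorphism. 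By hypothesis some vertex realizes a special $\ell$-ball, and $T$ is connected, so the distance $D$ from $x$ to the nearest such vertex is finite; and since a color-preserving automorphism permutes the set of centers of special $\ell$-balls, $D$ is independent of the chosen occurrence of $[B]$. Taking $M:=D$, the ball $B_{\ell+M}(x)$ contains that special $\ell$-ball, and $M$ depends only on $[B]$.

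Write $M([B])$ for the value of $M$ produced above. For the ``moreover'': if $\A$ is finite, then a ball of radius $\ell$ in $T$ admits only finitely many colorings, so $\B_\phi(\ell)$ is finite; put $M^\ast=\max_{[B]\in\B_\phi(\ell)}M([B])$ and $N=\ell+M^\ast$. Every vertex $x$ has $[B_\ell(x)]\in\B_\phi(\ell)$, and $B_N(x)\supseteq B_{\ell+M([B_\ell(x)])}(x)$, which contains a special $\ell$-ball. I expect the main obstacle to be Case 2: one must see that, although the forced chain $C_\ell,C_{\ell+1},\dots$ never becomes special, the coloring is rigid enough around any occurrence of $[B]$ that the location of the closest special $\ell$-ball is an isomorphism invariant of the pointed colored tree, hence uniform over all occurrences of $[B]$; this is exactly where Lemma~\ref{lem:2} and the fact that color-preserving automorphisms preserve specialness of balls are essential. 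The bookkeeping in Case 1 (in particular that ``$[B_{r_0}(x)]$ is special'' immediately puts us in a position to apply Lemma~\ref{new}) and the maximum argument for a finite alphabet are routine.
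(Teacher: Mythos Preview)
Your argument is correct. The core device matches the paper's: track the forced extension chain $C_\ell, C_{\ell+1},\dots$, and when some $C_{r_0}$ is special, apply Lemma~\ref{new} to locate a special $\ell$-ball inside $B_{r_0}(x)$; then maximize over the finitely many classes in $\B_\phi(\ell)$ for the uniform bound. The substantive difference is your Case~2. The paper does not split into cases; it asserts outright that the chain must terminate, reasoning that otherwise no $B_{\ell+m}(x)$ would contain a special $\ell$-ball and hence neither would $T$. You instead allow a non-terminating chain and argue that then every occurrence of $[B]$ has the same $r$-ball for all $r\ge\ell$, so by Lemma~\ref{lem:2} all such occurrences lie in a single class, and the distance to the nearest special $\ell$-ball center becomes a class invariant depending only on $[B]$. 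Your route is in fact the more careful one: the implication the paper leans on --- that non-specialness of each $[B_{\ell+m}(x)]$ forces every $B_{\ell+m}(x)$ to be free of special $\ell$-balls --- is not delivered by Lemma~\ref{new}, and it actually fails. For instance, take the unique vertex colored $b$ in the paper's own eventually periodic Sturmian examples with $\ell=0$: its $0$-ball $\{b\}$ occurs exactly once, so no $[B_m(v_0)]$ is ever special, yet the special $0$-ball $\{a\}$ sits at distance~$1$. Your Case~2 is precisely what handles this situation.
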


\begin{proof} 
It depends only on $[B_\ell(x)]$ whether $[B_\ell(x)]$ is special or not. If it is not special, there exists a unique extension $[B_{\ell+1}(x)]$. It depends only on $[B_{\ell+1}(x)]$ (thus depends only on $[B_\ell(x)]$) whether $[B_{\ell+1}(x)]$ is special or not. If it is special, then one of the vertices neighboring $x$ is the center of the special $\ell$-ball by Lemma~\ref{new}. If $[B_{\ell+1}(x)]$ is not special, then there exists a unique extension $[B_{\ell+2}(x)]$. It depends only on $[B_{\ell+2}(x)]$ (thus depends only on $[B_\ell(x)]$) whether $[B_{\ell+2}(x)]$ is special or not.

Repeat this process to prove the first part of the lemma. Note that the process ends in a finitely many steps, otherwise, $[B_{\ell+m}(x)]$ does not contain any special $\ell$-ball for any $m$, which implies that $T$ does not have a special $\ell$-ball, a contradiction.

Denote $M$ in the first part of the lemma by $M([B_{\ell}(x)])$. If the alphabet is a finite set, then there are finitely many isomorphism classes of $\ell$-balls. Thus, $N = \ell + \max \{M([B_\ell(x)])\}$ is finite. We conclude that there exists $N>0$ such that for every vertex $x$, $B_{N}(x)$ includes a special $\ell$-ball.
\end{proof}

For the rest of this section, let us study eventually periodic colorings, before we study Sturmian colorings in the next section.

Let us fix a coloring $\phi$. Let $K$ be a finite subset of $T$. 
A coloring on a subtree $U$ has a periodic extension if there exists a periodic coloring $\bar \phi$ on $T$ such that $\bar \phi |_{U} = \phi$. 

\begin{definition}\label{def:event}
A coloring $\phi: VT \to \A$ is called \textit{eventually periodic} if there exists a subtree $K$ of finite number of vertices 
such that $T-K =\bigcup T_i $ is a finite union of subtrees $T_i$ such that $\phi$ on each $T_i$ has a periodic extension $\phi_i$. 
\end{definition}

One may assume $K$ to be a finite ball by taking a ball containing a subtree $K$. A periodic coloring is clearly eventually periodic.

For connected sets $K, K'$, let us denote $[K]=[K']$ if there exists a color-preserving graph isomorphism between them.

\begin{lemma}\label{lemma:2}
If a non-periodic coloring $\phi$ is eventually periodic, then there exists a finite colored subtree which appears exactly once. In fact, we may choose $K$ in Definition~\ref{def:event} to be such a subtree.
\end{lemma}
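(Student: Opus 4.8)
The plan is to start from the hypothesis that $\phi$ is eventually periodic but not periodic, fix a finite subtree $K$ as in Definition~\ref{def:event} with $T - K = \bigcup_{i=1}^r T_i$ and periodic extensions $\phi_i$ on each $T_i$, and enlarge $K$ to a ball $B_N(x_0)$ containing it; replacing $K$ by this ball only cuts $T - K$ into (possibly more) subtrees, each of which still inherits a periodic extension by restricting the $\phi_i$. So without loss of generality $K = B_N(x_0)$. The key point to exploit is that if \emph{every} finite colored subtree appeared at least twice in $T$, then in particular $[B_m(x_0)] = [B_m(y)]$ for some $y \ne x_0$ and every $m$ (using that the alphabet restricted to $B_N(x_0)$ is finite, so infinitely many $m$ force a repeated center by a pigeonhole / compactness argument, then invoking Lemma~\ref{lem:2}). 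I would then argue that this forces $\phi$ to be periodic, contradicting the hypothesis.

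The main work is to convert ``every finite ball centered at $x_0$ repeats'' into periodicity. Here is how I would do it. First, since each $\phi_i$ is periodic, its subword complexity is bounded (Theorem~\ref{thm:periodic}), so there are only finitely many classes of balls of any fixed radius appearing entirely inside a single $T_i$; consequently the full coloring $\phi$ has bounded subword complexity \emph{outside} a bounded neighborhood of $K$. More precisely, for a vertex $z$ deep inside some $T_i$, the ball $B_m(z)$ lies entirely in $T_i$ once $d(z, K) > m$, so $[B_m(z)]$ is one of finitely many classes. The only balls that can carry ``new'' information are those whose center is within distance $m$ of $K$, and there are boundedly many such centers up to the action of the color-preserving automorphism group — but this is exactly what needs $x_0$ (hence $K$) to repeat. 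If $[B_m(x_0)] = [B_m(y)]$ for all $m$, Lemma~\ref{lem:2} gives a color-preserving automorphism $g$ with $g(x_0) = y$, and then $g(K) = g(B_N(x_0)) = B_N(y)$ is a second copy of the colored subtree $K$; iterating, one builds enough color-preserving symmetry to see that $\Gamma = \mathrm{Aut}(T,\phi)$ acts cocompactly, i.e. $\phi$ is periodic. I would make the ``iterating / cocompact'' step precise by showing $b_\phi(n)$ is bounded: any $n$-ball either lies inside some $T_i$ (finitely many classes, as above) or its center lies in the $n$-neighborhood of $K$, and the latter centers all lie within distance $n + N$ of $x_0$, hence — using that all large balls at $x_0$ repeat, so $x_0$'s class is ``generic'' — these too fall into finitely many classes; then Theorem~\ref{thm:periodic} applies.

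I expect the delicate step to be this last bounding of $b_\phi(n)$: one has to rule out that the ``boundary region'' where the several periodic patterns $\phi_i$ meet along $K$ produces unboundedly many distinct local configurations as $n$ grows. The resolution is that there are only finitely many $T_i$ and $K$ is finite, so the boundary data is a single finite colored subtree (a bounded neighborhood of $K$ together with the finitely many ``boundary classes'' of the $\phi_i$), which contributes only a bounded additive constant to $b_\phi(n)$; combined with the assumption that $[B_m(x_0)]$ repeats for all $m$, every ball class is accounted for by finitely many possibilities. Finally, for the second sentence of the lemma — that $K$ itself may be taken to be a subtree appearing exactly once — I would note that the argument produces \emph{some} finite colored subtree $S$ appearing exactly once (if none did, $\phi$ would be periodic); then one enlarges the original $K$ to a ball large enough to contain a translate of $S$, and this enlarged $K$ appears exactly once while still satisfying Definition~\ref{def:event}.
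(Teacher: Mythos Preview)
Your overall plan---assume for contradiction that every finite colored subtree appears at least twice and deduce periodicity---is sound in spirit, but the execution has a genuine gap. The problem is the case where the extra copies of $[B_m(x_0)]$ stay close to $x_0$. Your ``pigeonhole/compactness'' step does not produce a single $y$ valid for all $m$: the witnesses $y_m$ may wander to infinity (no compactness in $VT$), or they may all lie in a bounded set, in which case you do get a fixed $y$ and hence one nontrivial color-preserving automorphism $g$---but a single $g$ (elliptic or hyperbolic) does not make $\Gamma=\mathrm{Aut}(T,\phi)$ cocompact, and your ``iterating'' remark does not supply the missing argument. Likewise, your direct bound on $b_\phi(n)$ fails as written: for centers $z$ with $d(z,K)\le n$ the number of classes $[B_n(z)]$ can grow with $n$ (this is exactly what happens in the eventually periodic Sturmian examples, where $b(n)=n+2$), so ``the boundary data contributes only a bounded additive constant'' is false without further input. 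The assumption that $[B_m(x_0)]$ repeats would help \emph{if} the second copy sat deep inside some $T_i$, since then every sub-ball of $B_m(x_0)$ would be a $\phi_i$-ball; but nothing you wrote forces the second copy out of a neighborhood of $K$. Concretely, the scenario you have not handled is: the $\Gamma$-orbit of $x_0$ is a finite set $Y$ with $|Y|\ge 2$.

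The paper's proof avoids this trap by arguing in two steps that are each easier than your single step. First, it proves the weaker statement that some enlargement $K_r=\{x:d(x,K)\le r\}$ appears only \emph{finitely} many times: if every $[K_r]$ appeared infinitely often, a pigeonhole on the finitely many $T_i$ (together with the nesting $K_r\subset K_{r+1}$) places a copy of every $[K_r]$ entirely inside a single $T_i$, whence every $n$-ball of $\phi$ already occurs in $\phi_i$ and $b_\phi(n)\le b_{\phi_i}(n)$ is bounded, contradicting non-periodicity. Second---and this is the idea you are missing---it upgrades ``finitely many times'' to ``exactly once'' by a minimality trick: let $\bar K$ be the smallest connected subtree containing \emph{all} the (finitely many, say $N$) copies of $[K_r]$; any $K'$ with $[\bar K]=[K']$ must itself contain $N$ copies of $[K_r]$, hence all of them, hence $\bar K\subset K'$, and equality of sizes gives $K'=\bar K$. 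This convex-hull step is precisely what resolves the $|Y|\ge 2$ finite-orbit case that your argument leaves open; once you insert it, your strategy can be completed, but as it stands the proof is incomplete.
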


\begin{proof} 
Choose a ball $K$ satisfying Definition~\ref{def:event} and let $K_r = \{ x \in T : d(x,K) \le r \}$.
We first claim that there exists some $r$ such that $[K_r]$ appears only finitely many times.

Suppose the colored ball $[K_r]$ appears in $T$ infinitely many times.
Since $K_r$ is connected and $T_i$ are connected components of $T-K$, there exists some $T_{i}$ in which $[K_r]$ appears infinitely many times.
Since $(K_r)_{r \in \mathbb{N}}$ is an increasing sequence of balls, if $[K_{r+1}]$ appears in $T_i$, then so does $[K_r]$. Thus there is a subtree $T_i$ in which $[K_r]$ appears infinitely many times for all $r > 0$.
For each $n$-ball $B$ of $T$, $B \subset K_r$ for $r$ large enough.
Therefore, we have $b_\phi (n) \le b_{\phi_i} (n)$, which is bounded, which contradicts the non-periodicity of $\phi$.

Let $\bar K$ be the minimal connected set containing all the colored balls equivalent to $K_r$ which appears only finitely many times, say $N$ times.
Such a subtree is unique since there is a unique path between given balls.
If $K' $ satisfies $[\bar K] = [K']$,
then $K'$ also contains $N$ colored balls equivalent to $K_r$, thus it contains all colored balls equivalent to $K_r$. Thus $\bar K \cap K'$ contains all colored balls equivalent to $[K_r]$.

The minimality condition implies that $\bar K = \bar K \cap K'$, which implies $\bar K = K'$.
Hence, the coloring of $\bar K$ appears in $T$ exactly once.
\end{proof}

\begin{proposition}
Any eventually periodic coloring $\phi$ is of bounded type. 
\end{proposition}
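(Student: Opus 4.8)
The plan is to reduce the claim to showing that every vertex has a bounded type set, which is precisely the content of Lemma~\ref{lem:bounded}: once we exhibit a \emph{single} vertex of bounded type, the coloring is of bounded type by definition, and then all vertices are of bounded type. So it suffices to produce one vertex $x$ with $\Lambda(x)$ finite.

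First I would separate the periodic case: if $\phi$ is periodic, then $b_\phi(n)$ is bounded by Theorem~\ref{thm:periodic}, hence there are no special $n$-balls for $n$ large (a special $n$-ball forces $b_\phi(n+1) > b_\phi(n)$), so every vertex is trivially of bounded type. Thus assume $\phi$ is non-periodic but eventually periodic. Apply Lemma~\ref{lemma:2} to get a finite colored subtree $\bar K$ that appears in $T$ exactly once, and which may be taken to be the set $K$ of Definition~\ref{def:event}, so that $T - K = \bigcup_{i=1}^{r} T_i$ with $\phi|_{T_i}$ admitting a periodic extension $\phi_i$. Enlarge $K$ to a ball $B_R(x_0)$ if convenient; the point is that $\bar K \subseteq B_R(x_0)$ appears exactly once.

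The key step is then to bound $\tau(x_0)$. Suppose $[B_n(x_0)] = [B_n(y)]$ for some $n \ge R$ and some vertex $y$. Since $\bar K \subseteq B_R(x_0) \subseteq B_n(x_0)$, any color-preserving isomorphism $f: B_n(x_0) \to B_n(y)$ carries the unique copy of $\bar K$ inside $B_n(x_0)$ to a copy of $\bar K$ inside $B_n(y)$; because $\bar K$ appears in $T$ exactly once, this forces $f(\bar K) = \bar K$, hence $f$ restricts to an automorphism of $\bar K$ — and in fact, after composing with an automorphism of $T$ extending $f|_{\bar K}^{-1}$ (which exists since balls/subtrees of a regular tree extend, though here one should be slightly careful and instead argue via a standard rigidity: an isomorphism of $B_n(x_0)$ to $B_n(y)$ fixing the unique subtree $\bar K$ setwise is determined on a neighborhood of $\bar K$). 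The upshot I want is that $y$ lies in one of the finitely many $T_i$ at bounded distance from $K$, OR $y$ is close to $x_0$; more robustly: since $\phi$ restricted to each $T_i$ is periodic, each $T_i$ is of bounded type (periodic colorings have no large special balls as above, and this passes to the subtree after taking the periodic extension), so there is $N_i$ with $\tau(z) \le N_i$ for all $z \in T_i$ in the extended coloring $\phi_i$; since the actual coloring $\phi$ agrees with $\phi_i$ on all of $T_i$, for $z$ deep inside $T_i$ a ball $B_n(z)$ with $n$ small relative to $d(z, K)$ sees only $\phi_i$, so $\tau_\phi(z) \le \max(N_i, d(z,K))$. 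Vertices within bounded distance of $K$ form a finite set. Hence every vertex of $T$ has finite type set, with a uniform-ish bound away from $K$; in particular $x_0$ is of bounded type, so $\phi$ is of bounded type.

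The main obstacle is the bookkeeping in the middle step: matching balls in $T$ with balls in the periodic extensions $\phi_i$ and controlling how far a ball $B_n(z)$ around a vertex $z \in T_i$ can ``reach into'' $K$ or the other $T_j$'s. Concretely, one must verify that for $z \in T_i$ with $d(z,K) = d$, the ball $B_{d-1}(z)$ is genuinely a ball in the periodic tree carrying $\phi_i$, so specialness of $[B_m(z)]$ for $m < d$ is the same whether computed in $\phi$ or in $\phi_i$; combined with $\tau_{\phi_i}(z) \le N_i$, this caps $\tau_\phi(z)$. I expect the cleanest writeup to fix $K = \bar K$ exactly once and argue directly that $[B_n(x_0)] = [B_n(y)]$ with $n$ large forces $y$ and $x_0$ to be in the same class, using uniqueness of $\bar K$ to pin down $f$ on $\bar K$ and then Lemma~\ref{lem:basic} to conclude $\tau(x_0) < \infty$.
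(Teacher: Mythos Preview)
Your attempt mixes two lines of argument, and they need different treatment.

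The detour through $\tau_\phi(z)\le \max(N_i,d(z,K))$ for $z\in T_i$ has a genuine gap: specialness of $[B_m(z)]$ is not a local property of the ball. Even when $B_{m+1}(z)\subset T_i$, so that the ball itself looks identical in $\phi$ and in $\phi_i$, the colored ball $[B_m(z)]$ can still be special in $\phi$ without being special in $\phi_i$, because the witness $w$ with $[B_m(w)]=[B_m(z)]$ and $[B_{m+1}(w)]\ne[B_{m+1}(z)]$ may sit near $K$ or in a different $T_j$. So the displayed inequality is not justified and this branch should be dropped.

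Your direct argument for $x_0$, by contrast, is correct once you extract the right consequence of $f(\bar K)=\bar K$. You do not need to extend $f|_{\bar K}^{-1}$ to an automorphism of $T$ (that is where your write-up tangles). Instead, since $f$ is an isometry and $f(\bar K)=\bar K$, you get $d(y,\bar K)=d(f(x_0),f(\bar K))=d(x_0,\bar K)$, so $y$ lies in the \emph{finite} set $S=\{z:d(z,\bar K)=d(x_0,\bar K)\}$. For each $y'\in S$ not in the class of $x_0$ choose $m_{y'}$ with $[B_{m_{y'}}(x_0)]\ne[B_{m_{y'}}(y')]$ (contrapositive of Lemma~\ref{lem:2}); with $M=\max\bigl(R,\max_{y'}m_{y'}\bigr)$ you get that $[B_M(x_0)]=[B_M(y)]$ forces $y$ into the class of $x_0$, hence $\tau(x_0)\le M-1$ by Lemma~\ref{lem:basic}. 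This finishes the proof.

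This completed route is more elementary than the paper's. The paper fixes $x\in K$, assumes $[B_n(x)]$ is special for some $n\ge d+4p+4$ (where $d=\mathrm{diam}(K)$ and $p=\max_i p_i$ with $b_{\phi_i}(p_i+1)=b_{\phi_i}(p_i)$), invokes Lemma~\ref{new} to produce vertices $y,y'$ on the $(n-2p-1)$-spheres with $[B_{2p+1}(y)]=[B_{2p+1}(y')]$ but $[B_{2p+2}(y)]\ne[B_{2p+2}(y')]$, uses $f(K)=K$ to force $B_{2p+2}(y)$ and $B_{2p+2}(y')$ entirely into pieces $T_1,T_2$, and then argues from the periodic structure that $\phi_1\cong\phi_2$, contradicting $[B_{2p+2}(y)]\ne[B_{2p+2}(y')]$. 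Your argument avoids Lemma~\ref{new} and the explicit constant, trading them for a non-explicit bound on $\tau(x_0)$ coming from the finite set $S$.
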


\begin{proof}
By the definition there exists a finite set $K$ 
such that $T-K = \bigcup T_i$ is a finite union of subtrees $T_i$, each of which $\phi$ has a periodic extension $\phi_i :T \to \A$ on.
If $\phi$ is eventually periodic but not periodic, then by Lemma~\ref{lemma:2}, 
we may assume that the colored set $[K]$ appears in $T$ exactly once.

Let $p_i$ be an integer that satisfies $b_{\phi_i} (p_i+1) = b_{\phi_i} (p_i)$ and let $p = \max \{ p_i \} \ge 0$.
Let $x$ be a vertex in $K$. It is enough to show that the vertex $x$ is of bounded type.
We claim that $[B_{n} (x)]$ is not special for $n \ge d+ 4p+4$,
where $d$ is the diameter of $K$. 

Indeed, suppose that $x'$ is a vertex in $T$ with $[B_{n}(x)] = [B_{n}(x')]$ and $[B_{n+1}(x)] \neq [B_{n+1}(x')]$.
By Lemma~\ref{new}, there exist a vertex $y$ on the $(n-2p-1)$-sphere of $x$ and a vertex $y'$ on the $(n-2p-1)$-sphere of $x'$ such that 
$$[B_{2p+2}(y)] \neq [B_{2p+2}(y')] \text{ and } [B_{2p+1}(y)] = [B_{2p+1}(y')].$$
Let us denote by $f : B_n(x) \to B_n(x')$ a color-preserving isomorphism.
Then $f(K) = K$ since $[K]$ appears only once. Note that $d(K,y') > 2p+2$, $d(K, y) > 2p +2$. 
It follows that $B_{2p+2}(y'), B_{2p+2}(y) \subset T - K =\bigcup T_i$, say
$B_{2p+2}(y) \subset T_1$, $B_{2p+2}(y') \subset T_2$.
Thus colored $p$-balls contained in $B_{2p+2}(y)$ and $B_{2p+2}(y')$ are colored $p$-balls contained in $T_1$, $T_2$, respectively.

Let $X_i = \G_i \bs \bs T$ where $\G_i$ be the group of automorphisms of $T$ leaving $\phi_i$ invariant.
Since $|VX_1 |, |VX_2 | \le p+2$, $X_1, X_2$ have diameter at most $p+2$. It follows that $B_{2p+1}(y)$ and $B_{2p+1}(y')$ contain all the colored $p$-balls of $T_1$, $T_2$, respectively.
Thus, $[B_{2p+1}(y)] = [B_{2p+1}(y')]$ implies that $\phi_1$ and $\phi_2$ are isomorphic.
We conclude that $[B_{2p+2}(y)] = [B_{2p+2}(y')]$, which is a contradiction.

If $\phi$ is periodic, then $\phi$ is clearly of bounded type.
\end{proof}

\begin{remark} We allow $\Gamma$ to act with inversions (i.e. without automorphisms fixing an edge and exchanging initial and terminal vertices). The resulting graph of groups will be the usual graph of groups (for groups acting without inversions) of the first barycentric subdivision $T'$ of $T$. 

For edges with initial and terminal vertices of different classes, there is no color-preserving inversion for them.
If there exists an edge with initial and terminal vertices of the same class, then there exists a color-preserving inversion exchanging initial and terminal vertices, which results in an edge in $T'$, corresponding to a half-edge in $T$. This edge will be drawn as a half-edge in the quotient graph, with one white vertex and one black vertex. We will call it a \textit{loop} since it is an edge whose initial vertex is its terminal vertex.

The black vertices in the next figure are the vertices of $T$, whereas the white vertex at the left end of the ray is a vertex in $T'-T$, which was added during the barycentric subdivision. We will omit other white vertices if the indices of edges around it are all 1.
\end{remark}

Let us give an example of non-eventually periodic coloring of bounded type. We will study such colorings with minimal subword complexity in Section~\ref{subsec:sturmianbounded}.

\begin{example} (A non-eventually periodic coloring of bounded type)

Consider a coloring of an edge-indexed graph given as follows:

\medskip
\begin{center}
\begin{tikzpicture}[every loop/.style={}]
  \tikzstyle{every node}=[inner sep=-1pt]
  \node (-1) at (-.5,0) {$\circ$};
  \node (0) {$\bullet$} node [below=4pt] {$b$};
  \node (1) at (1,0) {$\bullet$} node [below=4pt] at (1,0) {$a$};
  \node (2) at (2,0) {$\bullet$} node [below=4pt] at (2,0) {$a$};
  \node (3) at (3,0) {$\bullet$} node [below=4pt] at (3,0) {$a$};
  \node (4) at (4,0) {$\bullet$} node [below=4pt] at (4,0) {$a$};
  \node (5) at (5,0) {$\bullet$} node [below=4pt] at (5,0) {$a$};
  \node (6) at (6,0) {$\bullet$} node [below=4pt] at (6,0) {$a$};
  \node (7) at (7,0) {$\bullet$} node [below=4pt] at (7,0) {$a$};
  \node (8) at (8,0) {$\bullet$} node [below=4pt] at (8,0) {$a$};
  \node (9) at (9,0) {$\cdots$};

  \path[-] (-1) edge node [above=4pt] {\ 2} (0)    
		 (0) edge node [above=4pt] {1 \quad 1} (1)
		 (1) edge node [above=4pt] {2 \quad 1} (2)
		 (2) edge node [above=4pt] {2 \quad 1} (3)
		 (3) edge node [above=4pt] {2 \quad 1} (4)
		 (4) edge node [above=4pt] {2 \quad 1} (5)
		 (5) edge node [above=4pt] {2 \quad 1} (6)
		 (6) edge node [above=4pt] {2 \quad 1} (7)
		 (7) edge node [above=4pt] {2 \quad 1} (8)
		 (8) edge node [above=4pt] {2 \quad \ } (9);
\end{tikzpicture}
\end{center}
Since the vertex colored by $b$ has the empty set as its type, it is of bounded type.
The universal covering tree $T$ has all vertices colored by $a$ except one geodesic $\mathbf{p}$ which is colored by $b$'s.

Admissible colored balls are the ones with vertices all colored by $a$ except vertices on $\mathbf{p}$ if there is a non-trivial intersection. These colored $n$-balls are determined (up to automorphisms of colored balls) by the distance $d$ ($d\in \{0, \cdots n\}$) of $\mathbf{p}$ from the center of the ball. For each $d$, there are infinitely many vertices of distance $d$ from $\mathbf{p}$, thus every class of colored $n$-balls appear infinitely many times.  
Thus it is not eventually periodic by Lemma~\ref{lemma:2}.
Remark that
$b(n) = n+2.$

\end{example}

\section{minimal subword complexity and Sturmian colorings}\label{sec3}

In this section, we study Sturmian colorings, i.e. colorings of minimal unbounded subword complexity.
We show the main theorem stated in the introduction.

\begin{definition}
A coloring $\phi$ of a $k$-regular tree $T$ is called Sturmian if  $b_\phi(n)=n+2$.
\end{definition}
 
Sturmian colorings are the colorings with smallest subword complexity among all non-periodic colorings: since $b_\phi(0)=2$, the strictly increasing condition $b_\phi(n+1) > b_\phi(n)$ implies that $b_\phi(n) \geq n+2$.
Note also that from $b_\phi(0)=2$, the coloring $\phi$ is on two letters.
 
Since $b(n+1) =b(n)+1$, there is exactly one coloring of $k$-ball with two possible extensions to colorings of $(k+1)$-balls.
Therefore, for each $n \ge 0$, there exists a unique special $n$-ball.

\subsection{Sturmian colorings of bounded type}\label{subsec:sturmianbounded}

Let $\phi$ be a coloring of bounded type. For general such $\phi$,
there might be two vertices, with the same maximal type, which are centers of distinct colored $\ell$-balls for some $\ell$.
However, if we assume that $b(n) = n+2$, then there are no such vertices, i.e. 
the colored balls around each vertex $v$ are completely determined by the maximal type of $v$:

\begin{proposition}\label{maximal_type}
For a Sturmian coloring,
if two vertices $x$ and $y$ have the same maximal type, then $x$ and $y$ are in the same class. 
\end{proposition}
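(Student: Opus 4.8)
The plan is to show that the maximal type $\tau(x)$ determines the isomorphism class $[B_n(x)]$ for every $n$, by induction on $n$, starting from the defining property of a Sturmian coloring that there is a \emph{unique} special $n$-ball for each $n \geq 0$. Suppose $\tau(x) = \tau(y) = m$. First I would observe that for $n \leq m$ there is nothing subtle happening at the very bottom: $[B_0(x)]$ and $[B_0(y)]$ are both colored points, and since the coloring is on two letters, either they agree or they are the two distinct colors. The key reduction is to prove that if $[B_n(x)] = [B_n(y)]$ for some $n \geq m+1$, then $x$ and $y$ are in the same class; this is immediate from Lemma~\ref{lem:basic} once we know $[B_{m+1}(x)] = [B_{m+1}(y)]$, so the whole problem collapses to showing $[B_{m+1}(x)] = [B_{m+1}(y)]$.

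To get $[B_{m+1}(x)] = [B_{m+1}(y)]$ I would argue by downward reasoning using uniqueness of special balls. For each $\ell$ with $0 \le \ell \le m$, the ball $B_\ell$ centered at $x$ (resp.\ $y$) is either special or not. If it is not special, it has a unique extension to an $(\ell+1)$-ball, so $[B_{\ell+1}(x)]$ is forced by $[B_\ell(x)]$. If $[B_\ell(x)]$ \emph{is} special, then $\ell \in \Lambda_x$, so $\ell \le m$; and because the coloring is Sturmian there is only one special $\ell$-ball, hence $[B_\ell(x)] = [B_\ell(y)]$ whenever $[B_\ell(y)]$ is also special. The delicate point is that even at a special $\ell$-ball the \emph{set} of possible $(\ell+1)$-extensions has exactly two elements (since $b(\ell+1) = b(\ell)+1$), so I still need to rule out that $x$ and $y$ pick different extensions at some common special level $\ell < m+1$. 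This is where I would use the hypothesis $\tau(x) = \tau(y) = m$ rather than merely $\tau(x), \tau(y) \le m$: if at the largest common special level $\ell$ the two centers chose different extensions, I would trace the consequence up to level $m+1$ and show that one of $x,y$ would then fail to have $m$ in its type set, or would have a special ball above level $m$, contradicting that $m$ is the \emph{maximal} type of both. Concretely, combining Lemma~\ref{new} (every special $n$-ball contains a special $l$-ball for all $l<n$, and the nesting of disagreements it provides) with the uniqueness of special $\ell$-balls should force the chain of extensions of $x$ and of $y$ to coincide level by level from $0$ up to $m+1$.

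I expect the main obstacle to be exactly the bookkeeping at the special levels: showing that two vertices of equal maximal type cannot ``branch apart'' at a shared special ball. The cleanest way I can see to handle it is to induct on $\ell$ and prove the stronger statement that for every $\ell \le m+1$, $[B_\ell(x)] = [B_\ell(y)]$, using at each special step that the unique special $\ell$-ball has its two extensions $[B_{\ell+1}']$ and $[B_{\ell+1}'']$ distinguished by some branch, and that choosing the ``wrong'' one would, by Lemma~\ref{lem:1}(3) and Lemma~\ref{new}, propagate a special ball of type exceeding $m$ into the neighborhood of whichever vertex made that choice. Once $[B_{m+1}(x)] = [B_{m+1}(y)]$ is established, Lemma~\ref{lem:basic} applied with the bound $\tau(x) \le m$ finishes the proof: $x$ and $y$ lie in the same class. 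I would also note the easy edge case $\tau(x) = \tau(y) = -1$ (empty type sets), where Lemma~\ref{lem:basic} with $m = -1$, i.e.\ $[B_0(x)] = [B_0(y)]$, applies directly, and $[B_0(x)] = [B_0(y)]$ holds because there is a unique special $0$-ball hence only the non-special $0$-balls can occur at centers of maximal type $-1$, and in a Sturmian coloring there is exactly one such.
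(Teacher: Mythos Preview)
Your reduction is correct up to a point: you correctly observe that since $m=\tau(x)=\tau(y)\in\Lambda_x\cap\Lambda_y$, uniqueness of the special $m$-ball gives $[B_m(x)]=[B_m(y)]$, and that by Lemma~\ref{lem:basic} it then suffices to show $[B_{m+1}(x)]=[B_{m+1}(y)]$. This is exactly how the paper begins.

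The gap is in your proposed way to close that last step. Your plan is to argue \emph{locally}: if $x$ and $y$ chose different $(m{+}1)$-extensions of the common special $m$-ball, then ``by Lemma~\ref{lem:1}(3) and Lemma~\ref{new}'' a special ball of level exceeding $m$ would be forced near $x$ or $y$. But Lemma~\ref{new} only propagates specialness \emph{downward} (a special $n$-ball contains special $l$-balls for $l<n$), never upward, and Lemma~\ref{lem:1}(3) gives no leverage here either. In fact the configuration $[B_m(x)]=[B_m(y)]$ special, $[B_{m+1}(x)]\neq[B_{m+1}(y)]$ both non-special, and all higher balls of $x$ and $y$ non-special, is perfectly consistent with $\tau(x)=\tau(y)=m$; there is no local contradiction to find. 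Your induction on $\ell$ stalls precisely at $\ell=m$, and the tools you invoke cannot complete it.

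The paper's proof supplies the missing global ingredient. Assuming $[B_{m+1}(x)]\neq[B_{m+1}(y)]$, it uses Lemma~\ref{lem:type2}: every vertex $w$ has some $N$-ball containing a copy of the special $m$-ball, and the center of that copy must (by Lemma~\ref{lem:basic}) be in the class of $x$ or of $y$, since its $(m{+}1)$-ball is one of the two extensions. Hence the whole tree is covered by $N$-balls centered at vertices equivalent to $x$ or $y$, so every maximal type is bounded by $L=\max\{\tau(v):v\in B_N(x)\cup B_N(y)\}<\infty$. This contradicts the existence of special balls of arbitrarily large level. The key idea you are missing is this passage from the local branching at level $m$ to a uniform bound on \emph{all} maximal types via Lemma~\ref{lem:type2}.
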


\begin{proof}
Suppose that two vertices $x$ and $y$, not in the same class, have the same maximal type $\ell$.
Then by the uniqueness of the $\ell$-special ball we have 
$[B_\ell(x)] = [B_\ell(y)]$. By Lemma~\ref{lem:basic}, we have $[B_{\ell+1}(x)] \ne [B_{\ell+1}(y)]$.
 
By Lemma~\ref{lem:type2}, for every vertex $w$, there exists $N>0$ such that $B_{\ell+N}(w)$ includes the special $\ell$-ball, of center say $z$, i.e., $[B_\ell(z)]=[B_\ell(x)]$. Thus $[B_{\ell+1}(z)]$ is either $[B_{\ell+1}(x)]$ or $[B_{\ell+1}(y)]$ since there are only two possible extensions of $[B_\ell(x)]$ to colored $(\ell+1)$-balls. 
Therefore, by Lemma~\ref{lem:basic}, $z$ is in the same class as either $x$ or $y$.
Since $w$ is arbitrary, the whole tree is covered by $N$-balls of centers which are in the same class with either $x$ or $y$.

Let $L$ be the maximum of the maximal types of the vertices in $B_N (x)$ and $B_N (y)$.
By Lemma~\ref{lem:bounded},  $L$ is finite.
By definition, the maximal type of any vertex $w$ is bounded by $L$,
which leads to a contradiction to the fact that there exists a vertex of arbitrary large maximal type since special $m$-ball exists for every $m$. 
\end{proof}

It follows from Proposition~\ref{maximal_type} that the projection $\pi : VT \to VX=VT/\sim$, where $v \sim w$ if they have the same maximal type, is well-defined and extends to a graph morphism $T \to X=T/\sim$, again denoted by $\pi$.
We have a quotient graph $X$ and a coloring $\phi_X$ on $X$ such that
$\phi = \phi_X \circ \pi$.
Note that the vertices of $X$ are determined by their maximal type.
The following lemma shows what the admissible edges of $X$ are, in terms of their maximal type:

\begin{lemma}\label{edge}
In a Sturmian coloring, if a vertex $v$ is of maximal type $m$, then 
\begin{enumerate}
\item its neighboring vertices are of maximal type $m-1$, $m$ or $m+1$,
\item one of its neighboring vertex is of maximal type $m+1$.
\item if $m$ is not minimum among maximal types of vertices, one of its neighboring vertex is of maximal type $m-1$.
\end{enumerate}
\end{lemma}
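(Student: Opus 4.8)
The plan is to analyze how the special balls behave along a single edge, using the fact that for a Sturmian coloring there is exactly one special $n$-ball for each $n$, together with Lemma~\ref{new} and Lemma~\ref{lem:basic}. Let $v$ have maximal type $m$, and let $w$ be a neighbor of $v$. First I would establish part (1). The upper bound $\tau(w) \le m+1$: suppose $\tau(w) \ge m+2$, so there is $w'$ with $[B_{m+2}(w)] = [B_{m+2}(w')]$ but $[B_{m+3}(w)] \ne [B_{m+3}(w')]$ (using uniqueness of the special $(m+2)$-ball to get equality). Since $v$ lies in the $1$-sphere of $w$ and hence $B_{m+1}(v) \subset B_{m+2}(w)$, the isomorphism $B_{m+2}(w) \to B_{m+2}(w')$ carries $v$ to some $v'$ with $[B_{m+1}(v)] = [B_{m+1}(v')]$; by Lemma~\ref{lem:basic} and $\tau(v) = m$, $v$ and $v'$ are in the same class. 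A color-preserving automorphism $g$ with $g(v') = v$ sends $w'$ to a neighbor $w''$ of $v$ with $[B_{m+2}(w)] = [B_{m+2}(w'')]$ and $[B_{m+3}(w)] \ne [B_{m+3}(w'')]$; this forces $m+2 \in \Lambda_w$, but I must now propagate a contradiction about $v$ itself — the cleaner route is: apply Lemma~\ref{new} to $w, w'$ at level $m+2$ to descend one step toward $v$, landing on neighbors of $w$ (one of them $v$, by the structure of the descent) that are centers of a special ball of level $\ge m+1$, contradicting $\tau(v)=m$. The symmetric lower bound $\tau(w) \ge m-1$ follows the same way with the roles reversed, using that $v$ realizes a special $m$-ball and $w$ lies adjacent.

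For part (2), I would argue by contradiction: suppose every neighbor of $v$ has maximal type $\le m$. Combined with part (1), all neighbors have maximal type in $\{m-1,m\}$. I want to conclude that the type set is bounded above globally, contradicting the existence of a special $\ell$-ball for every $\ell$ (which holds for Sturmian colorings). The mechanism is Lemma~\ref{new}: the center of the special $(m+1)$-ball, call it $z$, satisfies $z$ has maximal type $\ge m+1$; by Proposition~\ref{maximal_type} $z$ is in the same class as $v$ (if $\tau(z) = m$... no — here I use that a special $(m+1)$-ball forces its center to have maximal type at least $m+1$). So I instead run the spreading argument: by Lemma~\ref{lem:type2} every vertex sits within a bounded distance $N$ of the special $\ell$-ball for each fixed $\ell$; but the key point is more local. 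The right statement: if $\tau(v) = m$ and the special $(m+1)$-ball has center $z$, then by Lemma~\ref{new} applied to the pair realizing speciality at level $m+1$, descending $m+1$ steps, one reaches a special $0$-ball, and climbing back up the path from $z$ shows each vertex along a geodesic has maximal type increasing by at most $1$ per step; so some neighbor of $v$ on the geodesic from $v$ toward $z$ has maximal type $\ge m+1$. Part (3) is the mirror image: if $m$ is not the minimum, pick a vertex $u$ of smaller maximal type, take a geodesic from $v$ to $u$; by part (1) maximal type changes by at most $1$ along it, so the first step strictly decreases, giving a neighbor of type $m-1$.

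The main obstacle I anticipate is making the descent/ascent bookkeeping in Lemma~\ref{new} precise enough to pin down that the relevant neighbor on which the type jumps is genuinely a neighbor of $v$ (not merely some vertex at distance one from $w$), and correctly tracking which of the two extensions $[B_{m+1}(z)]$ the nearby special ball takes — this is where Proposition~\ref{maximal_type} and the uniqueness of the special $n$-ball must be combined carefully. Once the statement ``along any geodesic, consecutive maximal types differ by at most $1$, and the maximal type strictly increases toward the center of any sufficiently high special ball'' is isolated as the core claim, parts (1)–(3) all follow. I would therefore structure the write-up by first proving part (1) directly from Lemma~\ref{new} and Lemma~\ref{lem:basic}, then deriving parts (2) and (3) from part (1) plus the existence of arbitrarily high special balls and the uniqueness of each special $n$-ball.
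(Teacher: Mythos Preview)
Your overall architecture matches the paper's: prove (1) directly, then derive (2) and (3) from (1) via an intermediate-value argument along a geodesic. But there are two genuine gaps, one in each half.

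\textbf{Part (1).} You assume $\tau(w)\ge m+2$ and then work at level $m+2$; but $\tau(w)\ge m+2$ only guarantees \emph{some} $n\ge m+2$ lies in $\Lambda_w$, not that $m+2$ does. Replace $m+2$ by $\tau=\tau(w)$ throughout. More seriously, after you pull $w'$ back to a neighbor $w''$ of $v$ with $[B_\tau(w)]=[B_\tau(w'')]$ and $[B_{\tau+1}(w)]\ne[B_{\tau+1}(w'')]$, you correctly sense you have not yet contradicted anything about $v$. The paper's missing move is this: choose $w$ at the outset to have \emph{maximal} $\tau$ among all neighbors of $v$. Then $w''$ is a neighbor of $v$ with $\tau(w'')\ge\tau$, hence $\tau(w'')=\tau(w)$ by maximality; but $w,w''$ are not in the same class (their $(\tau+1)$-balls differ), contradicting Proposition~\ref{maximal_type}. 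Your ``cleaner route'' via Lemma~\ref{new} does not work: the descent in that lemma lands on \emph{some} vertex of the $1$-sphere of $w$, with no reason for it to be $v$.

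\textbf{Parts (2) and (3).} The intermediate-value step is right: along a geodesic from $v$ to a vertex of type $>m$ (resp.\ $<m$), part (1) forces the type sequence to cross from $m$ to $m+1$ (resp.\ $m-1$) at some edge. But this only produces a vertex $v'$ of type $m$ with a neighbor of type $m\pm1$; it does \emph{not} say anything about the first step out of $v$. Your sentence ``so the first step strictly decreases'' in (3) is simply false. The paper closes the gap with Proposition~\ref{maximal_type}: since $\tau(v')=\tau(v)=m$, the vertices $v$ and $v'$ are in the same class, hence have isomorphic colored neighborhoods, so $v$ also has a neighbor of type $m\pm1$. You mention Proposition~\ref{maximal_type} in passing but never deploy it here; it is the essential transfer step.
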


\begin{proof}
(1) Let $\tau(x) = m$. We claim that its neighboring vertices are of maximal type at most $m+1$. By claim, if a neighboring vertex is of maximal type $l < m-1$, then $\tau(x) \leq l+1 < m$, which is a contradiction.

Now let us prove the claim.
Suppose $\tau(x_i) = \tau > m+1$. 
Choose $x_i$ such that $\tau(x_i) = \max \{\tau(x_j)\}$. 
Since $\tau(x_i) = \tau$, there exists $y$ such that $[B_\tau(x_i)] =[B_\tau(y)]$ and $[B_{\tau+1}(x_i)] \neq [B_{\tau+1}(y)]$. Let $f : B_\tau (x_i) \to B_\tau(y)$ be a color-preserving isomorphism. Then $[B_{m+1}(x)] =[B_{m+1}(f(x))]$ since $m+1 < \tau$. 
By Lemma~\ref{lem:basic}, $x$ and $f(x)$ are in the same class since $\tau(x) = m$. Since $d(y, f(x))=1$, there exists some $j$ such that $y$, $x_j$ are in the same class. Therefore, $[B_\tau(x_i) ]=[ B_\tau(y)]=[B_\tau(x_j)]$ but $[B_{\tau+1}(x_i)]  \neq [ B_{\tau+1}(y)] = [B_{\tau+1}(x_j)]$. Hence, $x_i$ and $x_j$ are not in the same class and $\tau(x_j) \ge \tau = \tau(x_i)$. 
Since $\tau(x_i) = \max \{\tau(x_j)\}$, $\tau(x_j) = \tau(x_i)$, which contradicts Proposition~\ref{maximal_type}.

(2) Since $b(n)$ is not bounded, there is a vertex $w$ whose maximal type is larger than $m$.
Since the maximal type of neighboring vertices may differ by at most one by part (1), there is a vertex $w'$ in the path between $v$ and $w$, of maximal type $(m+1)$, neighboring a vertex $v'$ of maximal type $m$. 
Since $v,v'$ have the same maximal type, by Proposition~\ref{maximal_type}, $v$ and $v'$ are equivalent, thus have the same neighborhoods. Since $v'$ neighbors a vertex of maximal type $m+1$, so does $v$.

(3) If $m$ is not minimal, there exists a vertex $w$ whose maximal type is smaller than $m$. The statement of (3) follows from an argument similar to the previous paragraph.
\end{proof}

Now we have the following theorem by Proposition~\ref{maximal_type} and Lemma~\ref{edge}:

\begin{theorem}\label{prop:5}
If $\phi$ is a Sturmian coloring,
then there exists a proper infinite quotient graph $X$ of $T$ with 
$$VX =\{ m,m+1, m+2, \dots , \}, \quad  EX \subset \{ [i,i+1], [i+1, i] \, | \, i \ge m \} \cup \{ [i,i] \, | \, i \ge m \} $$ 
and a coloring $\phi_X$ on $X$ such that
$\phi = \phi_X \circ \pi$, where $\pi: T \to X$ is the canonical quotient map
and $m = \min \{ \tau(x) : x \in VT\}$.
\end{theorem}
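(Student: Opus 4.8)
The plan is to assemble Theorem~\ref{prop:5} directly from the two results just established, Proposition~\ref{maximal_type} and Lemma~\ref{edge}, treating the theorem essentially as a packaging statement. First I would recall that since $\phi$ is Sturmian it is of bounded type (indeed a unique special $n$-ball exists for every $n$, hence $\Lambda_x$ is finite for every $x$ once we know no vertex can have arbitrarily large type in its ball — but more directly, the type set of a fixed vertex is finite because the $(n)$-balls around it eventually stabilize; alternatively this is already implicit in the discussion preceding Proposition~\ref{maximal_type}). So every vertex $x$ has a well-defined maximal type $\tau(x)$, and $m := \min\{\tau(x) : x \in VT\}$ is well-defined as a nonnegative integer (it is $\ge 0$ since $b_\phi(0) = 2 > 1$ forces the $0$-ball to be special).

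Next I would set $VX := VT/\!\sim$ with $v \sim w$ iff $\tau(v) = \tau(w)$, and observe that by Proposition~\ref{maximal_type} the equivalence class of $v$ is exactly its class under color-preserving automorphisms, so the map $\pi : VT \to VX$ descends to a genuine graph quotient: an edge $[v,w]$ of $T$ maps to the pair $\{[\pi(v)], [\pi(w)]\}$, and I need that this is well-defined on classes, which follows because equivalent vertices have isomorphic (colored) neighborhoods, hence the multiset of maximal types of neighbors is a class invariant. This gives the graph $X = T/\!\sim$ together with $\phi_X$ defined by $\phi_X([\pi(v)]) = \phi(v)$ (well-defined since equivalent vertices have the same color), and $\phi = \phi_X \circ \pi$ by construction.

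Then I would identify $VX$ with a set of integers via the maximal type itself: the vertex class of maximal type $j$ is labelled $j$. Lemma~\ref{edge}(1) says neighboring maximal types differ by at most one, so every edge of $X$ joins $i$ to $i-1$, $i$, or $i+1$; hence $EX \subset \{[i,i+1],[i+1,i] : i\}\cup\{[i,i] : i\}$. Lemma~\ref{edge}(2) guarantees each class $i$ has a neighbor of type $i+1$, so the set of realized types has no upper bound; combined with Lemma~\ref{edge}(3) (each non-minimal $i$ has a neighbor of type $i-1$) and the connectedness of $X$ (image of the connected $T$), the set of realized maximal types is exactly $\{m, m+1, m+2, \dots\}$, so $VX = \{m, m+1, \dots\}$. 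Finally $X$ is infinite (it has infinitely many vertices) and is a proper quotient of $T$ precisely because $\phi$ is non-periodic: a $k$-regular tree mapping onto a finite graph would force $b_\phi$ bounded by Theorem~\ref{thm:periodic}, contradicting $b_\phi(n)=n+2$; and $X \ne T$ because, e.g., by Lemma~\ref{edge}(2) the vertex of type $m$ and the vertex of type $m+1$ in the ray are each identified from infinitely many vertices of $T$ (or simply: $|VX| = \aleph_0 < |VT|$).

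I do not expect a genuine obstacle here — the content is entirely in the two preceding results — but the one point requiring a little care is the well-definedness of the edge set of the quotient, i.e.\ that $\pi$ really is a graph morphism and not merely a vertex map; this is where Proposition~\ref{maximal_type} is used in its full strength, since it lets one transport the adjacency structure through the identification. A second minor point is justifying $VX = \{m, m+1, \dots\}$ rather than some sparser subset: one must combine $(1)$, $(2)$, $(3)$ of Lemma~\ref{edge} with connectedness to rule out gaps, and I would phrase this as: the realized types form a set $S \ni m$ of nonnegative integers, bounded below by $m$, with no upper bound by $(2)$, and $(1)$ forbids a gap (if $i, i+2 \in S$ but $i+1 \notin S$, any path in $X$ from $i$ to $i+2$ would have to pass through $i+1$, contradiction), so $S = \{m, m+1, \dots\}$.
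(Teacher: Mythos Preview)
Your opening move is wrong: you assert that a Sturmian coloring is automatically of bounded type, and the justifications you offer (``a unique special $n$-ball exists for every $n$'', ``the $(n)$-balls around it eventually stabilize'') do not establish this. In fact it is false. Section~3.3 of the paper is devoted to Sturmian colorings of \emph{unbounded} type, and Example~\ref{ex:unbounded1} exhibits one explicitly; for such colorings $\Lambda_x$ is infinite for every vertex $x$ and $\tau(x)$ is undefined. Theorem~\ref{prop:5} sits in Subsection~\ref{subsec:sturmianbounded} (Sturmian colorings of bounded type), and the appearance of $\tau(x)$ in its statement signals that bounded type is a standing hypothesis here, not something to be derived. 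You should simply take it as given.

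Once that is fixed, the remainder of your argument is correct and is precisely what the paper does: the paper's entire proof is the sentence ``by Proposition~\ref{maximal_type} and Lemma~\ref{edge}'', and your unpacking (quotient by maximal type, Proposition~\ref{maximal_type} to identify classes with automorphism classes and hence make $\pi$ a graph morphism, Lemma~\ref{edge}(1) for the edge constraint, parts (2)--(3) plus connectedness for $VX=\{m,m+1,\dots\}$) is exactly the intended content. One small side remark is also off: your claim $m\ge 0$ does not follow from $b_\phi(0)=2$, since only one of the two colors is the special $0$-ball, and a vertex of the other color may have empty type set and hence $\tau=-1$; but the value of $m$ is immaterial to the theorem.
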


\begin{example}
Consider the coloring of edge-indexed graph $X$:

\medskip

\begin{center}
\begin{tikzpicture}[every loop/.style={}]
  \tikzstyle{every node}=[inner sep=0pt]
  \node (0) {$\bullet$} node [below=4pt] {$b$};
  \node (1) at (1,0) {$\bullet$} node [below=4pt] at (1,0) {$a$};
  \node (2) at (2,0) {$\bullet$} node [below=4pt] at (2,0) {$a$};
  \node (3) at (3,0) {$\bullet$} node [below=4pt] at (3,0) {$b$};
  \node (4) at (4,0) {$\bullet$} node [below=4pt] at (4,0) {$a$};
  \node (5) at (5,0) {$\bullet$} node [below=4pt] at (5,0) {$a$};
  \node (6) at (6,0) {$\bullet$} node [below=4pt] at (6,0) {$b$};
  \node (7) at (7,0) {$\bullet$} node [below=4pt] at (7,0) {$a$};
  \node (8) at (8,0) {$\bullet$} node [below=4pt] at (8,0) {$a$};
  \node (9) at (9,0) {$\cdots$};

  \path[-]  (0) edge node [above=4pt] {3 \quad 2} (1)
		 (1) edge node [above=4pt] {1 \quad 2} (2)
		 (2) edge node [above=4pt] {1 \quad 2} (3)
		 (3) edge node [above=4pt] {1 \quad 2} (4)
		 (4) edge node [above=4pt] {1 \quad 2} (5)
		 (5) edge node [above=4pt] {1 \quad 2} (6)
		 (6) edge node [above=4pt] {1 \quad 2} (7)
		 (7) edge node [above=4pt] {1 \quad 2} (8)
		 (8) edge node [above=4pt] {1 \quad \ } (9);
\end{tikzpicture}
\end{center}
Let us denote the vertices of $X$ by $\{0, 1, \cdots \}$.
Denote by $\widetilde x$ a lift of $x$ in $T$.
By the periodicity of the coloring $\phi_0$ on $X$, we have
$[B_n (\widetilde{i})] = [B_n(\widetilde{{i+3}})]$ as long as the balls do not contain 1-neighborhood of $\tilde 0$, i.e. for any $i  \geq n$.
Thus $$ \mathcal B(n) = \{ [B_n (\widetilde{i})] : i = 0, 1, \dots, n+2 \}. $$

To show that this example is Sturmian, we only need to show that for $n \ge 1$,
$$[ B_n (\widetilde{{n-1}})] = [B_n(\widetilde{{n+2}})]. $$
Let $y$ and $z$ be lifts of ${n-1}$ and ${n+2}$, respectively.
For $n \ge 1$, we have $[B_{n-1} (y)] = [B_{n-1} (z)]$ by a graph isomorphism $f$ such that $\pi(f(x)) = \pi(x) + 3$ for $0 \le \pi(x) \le 2n-2$.
Let $x$ be a vertex in $(n-1)$-sphere of $y$.
If $\pi(x) = i$, for $i \ge 1$, then we have $[B_1(x)] = [B_1( f(x) )]$.
If $\pi(x) = 0$, then $[B_1( x )] =  
\includegraphics{Sturmian_tree-12.mps} = [B_1(f(x))] $ since $\pi(f(x)) = 3$.
Hence, by Lemma~\ref{new} we have $ [B_{n} (y)] = [B_{n} (z)]. $

Lift this coloring to the universal covering tree $T$. See Figure~\ref{sturmfig}.
The admissible colored balls are as follows:  
\begin{align*}
\mathcal B(1) &= \Bigg \{ 
 \;\;\;
\includegraphics{Sturmian_tree-12.mps}, \;\;\;
\includegraphics{Sturmian_tree-13.mps}, \;\;\;
\includegraphics{Sturmian_tree-14.mps} \;\;\; 
\Bigg \}, \\
\mathcal B(2) &= \Bigg \{  \;\;\; \includegraphics{Sturmian_tree-15.mps}, \;\;\; \includegraphics{Sturmian_tree-16.mps}  \;\;\;
\includegraphics{Sturmian_tree-17.mps}, \;\;\;
\includegraphics{Sturmian_tree-18.mps}  \;\;\; 
 \Bigg \}.
\end{align*}

\begin{figure}
\begin{center}
\includegraphics{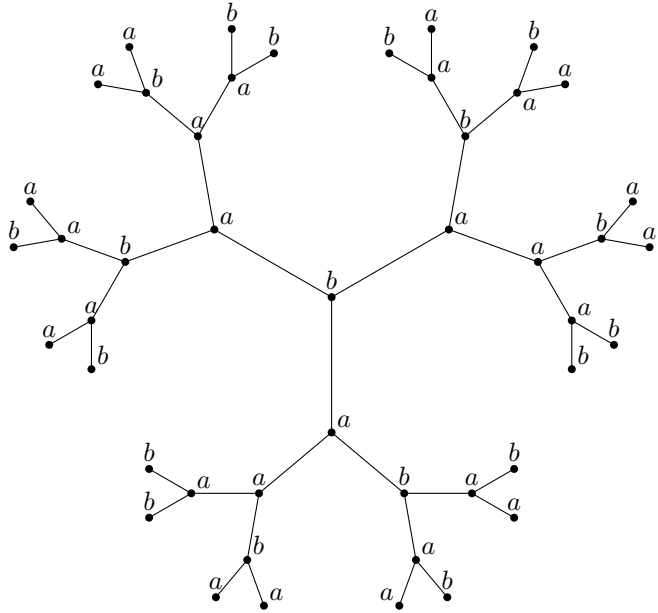}
\end{center}
\caption{An example of Sturmian tree}
\label{sturmfig}
\end{figure}

\end{example}

Here are more examples of Sturmian colorings which are liftings of one-sided periodic colorings on an infinite quotient ray of $T$.

\medskip

\begin{center}
\begin{tikzpicture}[every loop/.style={}]
  \tikzstyle{every node}=[inner sep=-1pt]
  \node (0) {$\bullet$} node [below=4pt] {$b$};
  \node (1) at (1,0) {$\bullet$} node [below=4pt] at (1,0) {$a$};
  \node (2) at (2,0) {$\bullet$} node [below=4pt] at (2,0) {$a$};
  \node (3) at (3,0) {$\bullet$} node [below=4pt] at (3,0) {$b$};
  \node (4) at (4,0) {$\bullet$} node [below=4pt] at (4,0) {$a$};
  \node (5) at (5,0) {$\bullet$} node [below=4pt] at (5,0) {$a$};
  \node (6) at (6,0) {$\bullet$} node [below=4pt] at (6,0) {$b$};
  \node (7) at (7,0) {$\bullet$} node [below=4pt] at (7,0) {$a$};
  \node (8) at (8,0) {$\bullet$} node [below=4pt] at (8,0) {$a$};
  \node (9) at (9,0) {$\cdots$};

  \path[-] (0) edge node [above=4pt] {3 \quad 2} (1)
		 (1) edge node [above=4pt] {1 \quad 1} (2)
		 (2) edge node [above=4pt] {2 \quad 2} (3)
		 (3) edge node [above=4pt] {1 \quad 1} (4)
		 (4) edge node [above=4pt] {2 \quad 2} (5)
		 (5) edge node [above=4pt] {1 \quad 1} (6)
		 (6) edge node [above=4pt] {2 \quad 2} (7)
		 (7) edge node [above=4pt] {1 \quad 1} (8)
		 (8) edge node [above=4pt] {2 \quad \ } (9);
\end{tikzpicture}
\end{center}

\medskip

\begin{center}
\begin{tikzpicture}[every loop/.style={}]
  \tikzstyle{every node}=[inner sep=-1pt]
  \node (-1) at (-.5,0) {$\circ$};
  \node (0) {$\bullet$} node [below=4pt] {$a$};
  \node (1) at (1,0) {$\bullet$} node [below=4pt] at (1,0) {$b$};
  \node (2) at (2,0) {$\bullet$} node [below=4pt] at (2,0) {$a$};
  \node (3) at (3,0) {$\bullet$} node [below=4pt] at (3,0) {$b$};
  \node (4) at (4,0) {$\bullet$} node [below=4pt] at (4,0) {$a$};
  \node (5) at (5,0) {$\bullet$} node [below=4pt] at (5,0) {$b$};
  \node (6) at (6,0) {$\bullet$} node [below=4pt] at (6,0) {$a$};
  \node (7) at (7,0) {$\bullet$} node [below=4pt] at (7,0) {$b$};
  \node (8) at (8,0) {$\bullet$} node [below=4pt] at (8,0) {$a$};
  \node (9) at (9,0) {$\cdots$};

  \path[-] (-1) edge node [above=4pt] {\ 1} (0)    
		 (0) edge node [above=4pt] {2 \quad 2} (1)
		 (1) edge node [above=4pt] {1 \quad 2} (2)
		 (2) edge node [above=4pt] {1 \quad 2} (3)
		 (3) edge node [above=4pt] {1 \quad 2} (4)
		 (4) edge node [above=4pt] {1 \quad 2} (5)
		 (5) edge node [above=4pt] {1 \quad 2} (6)
		 (6) edge node [above=4pt] {1 \quad 2} (7)
		 (7) edge node [above=4pt] {1 \quad 2} (8)
		 (8) edge node [above=4pt] {1 \quad \ } (9);
\end{tikzpicture}
\end{center}

The following example indicates that the coloring on the quotient graph $X$ can be quite arbitrary.

\begin{example}
Let $X$ be the following infinite edge-indexed colored graph: 

\medskip
\begin{center}
\begin{tikzpicture}[every loop/.style={}]
  \tikzstyle{every node}=[inner sep=-1pt]
  \node (1) at (-3,0) {$\bullet$} node [below=8pt] at (-3,0) {$a$};
  \node (2) at (-2,0) {$\bullet$} node [below=8pt] at (-2,0) {$b$};
  \node (a2) at (-2,.5) {$\circ$};
  \node (3) at (-1,0) {$\bullet$} node [below=8pt] at (-1,0) {$a$};
  \node (4) at (0,0) {$\bullet$} node [below=8pt] at (0,0) {$a$};
  \node (5) at (1,0) {$\bullet$} node [below=8pt] at (1,0) {$b$};
  \node (a5) at (1,.5) {$\circ$};
  \node (6) at  (2,0) {$\bullet$} node [below=8pt] at (2,0) {$a$};
  \node (a6) at (2,.5) {$\circ$};
  \node (7) at  (3,0) {$\bullet$} node [below=8pt] at (3,0) {$b$};
  \node (a7) at (3,.5) {$\circ$};
  \node (8) at (4,0) {$\bullet$} node [below=8pt] at (4,0) {$a$};
  \node (9) at (5,0) {$\bullet$} node [below=8pt] at (5,0) {$a$};
  \node (10) at (6,0) {$\bullet$} node [below=8pt] at (6,0) {$b$};
  \node (a10) at (6,.5) {$\circ$};
  \node (11) at (7,0) {$\cdots$};

  \path[-] 
		 (1)  edge node [below=2pt] {$3 \ \ 1$} (2)
		 (2)  edge node [below=2pt] {$1 \ \ 2$} (3)
		 (3)  edge node [below=2pt] {$1 \ \ 1$} (4)
		 (4)  edge node [below=2pt] {$2 \ \ 1$} (5)
		 (5)  edge node [below=2pt] {$1 \ \ 1$} (6)
		 (6)  edge node [below=2pt] {$1 \ \ 1$} (7)
		 (7)  edge node [below=2pt] {$1 \ \ 2$} (8)
		 (8) edge node [below=2pt] {$1 \ \ 1$} (9) 
		 (9) edge node [below=2pt] {$2 \ \ 1$} (10) 
		 (10) edge node [below=2pt] {$1 \ \ \ $} (11) 
		 (2)  edge node [right=3pt] {$1$} (a2)    
		 (5)  edge node [right=3pt] {$1$} (a5)    
		 (6)  edge node [right=3pt] {$1$} (a6)    
		 (7)  edge node [right=3pt] {$1$} (a7) 
		 (10)  edge node [right=3pt] {$1$} (a10)    ;  
\end{tikzpicture}
\end{center}

Suppose that $b-a-a-b$ and $b-a-b$ appear arbitrarily in $X$. Then it is not a periodic coloring on $X$.
The minimum of maximal types is $m=0$.  Each vertex in $X$ represents vertices of maximal type $\{ 0 , 1 , 2, \dots \}$, which is the distance of the vertex from the leftmost vertex.

Note that we colored $X$ so that the balls of center colored $a$ or $b$ not intersecting the leftmost vertex is uniquely determined, since all 1-neighborhoods of $a, b$ not intersecting the leftmost vertex are uniquely determined, namely $a$ has neighboring vertices $a,b,b$ and $b$ has neighboring vertices $a,a,b$. 
Therefore, the number of $n$-balls are the balls of center of distance $0, 1, \cdots, n-1$ from the leftmost vertex, plus two balls of center $a$ and $b$, which does not intersect the 1-sphere of the leftmost vertex.
Thus the number of colored $n$-ball is $b(n)=n+2$.
\end{example}

\subsection{Eventually periodic colorings revisited}
In this subsection, we obtain a complete characterization of eventually periodic Sturmian colorings.

\begin{proposition}\label{prop:ep} A Sturmian coloring $\phi$ is eventually periodic if and only if there exists a finite colored subtree which appears exactly once.
\end{proposition}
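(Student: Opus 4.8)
The plan is to prove both directions, using the earlier results on eventually periodic colorings together with the special structure of Sturmian colorings established in Theorem~\ref{prop:5}. The forward direction is essentially free: if $\phi$ is Sturmian it is in particular non-periodic (since $b_\phi(n)=n+2$ is unbounded), so if in addition $\phi$ is eventually periodic, then Lemma~\ref{lemma:2} immediately produces a finite colored subtree appearing exactly once. So the content is in the converse.

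For the converse, suppose $K$ is a finite colored subtree of $T$ appearing exactly once; enlarging $K$ we may assume $K = B_r(v)$ is a ball for some vertex $v$ and some $r$. The key point is that, by Theorem~\ref{prop:5}, $\phi = \phi_X \circ \pi$ where $\pi : T \to X$ is the quotient by the equivalence ``same maximal type,'' and $X$ has vertex set $\{m, m+1, m+2, \dots\}$ with edges only between consecutive integers (plus possible loops). Since $K$ appears exactly once, all the vertices of $K$ have \emph{distinct} special data in a strong enough sense; in particular I would argue that there is a uniform bound $\ell_0$ so that for every vertex $w$ outside a sufficiently large ball $B_R(v)$, the maximal type $\tau(w)$ is large, and moreover the colored ball $[B_n(w)]$ depends only on $\tau(w)$ by Proposition~\ref{maximal_type}. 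The strategy is then: (i) show that $T - B_R(v)$ is a finite union of subtrees $T_i$, each of which maps under $\pi$ to a ``tail'' of $X$, i.e. the subgraph on $\{j, j+1, \dots\}$ for some $j$; (ii) show that the coloring $\phi|_{T_i}$ has a periodic extension. For (ii), the natural candidate periodic extension is obtained by ``folding'' the tail of $X$: since $K$ appears only once, for $j$ large the colored balls $[B_n(\widetilde{j})]$ must eventually repeat with some period $q$ (otherwise one gets infinitely many pairwise non-equivalent $n$-balls for some fixed $n$, contradicting $b_\phi(n) = n+2$ by the same counting argument as in Lemma~\ref{lemma:2}). One then builds a finite edge-indexed graph $X'$ by gluing vertex $j+q$ back to vertex $j$, checks it is a legitimate edge-indexed graph of $T$ (the sum of indices at each vertex is still $k$, which is inherited from $X$), and its universal cover, restricted to the copy of $T_i$, agrees with $\phi$. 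By the discussion preceding Theorem~\ref{thm:periodic}, this lift is a periodic coloring $\phi_i$ with $\phi_i|_{T_i} = \phi|_{T_i}$, so by Definition~\ref{def:event}, $\phi$ is eventually periodic.

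The main obstacle I anticipate is step (i)–(ii) combined: establishing that the colored balls along the quotient ray $X$ become \emph{eventually periodic as a one-sided sequence}. The cleanest route is a Hedlund–Morse-type argument internal to $X$: define, for the (classical) one-sided coloring obtained by reading $\phi_X$ along the ray together with the edge-index data and loop data at each vertex, a subword-complexity function, and show that if $K$ appears exactly once then this one-sided word has bounded classical subword complexity, hence is eventually periodic by \cite{HM}. Concretely, the ball $[B_n(\widetilde j)]$ for $j \ge n$ is determined by the finite stretch of this word from position $j-n$ to $j+n$ (since the branches of the ball that leave the $\pi$-image of the ray are uniquely determined by the local picture, exactly as in the examples); so if infinitely many such stretches were pairwise distinct we would violate $b_\phi(n)=n+2$, and finiteness of the number of distinct length-$(2n+1)$ factors for every $n$ forces eventual periodicity. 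Once the ray-word is eventually periodic, folding it into a finite graph and lifting is routine bookkeeping of edge indices, and the appearance-exactly-once of $K$ guarantees the fold can be done past $K$ without identifying $K$ with anything, so the periodic extensions $\phi_i$ exist and have the required restrictions.

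One caveat to handle carefully: Theorem~\ref{prop:5} is stated for Sturmian colorings \emph{of bounded type}, so in the converse I must also address why a Sturmian coloring with a subtree appearing exactly once is of bounded type. This follows by re-running the proof that eventually periodic colorings are of bounded type, or more directly: if $K$ appears exactly once, then any $[B_n(x)]=[B_n(y)]$ with $n$ large forces the isomorphism to fix $K$ setwise once the balls are big enough to contain $K$, and one deduces as in Lemma~\ref{lem:basic}-type reasoning that $x,y$ are in the same class, bounding the type set of each vertex. With bounded type secured, Theorem~\ref{prop:5} applies and the argument above goes through.
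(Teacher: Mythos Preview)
Your forward direction and the bounded-type reduction are fine: if $K=B_r(v)$ appears exactly once then any color-preserving isomorphism $B_n(v)\to B_n(w)$ with $n>r$ must carry $K$ to itself and hence fix its center, forcing $w=v$; so $\tau(v)\le r$ and Lemma~\ref{lem:bounded} gives bounded type, after which Theorem~\ref{prop:5} is available.

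The real problem is your Hedlund--Morse step for the ray-word. You correctly note that the stretch of edge-indexed data on $\{j-n,\dots,j+n\}$ determines $[B_n(\widetilde j)]$, but this only gives
\[
\#\{\text{distinct }n\text{-balls}\}\ \le\ \#\{\text{distinct length-}(2n{+}1)\text{ stretches}\},
\]
so the bound $b_\phi(n)=n+2$ on the left does not bound the right. To conclude bounded factor complexity you would need the \emph{converse}, that the ball determines the stretch, and this fails already in small cases: for $n=0$ the ball records only the color $\phi_X(j)$ while the stretch also carries the loop and edge indices at $j$; for $n=1$ the ball sees only the \emph{multiset} of neighbor colors and cannot separate $i([j,j{-}1])$ from $i([j,j{+}1])$ when $\phi_X(j{-}1)=\phi_X(j{+}1)$. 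So you have not shown the ray-word has bounded complexity, and the folding step never gets off the ground.

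The paper's argument bypasses the ray picture entirely. It takes the least $m$ for which some colored $(m{+}1)$-ball $[B_{m+1}(y)]$ appears only finitely often; since $[B_m(y)]$ then appears infinitely often it is special, and the Sturmian condition $b(m{+}1)-b(m)=1$ makes the special $m$-ball \emph{unique}, so this finitely-appearing $(m{+}1)$-ball is itself unique. Letting $K$ be the finite hull of its occurrences, every $z\in T\setminus K$ has $[B_{m+1}(z)]$ uniquely determined by $[B_m(z)]$ (either $[B_m(z)]$ is not special, or it is but the bad extension is unavailable outside $K$). This is exactly the hypothesis that drives the construction in Theorem~\ref{thm:periodic}: one builds a finite edge-indexed graph on the vertex set $\B_\phi(m)$ and checks that each component $T_i$ of $T\setminus K$ maps to a component of it, whose universal cover furnishes the periodic extension $\phi_i$. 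The Sturmian hypothesis is used exactly once, to pin down a single obstructing $(m{+}1)$-ball, after which the argument is the periodic one verbatim.
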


\begin{proof} It was proved in Lemma~\ref{lemma:2} that an eventually periodic coloring has a finite colored subtree appearing exactly once. Let us prove the other direction.

Let $B_n(x)$ be a ball containing a finite subtree whose coloring appears only once. Then $[B_n(x)]$ appears only finitely many times. Thus $\B(n)$ contains a colored $n$-ball appearing only finitely many times. Let $m+1 \geq 0$ be a minimal such integer.  

By definition, either $m+1 \geq 1$, i.e. there exists a colored $(m+1)$-ball appearing only finitely many times and all colored $m$-balls appear infinitely many times, or $m+1 =0$, i.e. $\B(0)$ contains a letter $b$ appearing only finitely many times. In the latter case, $T - \{y: \phi(y)=b \}$ is colored by one color, thus $\phi$ is eventually periodic. Assume we are in the former case.

Let $[B_{m+1}(y)]$ be a colored $(m+1)$-ball appearing finitely many times. Since $[B_m(y)]$ appear infinitely many times, $[B_m(y)]$ is a special $m$-ball. Since a special $m$-ball is unique and there are exactly two extensions to colored $(m+1)$-balls, there is no colored $(m+1)$-ball other than $[B_{m+1}(y)]$ appearing finitely many times.

Let $K $ be a minimal subtree containing all $(m+1)$-balls equivalent to $B_{m+1}(y)$. 
We claim that for any $z \in V(T -K)$, $[B_{m+1}(z)]$ is uniquely determined by $[B_m(z)]$. Indeed, if $[B_m(z)]$ is not the special $m$-ball, then there is a unique extension of $[B_m(z)]$ to $(m+1)$-ball. If $[B_m(z)]$ is the special $m$-ball, then 
 $[B_m(z)] = [B_m(y)]$ but $[B_{m+1}(z)] \neq [B_{m+1}(y)]$ since all $(m+1)$-balls equivalent to $B_{m+1}(y)$ are in $K$. Since there are only two possible extensions of $[B_m(z)]$ including $[B_{m+1}(y)]$, $[B_{m+1}(z)]$ is uniquely determined.
 
 Now let us construct an edge-indexed graph using the idea in the proof of Theorem~\ref{thm:periodic}. 
Let $VX = \B(m) $ and
$$EX = \{ [[B_m(z)], [B_m(w)]] : z, w \in VT, [B_{m+1}(z)] \ne [B_{m+1}(y)], d(z, w)=1 \}.$$ 
Let us show that $e \in EX$ if and only if $\bar{e} \in EX$. We claim that if $z, w$ are vertices such that $[B_{m+1}(z)] \neq [B_{m+1}(y)]$, $[B_{m+1}(w)] = [B_{m+1}(y)]$ and $d(z,w)=1$, then there exist vertices $z',w'$ such that $d(z',w')=1$, $([B_m(z)], [B_m(w)]) = ( [B_m(z')], [B_m(w')])$  and $[B_{m+1}(w')] \neq [B_{m+1}(y)]$.

Let $z'$ be a vertex of $[B_{m+1}(z')] = [B_{m+1}(z)] \ne [B_{m+1}(y)]$. 
There exist infinitely many such $z'$'s.
For each $z'$, there exists $w'$'s such that $d(z',w')=1$ and $[B_m(w')]=[B_m(w)]=[B_m(y)]$. 
Thus, there exist infinitely many $w'$'s such that $d(z',w')=1$ for some $z'$ and $[B_m(w')]=[B_m(w)]=[B_m(y)]$. 
Since there exist only finitely many $[B_{m+1}(y)]$,
there exist infinitely many $w'$ such that $d(z',w')=1$ for some $z'$ and $[B_{m+1}(w')] \ne [B_{m+1}(y)]$. The claim follows.

Let 
$$i([[B_m(z)], [B_m(w)]]) = |\{ w' \in VT : d(z,w')=1, [B_m(w')]=[B_m(w)] \} |.$$  
Since $[B_{m+1}(z)]$ is uniquely determined by $[B_m(z)]$, as in the proof of Theorem~\ref{thm:periodic}, $X$ is a well-defined edge-indexed graph, possibly disconnected, of which each connected component $(X_j, i)$ has a universal covering tree isometric to $T$. The classes of $m$-balls with center in each given $T_j$ are connected in $(X,i)$, thus they are included in one single connected component of $X$, say $X_j$. Note that $X_k = X_j$ even if $T_k \neq T_j$. 

Define a coloring $\phi_j$ on $X_j$ to be $\phi_j([B_m(w)]) = \phi(w)$. Then there is a graph homomorphism $\pi_j : T_j \to (X_j,i)$ given by $\pi_j : w \mapsto [B_m(w)]$ such that $\phi_j \circ \pi_j = \phi$ on $T_j$. Note that $\pi_j$ is surjective since $T_j$ contains arbitrarily large balls.
 Since $T$ is a tree, by extending locally, one may extend $\pi_i$ to a covering $\widetilde{\pi_j} : T \to (X_j, i)$. Lift the coloring on $(X_j,i)$ to $T$, which is clearly an extension of $\phi_{T_j}$.
\end{proof}

We remark that $X$ in the proof above is in fact connected.
\begin{lemma}\label{lem:9}
If a Sturmian coloring $\phi$ is eventually periodic, then there are finitely many vertices of any given class.
\end{lemma}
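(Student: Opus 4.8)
The plan is to leverage the structure established in the proof of Proposition~\ref{prop:ep}, together with the relationship between maximal types and classes of vertices coming from Proposition~\ref{maximal_type}. Recall that for a Sturmian coloring, two vertices are in the same class if and only if they have the same maximal type (Proposition~\ref{maximal_type}), and the coloring is of bounded type once it is eventually periodic (by the earlier proposition). So the claim ``finitely many vertices of any given class'' is equivalent to the claim that each value of the maximal type $\tau$ is attained by only finitely many vertices of $T$.

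First I would use Proposition~\ref{prop:ep}: since $\phi$ is eventually periodic, there is a finite colored subtree $\bar K$ appearing exactly once, which we may take to be a ball $B_n(x_0)$, and (following the proof) there is a minimal integer $m+1$ such that $\B(m+1)$ contains a colored $(m+1)$-ball $[B_{m+1}(y)]$ appearing only finitely many times, while every colored $m$-ball appears infinitely often. The key structural fact proved there is that for $z \in V(T-K)$ (where $K$ is the minimal subtree containing all $(m+1)$-balls equivalent to $B_{m+1}(y)$), the colored ball $[B_{m+1}(z)]$ is uniquely determined by $[B_m(z)]$. In particular, outside the finite set $K$, no colored $m$-ball is special, so for every $z \notin K$ and every $\ell \ge m$, $[B_\ell(z)]$ has a unique extension; hence the maximal type of $z$ is at most $m$ for all but finitely many $z$. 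Wait — that would mean $\tau$ is bounded, contradicting the existence of special $\ell$-balls for all $\ell$. The resolution is that $K$ must be infinite, or rather that the special $m$-ball does occur infinitely often but its two extensions split, one of them being $[B_{m+1}(y)]$ which occurs finitely often; so only finitely many centers of the special $m$-ball extend to $[B_{m+1}(y)]$. I would argue: a vertex $z$ has maximal type $\ge m+1$ only if, reading Lemma~\ref{new}, $B_{m+1}(z)$ contains a special $(m+1)$-ball, but every colored $(m+1)$-ball except $[B_{m+1}(y)]$ appears infinitely often and hence (combined with uniqueness of special balls) the special $(m+1)$-ball must be one appearing finitely often; by the minimality of $m+1$ the only finitely-occurring $(m+1)$-ball is $[B_{m+1}(y)]$, so any special $(m+1)$-ball equals $[B_{m+1}(y)]$, which occurs only finitely many times. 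Thus there are only finitely many special $(m+1)$-balls in $T$, and by Lemma~\ref{new} (every special $l$-ball, $l>m+1$, contains a special $(m+1)$-ball) plus bounded type, there are only finitely many special $\ell$-balls of each radius $\ell > m$ — in fact, iterating, there is a finite bound on how far a special ball of radius $>m$ can sit from $K$.

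The cleanest finish: let $K$ be as above and let $R$ be large enough that every colored $(m+1)$-ball equivalent to $B_{m+1}(y)$ is contained in $K$ and $K \subset B_R(x_0)$. For a vertex $z$ with $d(z,K) > $ some explicit constant (depending on $m$ and the periods $p_i$ of the periodic extensions $\phi_i$, exactly as in the proof that eventually periodic colorings are of bounded type), one shows $[B_\ell(z)]$ is never special for $\ell > m$, hence $\tau(z) \le m$; but then by Proposition~\ref{maximal_type} all such $z$ with the same $\tau$-value lie in finitely many classes, and crucially the classes of maximal type $> m$ are all represented inside the finite set $\{ z : d(z,K) \le \text{const}\}$. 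Therefore each class of maximal type $> m$ is finite. For classes of maximal type $\le m$: these correspond to colored $m$-balls, of which there are $b(m) = m+2$, finitely many; but I must still show each such class is finite, which is the subtle point — a priori a non-special colored $m$-ball could still be the center of infinitely many vertices of that maximal type. Here I would invoke that $\phi$ restricted to each $T_i$ is (a restriction of) a periodic coloring $\phi_i$, and in a genuinely periodic coloring every class is infinite; so the assertion must really be about the original $\phi$ near $K$. I expect this last point — ruling out infinitely many vertices of a \emph{low} maximal type — to be the main obstacle, and I would handle it by a direct argument: if a colored $m$-ball $[B_m(w)]$ occurred as the $m$-ball of infinitely many vertices all of maximal type exactly $t \le m$, one derives via Lemma~\ref{lem:basic} and the edge-indexed graph $X$ of the proof of Proposition~\ref{prop:ep} that $X$ would contain a cycle or an infinite ray of such vertices, forcing $\phi$ to be periodic, contradicting the hypothesis that $\phi$ is non-periodic (eventually periodic but not periodic — the periodic case being trivial since then $b(n)$ is bounded, contradicting Sturmian).
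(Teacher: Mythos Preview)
Your proposal has genuine gaps and misses a much simpler argument.

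First, the specific errors. Your assertion that ``the special $(m+1)$-ball must be one appearing finitely often'' is unjustified: specialness means having two distinct $(m+2)$-extensions, and there is no a priori reason the special $(m+1)$-ball cannot be an infinitely-occurring one whose two extensions both occur infinitely often. Second, your claim that for $z$ far from $K$ one has $\tau(z)\le m$ is not merely unproved but false in the eventually periodic Sturmian examples: in the quotient ray of Theorem~\ref{prop:5} the maximal type \emph{increases} as one moves away from the leftmost vertex, so vertices far from the ``finite core'' have arbitrarily large $\tau$. You yourself flagged the contradiction (``Wait --- that would mean $\tau$ is bounded'') but the attempted resolution does not repair it. Finally, your treatment of low maximal types (``I would handle it by a direct argument \ldots forcing $\phi$ to be periodic'') is only a sketch with no mechanism.

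The paper's proof avoids all of this by a short bootstrapping argument you overlooked. One first shows a \emph{single} class is finite: take $x$ so that $[B_n(x)]$ contains the uniquely occurring subtree from Proposition~\ref{prop:ep}; then $[B_n(x)]$ occurs only finitely often, and since every $y$ with $\tau(y)=\tau(x)$ is in the same class as $x$ (Proposition~\ref{maximal_type}) and hence satisfies $[B_n(y)]=[B_n(x)]$, the class of maximal type $\tau(x)$ is finite. Then one propagates using Lemma~\ref{edge}: parts (2) and (3) say that every vertex of maximal type $\ell\pm 1$ is adjacent to some vertex of maximal type $\ell$, so
\[
\{y:\tau(y)=\ell\pm 1\}\ \subset\ \bigcup_{x':\,\tau(x')=\ell}\{y:d(x',y)=1\},
\]
which is a finite union of finite sets whenever $\{x':\tau(x')=\ell\}$ is finite. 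Iterating in both directions from $\ell=\tau(x)$ gives the result. Your proposal never invokes Lemma~\ref{edge}, which is precisely the tool that turns finiteness of one class into finiteness of all classes.
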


\begin{proof}
By Proposition~\ref{maximal_type}, a class of vertices is determined by the maximal type. 
Let  $B_n(x)$ be a ball containing the finite subtree appearing only once given by Proposition~\ref{prop:ep}.
Then $[B_n(x)]$ appears only finitely many times. Let $\tau(x) = m$.
Hence,
$$
\# \{ y \in VT : \tau(y) = m \} \le \# \{ y \in VT : [B_n(y)] = [B_n(x)] \} < \infty.
$$
Thus there are finitely many vertices of maximal type $m$.
Since
$$\{ y \in VT : \tau (y) = m \pm 1 \} \subset  \bigcup_{x': \tau(x')= m} \{ y \in VT : d(x', y) = 1 \}$$
by Lemma~\ref{edge},
$ \{ y \in VT : \tau(y) = m \pm 1 \} $ is finite. Repeat this process.
\end{proof}

\begin{theorem}
A Sturmian coloring $\phi$ is eventually periodic if and only if 
the quotient graph $X$ of $T$ in Theorem~\ref{prop:5} is one of the following graphs.

\medskip
\begin{center}
\begin{tikzpicture}[every loop/.style={}]
  \tikzstyle{every node}=[inner sep=-1pt]
  \node (0) at (-.8,0) {$\circ$};
  \node (1) at (0,0) {$\bullet$};
  \node (2) at (1.6,0) {$\bullet$};
  \node (3) at (3.2,0) {$\bullet$};
  \node (4) at (4.8,0) {$\bullet$};
  \node (5) at (6.4,0) {$\bullet$};
  \node (6) at (8,0) {$\bullet$};
  \node (7) at (9.6,0) {$\cdots$};

  \path[-] 
		 (0)  edge node [below=4pt] {$\quad 1$} (1)
		 (1)  edge node [below=4pt] {$k-1 \ \quad 1$} (2)
		 (2)  edge node [below=4pt] {$k-1 \ \quad 1$} (3)
		 (3)  edge node [below=4pt] {$k-1 \ \quad 1$} (4)
		 (4)  edge node [below=4pt] {$k-1 \ \quad 1$} (5)
		 (5)  edge node [below=4pt] {$k-1 \ \quad 1$} (6)
		 (6)  edge node [below=4pt] {$k-1 \ \quad \ $} (7) ;
\end{tikzpicture}
\end{center}

\medskip
\begin{center}
\begin{tikzpicture}[every loop/.style={}]
  \tikzstyle{every node}=[inner sep=-1pt]
  \node (1) at (0,0) {$\bullet$};
  \node (2) at (1.6,0) {$\bullet$};
  \node (3) at (3.2,0) {$\bullet$};
  \node (4) at (4.8,0) {$\bullet$};
  \node (5) at (6.4,0) {$\bullet$};
  \node (6) at (8,0) {$\bullet$};
  \node (7) at (9.6,0) {$\cdots$};

  \path[-] 
		 (1)  edge node [below=4pt] {$k  \quad \qquad 1$} (2)
		 (2)  edge node [below=4pt] {$k-1 \ \quad 1$} (3)
		 (3)  edge node [below=4pt] {$k-1 \ \quad 1$} (4)
		 (4)  edge node [below=4pt] {$k-1 \ \quad 1$} (5)
		 (5)  edge node [below=4pt] {$k-1 \ \quad 1$} (6)
		 (6)  edge node [below=4pt] {$k-1 \ \quad \ $} (7) ;
\end{tikzpicture}
\end{center}

\end{theorem}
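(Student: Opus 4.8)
The plan is to establish the equivalence by analyzing the quotient graph $X$ from Theorem~\ref{prop:5}, using the key fact that for a Sturmian coloring a vertex class is determined by its maximal type (Proposition~\ref{maximal_type}), so that $VX = \{m, m+1, m+2, \dots\}$ and the edges between consecutive integers carry the structure of an edge-indexed graph whose universal cover is $T$. For the forward direction, suppose $\phi$ is eventually periodic. By Lemma~\ref{lem:9} there are only finitely many vertices of any given maximal type; in particular there are finitely many vertices of maximal type $m$. I would first argue that $X$ has no loops: a loop at vertex $i$ in $X$ corresponds (via the barycentric subdivision / inversion discussion) to a pair of neighboring vertices of $T$ of the \emph{same} class, which forces a color-preserving automorphism swapping them and hence, by Lemma~\ref{new} propagated outward, infinitely many vertices of that class, contradicting finiteness. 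So $X$ is the bare ray $\{m, m+1, \dots\}$ with a single edge between consecutive vertices, and the only remaining freedom is the pair of indices on each edge $[i, i+1]$ and whether the leftmost vertex is genuine (a vertex of $T$) or a white vertex from the subdivision.

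Next I would pin down the indices. Write the edge $[i, i+1]$ with forward index $i(i, i+1) = a_i$ (number of neighbors of a type-$i$ vertex having type $i+1$) and backward index $i(i+1, i) = b_{i+1}$. Lemma~\ref{edge} says every vertex has at least one neighbor of type one higher, so $a_i \ge 1$ for all $i \ge m$, and every non-minimal vertex has a neighbor of type one lower, so $b_i \ge 1$ for $i > m$. The $k$-regularity constraint at vertex $i$ reads $b_i + a_i = k$ (for $i > m$), and at vertex $m$ it reads $a_m = k$ if $m$ is a genuine vertex of $T$ whose every neighbor has higher type, or $a_m + (\text{loop contribution}) = k$ — but we have ruled out loops, so at the left end either $a_m = k$ (second graph) or the leftmost vertex is a white barycentric vertex carrying a half-edge of index $1$ into a genuine vertex, giving the first graph with the $\circ$ and index $1$. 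The point forcing $a_i = 1$ for all $i$ in the eventually periodic case is counting: the number $c_i$ of vertices of maximal type $i$ satisfies a transfer relation $c_{i+1} = c_i \cdot a_i / b_{i+1}$ coming from counting edges between the type-$i$ and type-$(i+1)$ levels in $T$ (each type-$i$ vertex sends $a_i$ edges up, each type-$(i+1)$ vertex receives $b_{i+1}$ edges down). Since all $c_i$ are finite and positive and — crucially — the special $m$-ball is unique so there is exactly one vertex class that "branches", the levels stabilize; combined with $a_i + b_{i+1} = k$ and $a_i, b_{i+1} \ge 1$ this forces $a_i = 1$, $b_{i+1} = k-1$ for all large $i$, and then propagating the uniqueness of the special ball backward forces it everywhere past the left end, yielding exactly the two pictured graphs.

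For the converse, suppose $X$ is one of the two displayed graphs. Then $X$ is a ray with edge-indices eventually (in fact everywhere) equal to $(1, k-1)$, and the associated periodic-looking structure means that past the first vertex the colored balls repeat: I would show directly that $[B_n(\widetilde{i})] = [B_n(\widetilde{j})]$ whenever $i, j \ge n$ and $i \equiv j$ in the sense of being beyond the influence of the left end — but here there is no period, rather the coloring of $X$ far out is forced to be eventually constant by the index pattern $(1, k-1)$, so a tail of $T$ is monochromatic and the finite subtree sitting over a neighborhood of the left end of $X$ appears exactly once (any occurrence would have to map the left end, a unique vertex class, to itself). By Proposition~\ref{prop:ep}, $\phi$ is eventually periodic. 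The main obstacle I anticipate is the bookkeeping in the forward direction: carefully justifying the transfer relation $c_{i+1} = c_i a_i / b_{i+1}$ and extracting from finiteness of all the $c_i$ together with the uniqueness of the special $m$-ball that $a_i = 1$ identically — one has to rule out, e.g., a pattern where indices oscillate between $(1, k-1)$ and $(2, k-2)$ on a level set of bounded but varying size, which is where the uniqueness of the special ball at \emph{every} radius (not just radius $m$) does the real work. The loop-exclusion step and the left-end case analysis are routine by comparison.
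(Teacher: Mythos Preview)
Your forward direction has a genuine gap, and in fact the target is stated backward. In the two displayed graphs the \emph{backward} index $i([l,l-1])$ equals $1$ and the \emph{forward} index $i([l,l+1])$ equals $k-1$; your conclusion $a_i = 1$, $b_{i+1} = k-1$ would give $c_{i+1} = c_i/(k-1)$, which cannot stay a positive integer. More seriously, the transfer relation $c_{i+1} = c_i a_i / b_{i+1}$ together with finiteness of every $c_i$ does \emph{not} determine the indices: for $k\ge 4$ a pattern like $a_i = 2$, $b_{i+1} = k-2$ gives $c_{i+1} = \tfrac{2}{k-2}\,c_i$, which is perfectly compatible with all $c_i$ finite. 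You acknowledge this obstacle, but the appeal to ``uniqueness of the special ball at every radius'' is not doing any visible work; you have no mechanism that converts that uniqueness into a constraint on the indices. Your loop-exclusion step is also unconvincing: a single inversion swapping two adjacent same-class vertices has finite orbit, and Lemma~\ref{new} (which passes from a special $n$-ball to special sub-balls) has no evident bearing on producing infinitely many vertices of a class.

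The paper bypasses all of this with a direct geodesic argument using only Lemma~\ref{lem:9}. If $i([l,l-1]) \ge 2$ for some $l>m$, then starting at a type-$l$ vertex one can descend to a type-$m$ vertex, ascend (via a different type-$(m{+}1)$ neighbour) back to a type-$l$ vertex, descend by the \emph{second} type-$(l{-}1)$ neighbour, and repeat; the resulting bi-infinite geodesic hits infinitely many distinct type-$m$ vertices, contradicting Lemma~\ref{lem:9}. A loop at some $l>m$ is handled the same way (cross the loop at the top instead of using a second downward edge), and two loops at $m$ give an infinite geodesic entirely of type $m$. This forces $i([l,l-1]) = 1$ for every $l>m$ and rules out all loops except possibly a single one at $m$, which is exactly the two pictures. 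No counting is needed.

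For the converse, your claim that the index pattern forces $\phi_X$ to be eventually constant is false (the example $a,b,a,b,\ldots$ on the first graph already shows this). What is true, and what the paper means by ``clear'', is that with these indices there are only one or two type-$m$ vertices in $T$ (any two would be joined by a path whose peak type $p$ has two type-$(p{-}1)$ neighbours, contradicting $i([p,p-1])=1$); then a ball around them appears exactly once because any vertex with the same $(m{+}1)$-ball is in the same class by Lemma~\ref{lem:basic}, and Proposition~\ref{prop:ep} finishes.
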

\begin{proof}
Let us denote by $m$ the leftmost vertex in the graph of Theorem~\ref{prop:5}. If there exist two loops at $m$, 
 then there exists an infinite geodesic in $T$ all of whose vertices are lifts of $m$, which is a contradiction to Lemma~\ref{lem:9}.
If the index of the edge $[l, l-1]$ is larger than 1, then there exists a geodesic whose vertices are of maximal type 
$$\cdots, m+1, m, m+1, \cdots, l, l+1, l \cdots, m+1, m, m+1, \cdots, $$
which is again a contradiction.
If there is a loop from vertex $l$ to itself, then there exists a geodesic whose vertices are of maximal type
$$ \cdots, m, m+1, \cdots, l-1, l, l , l-1, \cdots, m+1, m, m+1, \cdots, $$
again a contradiction.
The other direction is clear.
\end{proof}

\begin{example}
The followings are eventually periodic Sturmian colorings.

\medskip
\begin{center}
\begin{tikzpicture}
  \tikzstyle{every node}=[inner sep=-1pt]
  \node (0) {$\bullet$} node [below=4pt] {$b$};
  \node (1) at (1,0) {$\bullet$} node [below=4pt] at (1,0) {$a$};
  \node (2) at (2,0) {$\bullet$} node [below=4pt] at (2,0) {$a$};
  \node (3) at (3,0) {$\bullet$} node [below=4pt] at (3,0) {$a$};
  \node (4) at (4,0) {$\bullet$} node [below=4pt] at (4,0) {$a$};
  \node (5) at (5,0) {$\bullet$} node [below=4pt] at (5,0) {$a$};
  \node (6) at (6,0) {$\bullet$} node [below=4pt] at (6,0) {$a$};
  \node (7) at (7,0) {$\bullet$} node [below=4pt] at (7,0) {$a$};
  \node (8) at (8,0) {$\bullet$} node [below=4pt] at (8,0) {$a$};
  \node (9) at (9,0) {$\cdots$};

 \path[-] (0) edge node [above=4pt] {3 \quad 1} (1)
		 (1) edge node [above=4pt] {2 \quad 1} (2)
		 (2) edge node [above=4pt] {2 \quad 1} (3)
		 (3) edge node [above=4pt] {2 \quad 1} (4)
		 (4) edge node [above=4pt] {2 \quad 1} (5)
		 (5) edge node [above=4pt] {2 \quad 1} (6)
		 (6) edge node [above=4pt] {2 \quad 1} (7)
		 (7) edge node [above=4pt] {2 \quad 1} (8)
		 (8) edge node [above=4pt] {2 \quad \ } (9);
\end{tikzpicture}
\end{center}

\medskip
\begin{center}
\begin{tikzpicture}[every loop/.style={}]
  \tikzstyle{every node}=[inner sep=-1pt]
  \node (-1) at (-.5,0) {$\circ$};
  \node (0) {$\bullet$} node [below=4pt] {$b$};
  \node (1) at (1,0) {$\bullet$} node [below=4pt] at (1,0) {$a$};
  \node (2) at (2,0) {$\bullet$} node [below=4pt] at (2,0) {$a$};
  \node (3) at (3,0) {$\bullet$} node [below=4pt] at (3,0) {$a$};
  \node (4) at (4,0) {$\bullet$} node [below=4pt] at (4,0) {$a$};
  \node (5) at (5,0) {$\bullet$} node [below=4pt] at (5,0) {$a$};
  \node (6) at (6,0) {$\bullet$} node [below=4pt] at (6,0) {$a$};
  \node (7) at (7,0) {$\bullet$} node [below=4pt] at (7,0) {$a$};
  \node (8) at (8,0) {$\bullet$} node [below=4pt] at (8,0) {$a$};
  \node (9) at (9,0) {$\cdots$};

  \path[-] (-1) edge node [above=4pt] {\ 1} (0)    
		 (0) edge node [above=4pt] {2 \quad 1} (1)
		 (1) edge node [above=4pt] {2 \quad 1} (2)
		 (2) edge node [above=4pt] {2 \quad 1} (3)
		 (3) edge node [above=4pt] {2 \quad 1} (4)
		 (4) edge node [above=4pt] {2 \quad 1} (5)
		 (5) edge node [above=4pt] {2 \quad 1} (6)
		 (6) edge node [above=4pt] {2 \quad 1} (7)
		 (7) edge node [above=4pt] {2 \quad 1} (8)
		 (8) edge node [above=4pt] {2 \quad \ } (9);
\end{tikzpicture}
\end{center}

\medskip
\begin{center}
\begin{tikzpicture}[every loop/.style={}]
  \tikzstyle{every node}=[inner sep=-1pt]
  \node (-1) at (-.5,0) {$\circ$};
  \node (0) {$\bullet$} node [below=4pt] {$a$};
  \node (1) at (1,0) {$\bullet$} node [below=4pt] at (1,0) {$b$};
  \node (2) at (2,0) {$\bullet$} node [below=4pt] at (2,0) {$a$};
  \node (3) at (3,0) {$\bullet$} node [below=4pt] at (3,0) {$b$};
  \node (4) at (4,0) {$\bullet$} node [below=4pt] at (4,0) {$a$};
  \node (5) at (5,0) {$\bullet$} node [below=4pt] at (5,0) {$b$};
  \node (6) at (6,0) {$\bullet$} node [below=4pt] at (6,0) {$a$};
  \node (7) at (7,0) {$\bullet$} node [below=4pt] at (7,0) {$b$};
  \node (8) at (8,0) {$\bullet$} node [below=4pt] at (8,0) {$a$};
  \node (9) at (9,0) {$\cdots$};

  \path[-] (-1) edge node [above=4pt] {\ 1} (0)    
		 (0) edge node [above=4pt] {2 \quad 1} (1)
		 (1) edge node [above=4pt] {2 \quad 1} (2)
		 (2) edge node [above=4pt] {2 \quad 1} (3)
		 (3) edge node [above=4pt] {2 \quad 1} (4)
		 (4) edge node [above=4pt] {2 \quad 1} (5)
		 (5) edge node [above=4pt] {2 \quad 1} (6)
		 (6) edge node [above=4pt] {2 \quad 1} (7)
		 (7) edge node [above=4pt] {2 \quad 1} (8)
		 (8) edge node [above=4pt] {2 \quad \ } (9);
\end{tikzpicture}
\end{center}

\end{example}

\begin{remark} Bi-infinite Sturmian words are defined as non-eventually periodic words with subword complexity $p(n)=n+1$. Note that bi-infinite words with $p(n)=n+1$ do not necessarily correspond to colorings of 2-regular tree with $b(n)=n+2$.
\end{remark}

\subsection{Sturmian colorings of unbounded type}

Let $\phi$ be a Sturmian coloring of unbounded type. The type set of every vertex is an infinite set. If two vertices $x,y$ have the same type set, then they have the same $n$-balls for every $n$ by Lemma~\ref{lem:2}, since there exists a sequence $n_k \to \infty$ for which $[B_{n_k}(x)]=[B_{n_k}(y)]$.

Thus we can construct $X$ by letting $X= T/\sim$, where $x \sim y$ if they have the same type set.
\begin{lemma} 
The vertices of a 1-ball may have at most three distinct type sets.
\end{lemma}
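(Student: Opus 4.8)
The plan is to analyze, for a fixed vertex $v$ with neighbors $v_1, \dots, v_k$, how the type set $\Lambda_{v_i}$ can differ from $\Lambda_v$, using the branch decomposition $B_n(x) = \bigcup_i \BR_n(x,x_i)$ and Lemma~\ref{lem:1}. The key observation is that whether $n \in \Lambda_x$ is detectable from the branches: by Lemma~\ref{lem:1}, $[B_n(x)]$ determines $[B_{n+1}(x)]$ exactly when each $n$-branch $[\BR_n(x,x_i)]$ has a unique extension to $[\BR_{n+1}(x,x_i)]$ \emph{and} the multiset of branch-classes around $x$ determines that of $(n+1)$-branch-classes. For a Sturmian coloring there is, for each $n$, a \emph{unique} special $n$-ball; I want to leverage this scarcity of branching to bound the number of distinct type sets appearing among $v, v_1, \dots, v_k$.

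First I would set up the relationship between $\Lambda_v$ and $\Lambda_{v_i}$ precisely. For each $i$, consider the ``half-tree'' direction: $\BR_{n+1}(v_i, v) = \bigcup_{j \ne i}\BR_n(v, v_j) \cup [v_i,v]$, so the branches of $v_i$ pointing away from $v$ are built from branches of $v$. Applying Lemma~\ref{new}: if $n \in \Lambda_v$ but $n \notin \Lambda_{v_i}$, or vice versa, there is a controlled ``propagation'' of the special ball toward or away from $v$. I expect to show that $\Lambda_{v_i} \setminus \Lambda_v$ and $\Lambda_v \setminus \Lambda_{v_i}$ are governed by a single ``phase'': roughly, moving from $v$ to $v_i$ either shifts the membership of each large $n$ by exactly one step in one direction, or not at all. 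Concretely, I would prove that for $n$ large (beyond any finite ambiguity), $n \in \Lambda_{v_i} \iff n + \epsilon_i \in \Lambda_v$ for a fixed $\epsilon_i \in \{-1, 0, +1\}$ depending only on the direction $i$ — this is the analogue, for the quotient graph $X$ of Theorem~\ref{prop:5}, of the fact that $X$ only has edges between consecutive integers $m, m+1$. Since the type set (up to a finite initial segment) is then determined by this integer shift from a fixed reference, and the shifts from $v$ take at most the three values $-1, 0, +1$, the neighbors $v_i$ realize at most three distinct type sets (and $v$ itself has shift $0$, which is among them).

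The main obstacle will be handling the \emph{finite} discrepancy: type sets are equal iff they are equal as sets, not merely eventually equal, so I must rule out two neighbors having type sets that agree from some point on but differ on a finite initial segment, giving a fourth class. Here I would invoke Lemma~\ref{lem:2} in reverse together with the uniqueness of the special $n$-ball: if $\Lambda_x$ and $\Lambda_y$ differ only finitely, pick $n_k \to \infty$ with both in (or both out of) the type set, deduce $[B_{n_k}(x)] = [B_{n_k}(y)]$ (both equal the unique non-special, or are controlled by the unique special, configuration at that level), hence $x \sim y$ by Lemma~\ref{lem:2}, hence $\Lambda_x = \Lambda_y$ exactly. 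This collapses the ``eventual'' equivalence to genuine equivalence, so the count of three from the shift analysis is sharp. A secondary point to check is that the three potential values $-1, 0, +1$ for $\epsilon_i$ cannot all be refined away — but since we only need the upper bound ``at most three,'' I do not need to exhibit all three; I just need the trichotomy of shifts, which follows from Lemma~\ref{edge} applied one level down in $X$, or directly from Lemma~\ref{new} bounding how far a special ball's center can move under a branch isomorphism.
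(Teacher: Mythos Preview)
Your proposal rests on a claim you do not prove: that for large $n$ one has $n \in \Lambda_{v_i} \iff n + \epsilon_i \in \Lambda_v$ with $\epsilon_i \in \{-1,0,+1\}$. Neither reference you offer delivers this. Lemma~\ref{edge} is about \emph{maximal types} and is proved only in the bounded-type setting via Proposition~\ref{maximal_type}; there is no maximal type here, and invoking it ``one level down in $X$'' is circular, since the linear structure of $X$ in the unbounded case is exactly what this lemma is used to establish. Lemma~\ref{new} only says a special $n$-ball contains a special $\ell$-ball at every radius $\ell<n$; it does not yield a bijection, let alone a shift, between $\Lambda_v$ and $\Lambda_{v_i}$. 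In fact the paper's own argument shows that distinct type sets among nearby vertices are \emph{eventually disjoint}, which is incompatible with your shift picture unless $\Lambda_v$ eventually contains no two consecutive integers --- an additional structural fact you have not addressed. Your step~(4), collapsing eventual equality to exact equality via Lemma~\ref{lem:2}, is fine, but it cannot rescue an unproved premise.

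The paper's route is different and does not attempt any shift relation. Its key observation is that if $n \in \Lambda_x \cap \Lambda_y$ then, by uniqueness of the special $n$-ball, $[B_n(x)]=[B_n(y)]$, so $\Lambda_x$ and $\Lambda_y$ agree on $[0,n]$; hence two \emph{distinct} type sets are disjoint from some point on. Assuming four distinct type sets among $x,x_1,x_2,x_3$, choose $M$ beyond which they are pairwise disjoint and pick $\ell>M$ in $\Lambda_x$. By pigeonhole some $x_i$ has $\Lambda_{x_i}\cap\{\ell-1,\ell,\ell+1\}=\emptyset$. Using the witness $y$ for $\ell\in\Lambda_x$ one gets $[B_{\ell-1}(x_i)]=[B_{\ell-1}(y_i)]$, extends uniquely (no types of $x_i$ in that range) to $[B_p(x_i)]=[B_p(y_i)]$ for some $p>\ell+1$, and then pulls $y$ back through this isomorphism to a vertex $x'\in B_2(x)$ with $\ell\in\Lambda_{x'}$ and $x'\not\sim x$, contradicting the choice of $M$.
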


\begin{proof}
Let $x$ be the center of a 1-ball. Suppose that there are three vertices $x_1,x_2,x_3$ neighboring $x$ such that $x, x_1, x_2, x_3$ have mutually distinct type sets. 
If $n \in \Lambda_x \cap \Lambda_y$, then $[B_n(x)]=[B_n(y)]$ by the uniqueness of special $n$-ball. 
Thus for $\ell \leq n$, $\ell \in \Lambda_x$ if and only if $\ell \in \Lambda_y$. 

We conclude that the type sets of $x$ and $y$ are equal up to some number, say $N$, and they are disjoint from $N+1$.
Choose such $N$ for each pair of vertices from different classes in $B_2(x)$ and let $M$ be the maximum of such $N$'s. 
Then the type sets of two non-equivalent vertices in $B_2(x)$ intersected with $\{ M+1, M+2, \cdots \}$ are all mutually disjoint.

Now let $\ell >M$ be in the type set $\Lambda_x$. 
There exists such $\ell$ since $\Lambda_x$ is infinite. 
Since the type sets of $x, x_1, x_2, x_3$ intersected with $\mathbb{N}_{\geq M}$ are mutually disjoint, at least one of $x_1$, $x_2$, $x_3$ has a type set disjoint from $\{\ell-1, \ell, \ell+1\}$.
Denote by $x_i$ such a vertex, thus $\{\ell-1, \ell, \ell+1\} \cap \Lambda_{x_i} = \emptyset$. 

Since $\ell \in \Lambda_x$, there exists a vertex $y$ such that 
$[B_\ell (x)] = [B_\ell (y)]$ but $[B_{\ell+1} (x)] \ne [B_{\ell+1} (y)]$.
Let $f : B_\ell (x) \to B_\ell (y)$ be the color-preserving isomorphism
and let $y_i = f(x_i)$, so that $[B_{\ell-1}(x_i)] = [B_{\ell-1}(y_i)]$.
Let $p = \min \{ \Lambda_{x_i} \bigcap \mathbb{N}_{\geq \ell-1}\}  > \ell+1 $.
Then $[B_{p}(x_i)] = [B_{p}(y_i)]$, since $[B_{\ell-1}(x_i)], [B_{\ell}(x_i)], [B_{\ell+1}(x_i)]$ not being special implies that there is a unique extension from $[B_{\ell-1}(x_i)]$ to $[B_p(x_i)]$. 

Let $g : B_{p}(x_i) \to B_{p}(y_i)$ be a color-preserving isomorphism and $x' = g^{-1}(y)$.
Then $d(x', x_i) = d(y, y_i) = 1$, thus, we have $ B_{p-1} (x') \subset B_{p}(x_i)$ and 
$[ B_{p-1} (x') ] = [ B_{p-1} (y) ]$ by $g$.
From $p > \ell+1$, we have $[ B_{\ell+1} (x')] = [ B_{\ell+1} (y) ] \ne [B_{\ell+1} (x)]$  and $[ B_{\ell} (x') ] = [ B_{\ell} (y) ] = [B_\ell (x)] $,
which implies that $x$ and $x'$ are not in the same class, but their type sets have an intersection containing $\ell > M$, contradicting the choice of $M$.
\end{proof}

It follows that in the graph $X=T/\sim$, a given vertex has at most two neighboring vertices except itself. 
If there are more than one vertices with only one neighbor, then the graph $X$ is finite, which contradicts the fact that $b_\phi(n)$ is not bounded.
Therefore, we obtain the following theorem.

\begin{theorem}\label{prop:6}
If $\phi$ is a Sturmian coloring of unbounded type,
then there exists a proper quotient infinite graph $X$ and a coloring $\phi_X$ on $X$ such that
$\phi = \phi_X \circ \pi$, where $\pi$ is the projection from the regular tree $T$ to $X$.
Moreover, we have
$$VX =\{ 0,1,2, \dots , \}, \quad  EX \subset \{ [i,i+1] \, | \, i \ge 0 \} \cup \{ [i,i] \, | \, i \ge 0 \} $$
or 
$$VX =\{ \dots, -2, -1, 0,1,2, \dots , \}, \quad EX \subset \{ [i,i+1] \, | \, i \in \mathbb Z \} \cup \{ [i,i] \, | \, i \in \mathbb Z \}. $$
\end{theorem}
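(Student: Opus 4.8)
The plan is to take $X = T/\!\sim$, where $x \sim y$ precisely when $\Lambda_x = \Lambda_y$, and to establish in turn that $\phi$ descends through the quotient map $\pi$, that $X$ is infinite, and that $X$ is a one-sided ray or a bi-infinite line (with loops possibly attached). The asserted descriptions of $VX$ and $EX$ will then follow just by choosing an indexing of the vertices.

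First I would verify that $\phi_X$ is well defined with $\phi = \phi_X\circ\pi$. If $\Lambda_x = \Lambda_y$, choose $n_k \to \infty$ in this common type set; since there is a unique special $n_k$-ball, both $[B_{n_k}(x)]$ and $[B_{n_k}(y)]$ equal it, so $[B_{n_k}(x)] = [B_{n_k}(y)]$, and Lemma~\ref{lem:2} produces a color-preserving automorphism carrying $x$ to $y$. In particular $\phi(x) = \phi(y)$, so $\phi_X([x]) := \phi(x)$ is unambiguous, and $\pi$ extends to a graph morphism $T \to X$ by the usual quotient construction, with $\phi = \phi_X \circ \pi$. The same computation shows that the assignment sending a color-preserving automorphism class to its (well-defined) type set is a bijection onto the set of type sets, so $|VX|$ equals the number of classes. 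If $X$ were finite there would be finitely many classes; since $[B_n(x)]$ depends only on the class of $x$, this would force $b_\phi$ bounded, contradicting $b_\phi(n) = n+2$. Hence $X$ is infinite.

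Next, the preceding lemma says the vertices of any $1$-ball in $T$ carry at most three distinct type sets, i.e. every vertex of $X$ is adjacent to at most two vertices other than itself. A connected graph with this property, after deleting loops and collapsing multi-edges, is a connected simple graph of maximum degree at most $2$, hence a finite path, a finite cycle, a one-sided ray, or a bi-infinite line; the first two are finite and so excluded by the previous paragraph. Indexing a ray by $\{0,1,2,\dots\}$ from its unique endpoint, or a line by $\mathbb Z$, the only non-loop edges join consecutive indices, which is exactly the claimed form of $VX$ and $EX$; and $X$ is a proper quotient of $T$ since it is not $k$-regular. The genuine content here is the preceding lemma, which is already proved; the only remaining point I would be careful to state is that a finite $X$ really does force $b_\phi$ bounded, which is precisely the consequence of Lemma~\ref{lem:2} used for the well-definedness of $\phi_X$.
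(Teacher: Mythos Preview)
Your argument is correct and follows essentially the same route as the paper: define $X=T/\!\sim$ via equality of type sets, use Lemma~\ref{lem:2} together with uniqueness of special balls to see that equal type sets force equal class (hence $\phi_X$ is well defined and $|VX|$ finite would make $b_\phi$ bounded), and then invoke the preceding lemma to bound the local degree of $X$. Your case analysis (finite path, finite cycle, ray, line) is in fact a bit more explicit than the paper's terse ``more than one vertex with only one neighbor'' remark, but the substance is identical.
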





\begin{example}[Sturmian colorings with a periodic edge configuration]\label{ex:unbounded1}
By a given Sturmian coloring on a 2-regular tree $Y$, 
we have the following Sturmian coloring of unbounded type on $k$-regular tree.

Let $X$ be the following infinite edge-indexed colored graph: 
\medskip
\begin{center}
\begin{tikzpicture}[every loop/.style={}]
  \tikzstyle{every node}=[inner sep=-1pt]
  \node (0) at (-6,0) {$\cdots$};
  \node (1) at (-5,0) {$\bullet$} node [below=8pt] at (-5,0) {$b$};
  \node (a1) at (-5,.5) {$\circ$};
  \node (2) at (-4,0) {$\bullet$} node [below=8pt] at (-4,0) {$a$};
  \node (a2) at (-4,.5) {$\circ$};
  \node (3) at (-3,0) {$\bullet$} node [below=8pt] at (-3,0) {$a$};
  \node (a3) at (-3,.5) {$\circ$};
  \node (4) at (-2,0) {$\bullet$} node [below=8pt] at (-2,0) {$b$};
  \node (a4) at (-2,.5) {$\circ$};
  \node (5) at (-1,0) {$\bullet$} node [below=8pt] at (-1,0) {$a$};
  \node (a5) at (-1,.5) {$\circ$};
  \node (6) at  (0,0) {$\bullet$} node [below=8pt] at (0,0) {$b$};
  \node (a6) at (0,.5) {$\circ$};
  \node (7) at  (1,0) {$\bullet$} node [below=8pt] at (1,0) {$a$};
  \node (a7) at (1,.5) {$\circ$};
  \node (8) at  (2,0) {$\bullet$} node [below=8pt] at (2,0) {$a$};
  \node (a8) at (2,.5) {$\circ$};
  \node (9) at  (3,0) {$\bullet$} node [below=8pt] at (3,0) {$b$};
  \node (a9) at (3,.5) {$\circ$};
  \node (10) at (4,0) {$\bullet$} node [below=8pt] at (4,0) {$a$};
  \node (a10) at (4,.5) {$\circ$};
  \node (11) at (5,0) {$\cdots$};

  \path[-] 
		 (0)  edge node [below=2pt] {$\ \ \ t$} (1)
		 (1)  edge node [below=2pt] {$t \ \ t$} (2)
		 (2)  edge node [below=2pt] {$t \ \ t$} (3)
		 (3)  edge node [below=2pt] {$t \ \ t$} (4)
		 (4)  edge node [below=2pt] {$t \ \ t$} (5)
		 (5)  edge node [below=2pt] {$t \ \ t$} (6)
		 (6)  edge node [below=2pt] {$t \ \ t$} (7)
		 (7)  edge node [below=2pt] {$t \ \ t$} (8)
		 (8)  edge node [below=2pt] {$t \ \ t$} (9)
		 (9)  edge node [below=2pt] {$t \ \ t$} (10)
		 (10) edge node [below=2pt] {$t \ \ \ $} (11) 
		 (1)  edge node [right=3pt] {$s$} (a1)    
		 (2)  edge node [right=3pt] {$s$} (a2)    
		 (3)  edge node [right=3pt] {$s$} (a3)    
		 (4)  edge node [right=3pt] {$s$} (a4)    
		 (5)  edge node [right=3pt] {$s$} (a5)    
		 (6)  edge node [right=3pt] {$s$} (a6)    
		 (7)  edge node [right=3pt] {$s$} (a7)    
		 (8)  edge node [right=3pt] {$s$} (a8)    
		 (9)  edge node [right=3pt] {$s$} (a9)    
		 (10) edge node [right=3pt] {$s$} (a10) ;  
\end{tikzpicture}
\end{center}
Let us index the vertex set $VX$ by $\mathbb Z$.
Each edge $[i,i+1]$ and $[i,i-1]$ are indexed with $t$ and $[i,i]$ is indexed with $s$. 
Note that $k = s+ 2t$. 

Let $\phi_{0}$ be a Sturmian coloring of $Y$. 
Since both $VY$ and $VX$ are indexed by $\mathbb Z$,
define a coloring $\phi_X$ on $X$
by $\phi_X(i) = \phi_0(i)$. 
Let $\phi$ be the coloring on $T$ given by $\phi_X \circ \pi$, where $\pi : VT \to VX$ is the projection discussed in Section~\ref{sec3}.
We claim that $b_{\phi_0} (n) = b_\phi (n)$ so that if $\phi_0$ is Sturmian, then so is $\phi$.

Let $x', y' \in \mathbb Z$ be vertices in $Y$. Let us denote the corresponding vertices in $VX$ by $x, y$,
and let $\widetilde{x} , \widetilde{y}$ be vertices in $T$ such that $\pi(\widetilde{x}) = x$, $\pi(\widetilde{y}) = y$.
If $[B_n(x')]=[B_n(y')]$ in $\phi_0$, then $\phi_0(x' +i) = \phi_0(y' +i)$ for all $-n  \le i \le n$, thus  $[B_n(\widetilde{x})]=[B_n(\widetilde{y})]$ in $\phi$.

If $[B_n(x')] \ne [B_n(y')]$ in $\phi_0$, then 
there is smallest $m, 0 \le m \le n$, such that $[B_m(x')] \ne [B_m(y')]$.
Therefore, $\phi_0(x' +i) = \phi_0(y' +i)$ for all $ |  i |  \le m-1$
and by Lemma~\ref{lem:1}.(2), $\{ [\BR_n(x',x'+1)], [\BR_n(x',x'-1)] \}$ are not $\{ [\BR_n(y',y'+1)], [\BR_n(y',y'-1)] \}$,
i.e.
\begin{multline}\label{color} \{ (\phi_0(x' + m-1), \phi_0(x' + m)), (\phi_0(x' - m+1), \phi_0(x'- m))  \} \\
\ne \{ (\phi_0(y' + m-1), \phi_0(y' + m)), (\phi_0(y' - m+1), \phi_0(y' - m))  \}. \end{multline}
In $[B_m(x)]$, there are $k(k-1)^{m-1}$  directed edges with terminal vertex of distance $m$ from $x$.
Among them, $t^{m}$ number of edges have the initial and terminal vertices colored by $\phi_0(x' + m-1), \phi_0(x' + m)$ and $t^{m}$ of them have the initial and terminal vertices colored by $\phi_0(x'- m+1), \phi_0(x '- m)$.
All the other edges have initial and terminal vertices colored by $\phi_0(x' + i)$, $|i| \le m-1$.
Therefore, by (\ref{color}),  $[B_n(\widetilde{x})] \ne [B_n(\widetilde{y})]$ in $\phi$.

Hence we have $[B_n(\widetilde{x})] = [B_n(\widetilde{y})]$  in $\phi$ if and only if $[B_n(x)] = [B_n(y)]$  in $\phi_0$.
It follows that $b_\phi (n) = b_{\phi_0} (n)$.

\end{example}


The following edge-indexed colored graph is an example of Sturmian coloring, whose coloring on the quotient ray is periodic but whose edge index is Sturmian.

\begin{example}
Let $\phi_0$ be a Sturmian coloring with colors $\{c,d\}$ on $Y$, which is associated to a bi-infinite Sturmian sequence.
Then we can construct a linear graph with loops $X$ as follows:

\medskip
\begin{center}
\begin{tikzpicture}[every loop/.style={},scale=.8]
  \tikzstyle{every node}=[inner sep=-1pt]

  \node (b0) at (-5.5,1.5) {} ;
  \node (b1) at (-5,1.5) {$\bullet$} node [above=6pt] at (-5,1.5) {$c$};
  \node (b3) at (-3,1.5) {$\bullet$} node [above=6pt] at (-3,1.5) {$d$};
  \node (b5) at (-1,1.5) {$\bullet$} node [above=6pt] at (-1,1.5) {$c$};
  \node (b7) at (1,1.5) {$\bullet$} node [above=6pt] at (1,1.5) {$c$};
  \node (b9) at (3,1.5) {$\bullet$} node [above=6pt] at (3,1.5) {$d$};
  \node (b11) at (5,1.5) {$\bullet$} node [above=6pt] at (5,1.5) {$c$};
  \node (b13) at (7,1.5) {$\bullet$} node [above=6pt] at (7,1.5) {$d$};
  \node (b15) at (9,1.5) {$\bullet$} node [above=6pt] at (9,1.5) {$c$};
  \node (b16) at (9.5,1.5) {} ;

  \node (0) at (-5.5,0) {};
  \node (1) at (-5,0) {$\bullet$} node [below=9pt] at (-5,0) {$a$};
  \node (a1) at (-5,.5) {$\circ$};
  \node (2) at (-4,0) {$\bullet$} node [below=9pt] at (-4,0) {$b$};
  \node (a2) at (-4,.5) {$\circ$};
  \node (3) at (-3,0) {$\bullet$} node [below=9pt] at (-3,0) {$a$};
  \node (a3) at (-3,.5) {$\circ$};
  \node (4) at (-2,0) {$\bullet$} node [below=9pt] at (-2,0) {$b$};
  \node (a4) at (-2,.5) {$\circ$};
  \node (5) at (-1,0) {$\bullet$} node [below=9pt] at (-1,0) {$a$};
  \node (a5) at (-1,.5) {$\circ$};
  \node (6) at (0,0) {$\bullet$} node [below=9pt] at (0,0) {$b$};
  \node (a6) at (0,.5) {$\circ$};
  \node (7) at (1,0) {$\bullet$} node [below=9pt] at (1,0) {$a$};
  \node (a7) at (1,.5) {$\circ$};
  \node (8) at (2,0) {$\bullet$} node [below=9pt] at (2,0) {$b$};
  \node (a8) at (2,.5) {$\circ$};
  \node (9) at (3,0) {$\bullet$} node [below=9pt] at (3,0) {$a$};
  \node (a9) at (3,.5) {$\circ$};
  \node (10) at (4,0) {$\bullet$} node [below=9pt] at (4,0) {$b$};
  \node (a10) at (4,.5) {$\circ$};
  \node (11) at (5,0) {$\bullet$} node [below=9pt] at (5,0) {$a$};
  \node (a11) at (5,.5) {$\circ$};
  \node (12) at (6,0) {$\bullet$} node [below=9pt] at (6,0) {$b$};
  \node (a12) at (6,.5) {$\circ$};
  \node (13) at (7,0) {$\bullet$} node [below=9pt] at (7,0) {$a$};
  \node (a13) at (7,.5) {$\circ$};
  \node (14) at (8,0) {$\bullet$} node [below=9pt] at (8,0) {$b$};
  \node (a14) at (8,.5) {$\circ$};
  \node (15) at (9,0) {$\bullet$} node [below=9pt] at (9,0) {$a$};
  \node (a15) at (9,.5) {$\circ$};
  \node (16) at (9.5,0) {};

  \path[-] 
		 (0) edge node [below=2pt] {$ t_1$} (1)
		 (1) edge node [below=2pt] {$t_1 \ t_3$} (2)
		 (2) edge node [below=2pt] {$t_3 \ t_2$} (3)
		 (3) edge node [below=2pt] {$t_2 \ t_3$} (4)
		 (4) edge node [below=2pt] {$t_3 \ t_1$} (5)
		 (5) edge node [below=2pt] {$t_1 \ t_3$} (6)
		 (6)  edge node [below=2pt] {$t_3 \ t_1$} (7)
		 (7)  edge node [below=2pt] {$t_1 \ t_3$} (8)
		 (8)  edge node [below=2pt] {$t_3 \ t_2$} (9)
		 (9)  edge node [below=2pt] {$t_2 \ t_3$} (10)
		 (10)  edge node [below=2pt] {$t_3 \ t_1$} (11) 
		 (11)  edge node [below=2pt] {$t_1 \ t_3$} (12) 
		 (12)  edge node [below=2pt] {$t_3 \ t_2$} (13) 
		 (13)  edge node [below=2pt] {$t_2 \ t_3$} (14) 
		 (14)  edge node [below=2pt] {$t_3 \ t_1$} (15) 
		 (15)  edge node [below=2pt] {$t_1$} (16) 
 		 (1) edge node [right=3pt] {$s_1$} (a1)    
 		 (2) edge node [right=3pt] {$s_3$} (a2)  
 		 (3) edge node [right=3pt] {$s_2$} (a3)    
 		 (4) edge node [right=3pt] {$s_3$} (a4)  
 		 (5) edge node [right=3pt] {$s_1$} (a5)    
 		 (6) edge node [right=3pt] {$s_3$} (a6)  
 		 (7) edge node [right=3pt] {$s_1$} (a7)    
 		 (8) edge node [right=3pt] {$s_3$} (a8)  
		 (9) edge node [right=3pt] {$s_2$} (a9)
 		 (10) edge node [right=3pt] {$s_3$} (a10)  
 		 (11) edge node [right=3pt] {$s_1$} (a11)    
 		 (12) edge node [right=3pt] {$s_3$} (a12)  
 		 (13) edge node [right=3pt] {$s_2$} (a13)  
 		 (14) edge node [right=3pt] {$s_3$} (a14)  
 		 (15) edge node [right=3pt] {$s_1$} (a15)    
		 (b0) edge (b1)    
		 (b1) edge (b3)    
 		 (b3) edge (b5)    
 		 (b5) edge (b7)    
 		 (b7) edge (b9)    
		 (b9) edge (b11)
 		 (b11) edge (b13)    
 		 (b13) edge (b15) 
 		 (b15) edge (b16);
\end{tikzpicture}
\end{center}
Let $s_i, t_i$, $i = 1,2,3$ be integers satisfying $t_i \ge 1$, $s_i \ge 0$, $s_i + 2 t_i = k$ for each $i = 1,2,3$ and $s_1 \ne s_2$.
In $X$ each vertex is colored by $a$ and $b$ in the alternating way.
Each vertex colored by $b$ have two directed edges indexed by $t_3$ and a loop indexed by $s_3$.
Each vertex colored by $a$ corresponds either to a vertex colored by $c$ or $d$ in $Y$.
If it corresponds to a vertex colored by $c$ (respectively $d$), put indices $s_1$ and $t_1$ 
(respectively $s_2$ and $t_2$) to the neighboring edges in $X$.
By this association we have a Sturmian coloring on $T$ from a Sturmian coloring on $Y$.
By a proof similar to the previous example, it is a Sturmian coloring of $T$.
\end{example}

Finally, we have an example in which neither vertex coloring nor edge index is periodic.

\begin{example}
Let $\phi_0$ be a Sturmian coloring on $Y$ in which $a-b-a$ and $b-b-b$ are not admissible.
We can construct a linear graph with loops $X$ as follows:

\medskip
\begin{center}
\begin{tikzpicture}[every loop/.style={}]
  \tikzstyle{every node}=[inner sep=0pt]
  \node (0) at (-6,0) {$\cdots$};
  \node (1) at (-5,0) {$\bullet$} node [below=8pt] at (-5,0) {$a$};
  \node (a1) at (-5,.5) {$\circ$};
  \node (2) at (-4,0) {$\bullet$} node [below=8pt] at (-4,0) {$b$};
  \node (a2) at (-4,.5) {$\circ$};
  \node (3) at (-3,0) {$\bullet$} node [below=8pt] at (-3,0) {$b$};
  \node (a3) at (-3,.5) {$\circ$};
  \node (4) at (-2,0) {$\bullet$} node [below=8pt] at (-2,0) {$a$};
  \node (a4) at (-2,.5) {$\circ$};
  \node (5) at (-1,0) {$\bullet$} node [below=8pt] at (-1,0) {$a$};
  \node (a5) at (-1,.5) {$\circ$};
  \node (6) at (0,0) {$\bullet$} node [below=8pt] at (0,0) {$b$};
  \node (a6) at (0,.5) {$\circ$};
  \node (7) at (1,0) {$\bullet$} node [below=8pt] at (1,0) {$b$};
  \node (a7) at (1,.5) {$\circ$};
  \node (8) at (2,0) {$\bullet$} node [below=8pt] at (2,0) {$a$};
  \node (a8) at (2,.5) {$\circ$};
  \node (9) at (3,0) {$\bullet$} node [below=8pt] at (3,0) {$b$};
  \node (a9) at (3,.5) {$\circ$};
  \node (10) at (4,0) {$\bullet$} node [below=8pt] at (4,0) {$b$};
  \node (a10) at (4,.5) {$\circ$};
  \node (11) at (5,0) {$\cdots$};
  \path[-] 
		 (0) edge node [below=2pt] {$\ \ \ t_1$} (1)
		 (1) edge node [below=2pt] {$t_1 \ \ t_1$} (2)
		 (2) edge node [below=2pt] {$t_2 \ \ t_2$} (3)
		 (3) edge node [below=2pt] {$t_1 \ \ t_1$} (4)
		 (4) edge node [below=2pt] {$t_1 \ \ t_1$} (5)
		 (5) edge node [below=2pt] {$t_1 \ \ t_1$} (6)
		 (6) edge node [below=2pt] {$t_2 \ \ t_2$} (7)
		 (7) edge node [below=2pt] {$t_1 \ \ t_1$}  (8)
		 (8) edge node [below=2pt] {$t_1 \ \ t_1$}  (9)
		 (9) edge node [below=2pt] {$t_2 \ \ t_2$} (10)
		 (10) edge node [below=2pt] {$t_1 \ \ \ $} (11) 
 		 (1) edge node [right=3pt] {$s_1$} (a1)    
 		 (2) edge node [right=3pt] {$s_2$} (a2)    
 		 (3) edge node [right=3pt] {$s_2$} (a3)    
 		 (4) edge node [right=3pt] {$s_1$} (a4)
  		 (5) edge node [right=3pt] {$s_1$} (a5)
 		 (6) edge node [right=3pt] {$s_2$} (a6)    
 		 (7) edge node [right=3pt] {$s_2$} (a7)    
		 (8) edge node [right=3pt] {$s_1$} (a8) 
 		 (9) edge node [right=3pt] {$s_2$} (a9)    
 		 (10) edge node [right=3pt] {$s_2$} (a10)    ;
\end{tikzpicture}
\end{center}
Let $s_i, t_i$, $i = 1,2$ be integers satisfying $t_i \ge 1$, $s_i \ge 0$ for each $i = 1,2$ and $ 2 t_1 + s_1= k$, $t_1 +t_2 + s_2 =k$ and $s_1 \ne s_2$
(The case of $s_1 = s_2$ corresponds to Example~\ref{ex:unbounded1}).
In $X$, each vertex is colored by $a$ and $b$ according to $\phi_0$.
All vertices colored by $a$ have directed edges indexed by $t_1$  to the vertices of other class and $s_1$ to the vertices of same class. Vertices colored by $b$ have directed edges indexed by $t_1$ and $t_2$ to the vertices of different class colored by $a$ and $b$ respectively and $s_2$ to the vertices of same class.
Each vertex in $Y$ colored by $a$ and $b$ is associated to a vertex in $T$ colored by $a$ and $b$ respectively.
By this association we have a Sturmian coloring on $T$ from a Sturmian coloring on $Y$, in which there is no 1-ball of $a-b-a$ and $b-b-b$.
\end{example}

\end{document}